\renewcommand{\[}{\begin{equation}\begin{aligned}}
\renewcommand{\]}{\end{aligned} \end{equation}}
\newcommand{\ddbar}{\sqrt{-1}\partial \bar\partial }
\newtheorem{thm}{Theorem}
\newtheorem{prop}[thm]{Proposition}
\newtheorem{lemma}[thm]{Lemma}
\newtheorem{cor}[thm]{Corollary}
\newtheorem{conj}[thm]{Conjecture}
\newcommand{\tRic}{\widetilde{\mathrm{Ric}}}
\newcommand{\ssubset}{\subset\joinrel\subset}
\theoremstyle{remark}
\newtheorem{remark}[thm]{Remark}
\theoremstyle{definition}
\newtheorem{definition}[thm]{Definition}
\author{G\'abor Sz\'ekelyhidi}
\address{Department of Mathematics, Northwestern University, Evanston,
  IL, USA}
\email{gaborsz@northwestern.edu}
\title{Singular K\"ahler-Einstein metrics and RCD spaces}
\date{}
\begin{document}

\begin{abstract}
  We study K\"ahler-Einstein metrics on singular projective
  varieties. We show that under an approximation property with
  constant scalar curvature metrics, the metric completion of the
  smooth part is a non-collapsed RCD space, and is homeomorphic to
  the original variety. 
\end{abstract}

\maketitle

\section{Introduction}
A basic idea in complex geometry is to study complex manifolds using
canonical K\"ahler metrics, of which perhaps the most important
examples are K\"ahler-Einstein
metrics. Yau's solution of the Calabi conjecture~\cite{Yau78} provides
K\"ahler-Einstein metrics on compact K\"ahler manifolds with negative
or zero first Chern class, while Chen-Donaldson-Sun's solution of the
Yau-Tian-Donaldson conjecture~\cite{CDS3} shows that a Fano manifold
admits a K\"ahler-Einstein metric if and only if it is K-stable. An
example of a geometric application of such metrics is Yau's
proof~\cite{Yau77} of the Miyaoka-Yau inequality.

Recently there has been increasing interest in K\"ahler-Einstein
metrics on singular varieties. In particular Yau's theorem was
extended to the singular case by
Eyssidieux-Guedj-Zeriahi~\cite{EGZ}, while the singular case of the
Yau-Tian-Donaldson conjecture was finally resolved by
Liu-Xu-Zhuang~\cite{LXZ22} after many partial results (see for instance
\cite{LTW21}). There is now a substantial literature on
singular K\"ahler-Einstein metrics, see e.g. \cite{BBEGZ,Ber16, GGK19,LTW21, GPSS2}. 

In order to state our main results, suppose that $X$ is an
$n$-dimensional normal compact K\"ahler
space. Let us recall that a singular
K\"ahler-Einstein metric on $X$ can be defined to be a positive current
$\omega_{KE}$ that is a smooth K\"ahler metric on the regular set $X^{reg}$, has locally
bounded potentials, and satisfies the equation $\mathrm{Ric}(\omega_{KE}) =
\lambda \omega_{KE}$ on $X^{reg}$ for a constant $\lambda\in
\mathbb{R}$. The metric $\omega_{KE}$ defines a length metric $d_{KE}$
on $X^{reg}$, and an important problem is to understand the geometry of
the metric completion $\overline{(X^{reg}, d_{KE})}$.

In recent remarkable works,
Guo-Phong-Song-Sturm~\cite{GPSS1,GPSS2} showed that this metric
completion satisfies many important geometric estimates, such as bounds for their
diameters, their heat kernels, as well as Sobolev inequalities, even
under far more general assumptions than the Einstein condition. In
particular, their results do not assume Ricci curvature bounds. It is
natural to expect, however, that singular K\"ahler-Einstein metrics satisfy sharper
results, similar to Riemannian manifolds with Ricci lower bounds. We formulate the
following conjecture, which is likely folklore among experts, although
we did not find it stated in the literature in this generality.

\begin{conj}\label{conj:RCD}
  The metric completion $\overline{(X^{reg}, d_{KE})}$, equipped with
  the measure $\omega_{KE}^n$, extended trivially from $X^{reg}$, is a
  non-collapsed $RCD(\lambda, 2n)$-space, homeomorphic to $X$.
\end{conj}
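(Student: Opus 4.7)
The plan is to establish Conjecture \ref{conj:RCD} via an approximation-and-limit strategy, exploiting the stability of the $RCD$ condition under measured Gromov-Hausdorff convergence. The starting data, provided by the approximation hypothesis, is a sequence of cscK metrics $\omega_j$ on smooth models $Y_j$ of $X$ (e.g.\ resolutions or smoothings) in K\"ahler classes converging to $[\omega_{KE}]$, which converge smoothly on compact subsets of $X^{reg}$ to $\omega_{KE}$. The first step is to combine this smooth convergence with the uniform bounds on diameter, volume, Sobolev constants, and heat kernel established by Guo-Phong-Song-Sturm \cite{GPSS1,GPSS2}, and thereby extract a subsequential measured Gromov-Hausdorff limit of $(Y_j, d_{\omega_j}, \omega_j^n)$.

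The second step is to obtain a synthetic Ricci lower bound on the approximants. Since cscK metrics only control scalar curvature, I would run a short-time K\"ahler-Ricci flow $\omega_j(t_j)$ starting from $\omega_j$, for a parameter $t_j \to 0$ chosen via Perelman-type estimates so that the resulting smooth Riemannian metrics on $Y_j$ satisfy $\mathrm{Ric} \geq \lambda - \epsilon_j$ with $\epsilon_j \to 0$, while still converging smoothly to $\omega_{KE}$ on $X^{reg}$. Each perturbed $(Y_j, d_{\omega_j(t_j)}, \omega_j(t_j)^n)$ is then an honest $RCD(\lambda - \epsilon_j, 2n)$-space, and the stability of the $RCD$ condition combined with De Philippis-Gigli volume convergence produces a non-collapsed $RCD(\lambda, 2n)$ mGH limit.

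To identify this limit with $\overline{(X^{reg}, d_{KE})}$ equipped with the extended $\omega_{KE}^n$, one uses that smooth convergence on $X^{reg}$ provides an isometric embedding $(X^{reg}, d_{KE}) \hookrightarrow \mathrm{lim}$, with the limit measure agreeing with $\omega_{KE}^n$ on the image. That the limit singular set carries no mass follows from uniform $L^\infty$-bounds on the K\"ahler potentials and capacity estimates from pluripotential theory; density of the image of $X^{reg}$ in the limit follows from the fact that the complement of $X^{reg}$ in $Y_j$ has quantitatively small $d_{\omega_j}$-diameter neighborhoods, a consequence of the Sobolev inequality of \cite{GPSS1}.

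The main obstacle, and the deepest ingredient, is the homeomorphism with $X$. My plan is to adapt the Donaldson-Sun strategy to this setting: analyze metric tangent cones at singular points of the limit, show via H\"ormander $L^2$-estimates and a partial $C^0$-estimate that these cones carry a compatible algebraic structure, and produce enough holomorphic sections of line bundles on the limit to define a projective embedding that can be matched with an embedding of $X$. This is delicate because partial $C^0$-estimates and Bergman kernel asymptotics are not yet routine in the singular cscK setting; it is precisely here that the cscK approximation hypothesis plays its essential role, by allowing one to transport Bergman kernel analysis from the smooth approximants $Y_j$ to the limit and then match the resulting analytic structure with the algebraic structure of $X$ via a rigidity argument on the regular set.
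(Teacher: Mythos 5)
Your proposal has a genuine gap at its central step, the short-time K\"ahler-Ricci flow regularization. You claim that running the K\"ahler-Ricci flow from the cscK metrics $\omega_\epsilon$ for a short time $t_\epsilon \to 0$, chosen via ``Perelman-type estimates,'' produces metrics with $\mathrm{Ric} \geq \lambda - \epsilon_j$. This is unjustified and almost certainly false. The cscK metrics $\omega_\epsilon$ on the resolution $Y$ have constant scalar curvature, but near the exceptional divisor their Ricci and Riemannian curvatures are typically unbounded in $L^\infty$ (the available control in Proposition~\ref{prop:cscKbounds} is only $L^1$/$L^2$ integral bounds). There is no known smoothing result for K\"ahler-Ricci flow that converts such weak integral curvature control into a pointwise Ricci lower bound after a short time; indeed, if this step worked, the metric completion would be a noncollapsed Ricci limit space, which is precisely the case the paper sets out to avoid assuming. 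The paper states explicitly that the cscK approximants ``do not have uniform Ricci curvature bounds from below,'' which is why it cannot appeal to stability of $RCD$ under mGH convergence and must instead prove the $RCD$ property intrinsically.

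The paper's actual route is quite different. It verifies the $RCD$ property for $(\hat{X}, d_{\hat{X}}, \omega_{KE}^n)$ directly through Honda's characterization for almost smooth metric measure spaces (Corollary~\ref{cor:Honda}): the complement of $X^{reg}$ has zero capacity, the Sobolev-to-Lipschitz and $L^2$-compactness properties hold (the latter via the GPSS Sobolev inequality), $\mathrm{Ric}(\omega_{KE}) = \lambda \omega_{KE}$ on $X^{reg}$ is known, and the one nontrivial condition is that $W^{1,2}$-eigenfunctions of the Laplacian are Lipschitz. That last condition is proved in Proposition~\ref{prop:RCD} by running the \emph{linear} heat equation for functions on $(Y,\omega_\epsilon)$, combining the Bochner term $\int \mathrm{Ric}(\nabla f_{t-s}, \nabla f_{t-s})\rho_s$ with a compensating quantity $\int \psi^2 f_{t-s}^2 \rho_s$ built from $\psi^2 = (1+|\widetilde{\mathrm{Ric}}|^2)^{1/2}$, and absorbing the error terms using the $L^1$/$L^2$ bounds of Proposition~\ref{prop:cscKbounds} together with the uniform heat kernel upper bounds of GPSS for times $s\geq s_0$. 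The estimate only holds for $t > t_\epsilon$, but passing $\epsilon\to 0$ followed by $s_0\to 0$ gives the gradient bound on $X^{reg}$. On the homeomorphism front your Donaldson-Sun sketch is in the right spirit, but it misses the key new ingredient: a priori one does not know the metric regular set of $\hat{X}$ coincides with $X^{reg}$, and the paper resolves this by showing the local defining functions of the algebraic singular set, viewed as bounded harmonic functions on the $RCD$ space, have finite order of vanishing (Lemma~\ref{lem:finitevanish}), and then exploiting a three-annulus argument (Lemma~\ref{lem:3ann}) to control their zero sets in almost-Euclidean balls, yielding Proposition~\ref{prop:almostreg31}.
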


The notion of non-collapsed RCD-space is due to De
Philippis-Gigli~\cite{DPG}, building on many previous works on
synthetic notions of Ricci curvature lower bounds (see
\cite{SturmKT,LV09,AGS}). 
The conjecture is already known
in several special cases, where in fact $\overline{(X^{reg}, d_{KE})}$
is shown to be a non-collapsed Ricci limit space -- these are
non-collapsed Gromov-Hausdorff limits of Riemannian manifolds with
Ricci lower bounds, studied by Cheeger-Colding~\cite{CC1}. Settings
where $\overline{(X^{reg}, d_{KE})}$ is a Ricci limit space are given,
for example, 
by K-stable Fano manifolds with admissible singularities (see
Li-Tian-Wang~\cite{LTW21}, or Song~\cite{Song14} for the case of
crepant singularities), or smoothable K-stable Fano varieties, see
Donaldson-Sun~\cite{DS1}, Spotti~\cite{Spotti}. 

Our goal in this paper is to move beyond the setting of Ricci limit
spaces, and to prove the conjecture in situations where it is not
clear whether the singular
K\"ahler-Einstein space $(X, \omega_{KE})$ can be approximated by
smooth, or mildly singular, spaces with Ricci curvature bounded below. Instead, our
approach is to use an approximation with constant scalar curvature
K\"ahler metrics. The main approximation property that we
require is the following.

\begin{definition}\label{defn:cscKapprox}
  We say that the singular K\"ahler-Einstein space $(X, \omega_{KE})$
  can be
  \emph{approximated by cscK metrics}, if there
  is a resolution $\pi: Y\to X$, and a family of constant scalar
  curvature K\"ahler metrics $\omega_\epsilon$ on $Y$ satisfying the following:
  \begin{enumerate}
    \item[(a)] We have $\omega_\epsilon = \eta_\epsilon + \ddbar
      u_\epsilon$, where $\eta_\epsilon$ converge smoothly to
      $\pi^*\eta_X$ and $\eta_\epsilon \geq \pi^*\eta_X$.
      Here $\eta_X\in [\omega_{KE}]$ is a smooth metric
      on $X$ in the sense that it is locally the restriction of smooth
      metrics under local embeddings into Euclidean space.  
    \item[(b)] We have the estimates
      \[ \sup_Y |u_\epsilon| < C, \quad
        \frac{\omega_\epsilon^n}{\eta_Y^n} > \gamma, \quad \int_Y 
        \left(\frac{\omega_\epsilon^n}{\eta_Y^n}\right)^p\
        \eta_Y^n < C, \]
      for constants $C > 0, p > 1$ independent of $\epsilon$, where $\eta_Y$ is
      a fixed K\"ahler metric on $Y$, and $\gamma$ is a non-negative
      continuous function on $Y$ vanishing only along the exceptional
      divisor, also independent of $\epsilon$. 
    \item[(c)] The metrics $\omega_\epsilon$ converge locally smoothly on
      $\pi^{-1}(X^{reg})$ to $\pi^*\omega_{KE}$. 
  \end{enumerate}
\end{definition}

The cscK property of the approximating metrics $\omega_\epsilon$ is
used to obtain integral bounds for the Ricci and Riemannian curvatures
as in Proposition~\ref{prop:cscKbounds}. We expect that such an approximation is possible in
all cases of interest, however at the moment this is only known in
limited settings. We have the following result.

\begin{thm}\label{thm:cscKapprox}
  Suppose that $(X, \omega_{KE})$ is a singular K\"ahler-Einstein
  space with $\omega_{KE}\in c_1(L)$ for a line bundle over $X$, and
  such that $X$ has discrete automorphism group. Assume that $X$
  admits a projective resolution $\pi: Y\to X$ for which the 
  anticanonical bundle $-K_Y$ is relatively nef over $X$. Then $(X, \omega_{KE})$
  can be  approximated by cscK metrics in the sense of the definition
  above. 
\end{thm}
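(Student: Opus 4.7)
The plan is to construct $\omega_\epsilon$ as constant scalar curvature K\"ahler metrics in a family of K\"ahler classes $[\eta_\epsilon]$ on $Y$ converging to $\pi^*[\omega_{KE}]$, and then extract the three required conditions from the cscK equation together with standard pluripotential theory. Fix a K\"ahler metric $\omega_Y$ on $Y$, and set
\[ [\eta_\epsilon] = \pi^*[\omega_{KE}] + \epsilon[\omega_Y]. \]
For $\epsilon > 0$ this is a K\"ahler class, and the natural smooth representative $\eta_\epsilon = \pi^*\eta_X + \epsilon\omega_Y$ converges smoothly to $\pi^*\eta_X$ with $\eta_\epsilon \geq \pi^*\eta_X$, as required in (a). The $\pi$-nefness of $-K_Y$ guarantees that the topological average scalar curvature
\[ \bar S_\epsilon = \frac{n(-K_Y) \cdot [\eta_\epsilon]^{n-1}}{[\eta_\epsilon]^n} \]
stays bounded and converges to $n\lambda$ as $\epsilon \to 0$, so the cscK equation in $[\eta_\epsilon]$ is a natural fourth-order approximation of the K\"ahler-Einstein equation on $X$.

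To produce $\omega_\epsilon \in [\eta_\epsilon]$ the plan is to invoke the Chen-Cheng theorem, which reduces existence to properness of the Mabuchi $K$-energy. Since $\mathrm{Aut}(X)$ is discrete, so is $\mathrm{Aut}(Y)$, and no automorphism quotient is needed. The existence of $\omega_{KE}$ on $X$ should yield properness of the relevant energy on $(X, [\omega_{KE}])$ through the theorem of Darvas-Rubinstein and its singular extensions. The hypothesis that $-K_Y$ is $\pi$-nef is then used to transfer this to $(Y, [\eta_\epsilon])$ with a modulus uniform in $\epsilon$: heuristically, a destabilizer on $Y$ would have to descend to a destabilizer on $X$ up to an error controlled by $\epsilon$ times an Aubin-Yau $J$-type quantity, contradicting the existence of $\omega_{KE}$.

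Given existence, condition (a) holds by construction. For (b), the uniform $L^\infty$-bound on $u_\epsilon$ follows from the uniform Mabuchi-properness via the Chen-Cheng a priori estimates (alternatively via the cscK estimates of Guo-Phong-Song-Sturm). The lower bound $\omega_\epsilon^n/\eta_Y^n \geq \gamma$ with $\gamma$ vanishing only on the exceptional divisor is obtained by applying the maximum principle to the quantity $\log(\omega_\epsilon^n/\eta_Y^n) - A u_\epsilon$ for $A$ large, where the $\pi$-nefness of $-K_Y$ bounds the Ricci contribution that appears on the right-hand side. The $L^p$-integrability of the density is a consequence of the uniform entropy bound built into the Mabuchi control. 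For (c), local smooth convergence on $\pi^{-1}(X^{reg})$ follows by standard Krylov-Evans and Schauder bootstrapping once one has the $L^\infty$-bound on $u_\epsilon$, the smooth convergence of $\eta_\epsilon$, and decouples the cscK system into a Monge-Amp\`ere equation coupled to a linear scalar curvature equation.

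The main obstacle is establishing uniform properness of the Mabuchi energy on $(Y, [\eta_\epsilon])$ with a modulus independent of $\epsilon$. Comparing energy functionals across the resolution $\pi$ is delicate because both the reference forms and the background measures degenerate in the limit, and it is precisely here that the $\pi$-nefness of $-K_Y$ must be used in an essential way to control the discrepancy. Once this uniform properness is in hand, the remaining steps follow established patterns in the cscK existence theory and in the pluripotential-theoretic analysis of degenerating families of complex Monge-Amp\`ere equations.
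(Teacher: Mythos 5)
Your setup matches the paper's: take $[\eta_\epsilon] = \pi^*[\omega_{KE}] + \epsilon[\omega_Y]$, aim for cscK metrics via Chen–Cheng, and invoke Zheng/Chen–Cheng-type a priori estimates plus properness of the Mabuchi energy. You also correctly identify the crux: \emph{uniform} properness of $M_{\eta_\epsilon}$ on $(Y,[\eta_\epsilon])$, with a modulus independent of $\epsilon$. But what you offer for that step is a heuristic ("a destabilizer on $Y$ would descend to $X$ up to $\epsilon \cdot J$-type error"), and this is precisely where your argument has a genuine gap: the paper does \emph{not} in fact establish uniform properness of $M_{\eta_\epsilon}$ on all of $PSH_{\eta_\epsilon}(Y) \cap L^\infty$. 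There is no direct way to compare $M_{\eta_\epsilon}$ to $M_\omega$ across the resolution for \emph{arbitrary} competitors, because both the reference forms and the densities degenerate on the exceptional divisor.

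What the paper does instead is a three-part workaround, and all three parts are missing from your sketch. First, the relative nefness of $-K_Y$ is used only to show (Lemma~\ref{lem:KYnefproper}) that the \emph{twisted} energy $M_{\eta_\epsilon,s}$, with twist form $s\eta_\epsilon - \mathrm{Ric}(\Omega)$ for $s \geq s_0$, is proper uniformly in $\epsilon$; this gives existence of twisted cscK metrics for large $s$ by Chen–Cheng, not of the cscK metric you want. Second, Zheng's a priori estimates (Proposition~\ref{prop:Zheng}), which are uniform in the degenerating class, show that any solution of the twisted equation with bounded entropy lies in a fixed "$\{p,a_j\}$-bounded" class of potentials, with weighted control of the volume ratio and Laplacian near $E$. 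Third — and this is the key step you do not have — Proposition~\ref{prop:smallepsproper} establishes properness of $M_{\eta_\epsilon}$ (hence of $M_{\eta_\epsilon,s}$ for $s \geq 0$) \emph{only} on that bounded class, for $\epsilon$ small, by a compactness/contradiction argument: a sequence of $\{p,a_j\}$-bounded competitors violating properness would converge in $L^1$ and locally in $C^{3,\alpha}$ to a potential on $X$, and the entropy and $\mathcal{J}$-functionals are shown to pass to the limit, contradicting properness of $M_\omega$ on $X$ (Proposition~\ref{prop:Mproper}). A continuity method in $s$ from $s_0$ down to $0$, with the entropy bound supplied by this restricted properness and the a priori estimates from Zheng keeping the path inside the bounded class, then produces the cscK metric at $s=0$. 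Without the restriction to bounded potentials, the compactness argument that underpins the properness estimate is unavailable, and without the twisted continuity path, there is no way to get started. Your items (b) and (c) then also rely implicitly on this machinery: the entropy bound, the weighted lower bound on the volume ratio, and the local higher-order bounds away from $E$ all come out of Zheng's estimates along the continuity path, not from a bare application of the maximum principle or Krylov–Evans to the degenerate family.
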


Note that recently Boucksom-Jonsson-Trusiani~\cite{BJT24} showed the 
existence of cscK metrics on resolutions in this setting (and even
more generally), while Pan-T{\^o}~\cite{PT24} showed estimates for
these approximating cscK metrics closely related to those in
Definition~\ref{defn:cscKapprox}, in a more general setting. 

Our main result on K\"ahler-Einstein spaces that can be approximated
by cscK metrics is the following.
\begin{thm}\label{thm:main}
  Suppose that $(X, \omega_{KE})$ can be approximated by cscK metrics,
  and $\omega_{KE}\in
  c_1(L)$ for a line bundle $L$ on $X$. Then Conjecture~\ref{conj:RCD} holds for
  $\overline{(X^{reg}, d_{KE})}$. In addition the metric singular set of
  $\overline{(X^{reg}, d_{KE})}$ agrees with the complex
  analytic singular set $X\setminus X^{reg}$, and it has Hausdorff
  codimension at least 4. 
\end{thm}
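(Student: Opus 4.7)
The strategy is to take a pointed Gromov-Hausdorff limit of the cscK approximations $(Y,\omega_\epsilon)$ as $\epsilon\to 0$, identify this limit with $\overline{(X^{reg}, d_{KE})}$, and then install on it a non-collapsed $RCD(\lambda,2n)$ structure by stability. The uniform diameter, Sobolev and non-collapsing bounds coming from hypothesis (b) and the GPSS estimates yield a pointed Gromov-Hausdorff subsequential limit $(Z, d_Z)$ carrying the weak-$\ast$ limit of the measures $\omega_\epsilon^n$. Hypothesis (c), together with the $L^p$ bound on the Monge-Amp\`ere density and the positivity of $\gamma$ off the exceptional divisor, identifies an open dense subset of $Z$ isometrically with $(X^{reg}, d_{KE})$ and recognizes the limit measure as $\omega_{KE}^n$, so $Z = \overline{(X^{reg}, d_{KE})}$ as a metric measure space.

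For the $RCD$ property I would use the integral curvature bounds of Proposition~\ref{prop:cscKbounds}, which exploit the cscK condition to control the negative part of $\mathrm{Ric}(\omega_\epsilon) - \lambda \omega_\epsilon$ in $L^p$. By a smoothing or covering argument these give, outside a set $B_\epsilon$ of small measure, a pointwise Ricci lower bound $\lambda - \delta_\epsilon$ with $\delta_\epsilon \to 0$. Combining this with De Philippis-Gigli stability and the Bakry-\'Emery/Cheeger-energy characterization of $RCD$, in such a way that the bad set $B_\epsilon$ contributes nothing in the limit, one concludes that $Z$ is a non-collapsed $RCD(\lambda,2n)$ space.

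The codimension $4$ bound on the singular stratum then follows from the $\epsilon$-regularity theorem for non-collapsed $RCD$ spaces together with the exclusion of codimension $2$ and $3$ strata: since $Z$ is a Gromov-Hausdorff limit of K\"ahler manifolds, its tangent cones at generic almost-regular points inherit a compatible complex structure in the spirit of Cheeger-Colding-Tian, which rules out these lower codimensions. Finally, the homeomorphism $Z \cong X$ is obtained by a Donaldson-Sun type argument: uniform H\"ormander $L^2$-estimates on $(Y,\omega_\epsilon)$ produce Kodaira embeddings via sections of $L^{\otimes k}$ into a fixed projective space, whose limit yields a continuous map $Z \to X$ inverse to the natural identification $X^{reg} \to Z \setminus \mathcal{S}$, and comparison of metric tangent cones with algebraic tangent cones at singular points upgrades this to a homeomorphism.

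The main obstacle is upgrading the integral curvature control into a genuine $RCD(\lambda, 2n)$ bound on the limit. cscK metrics do not satisfy pointwise Ricci lower bounds, and since $\gamma$ is allowed to degenerate along the exceptional divisor, the bad set $B_\epsilon$ where the effective Ricci lower bound fails can concentrate dangerously there. The argument must show that this concentration does not destroy the limit $RCD$ structure, which requires the sharp GPSS Sobolev inequality and the $L^p$ Monge-Amp\`ere bound in hypothesis (b) in order to absorb the contribution of $B_\epsilon$ to the Bochner and Cheeger-energy inequalities.
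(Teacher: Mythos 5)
Your proposed route to the $RCD(\lambda,2n)$ property does not go through as stated, and it diverges substantially from the paper. You propose to produce a bad set $B_\epsilon$ of small measure outside which $\mathrm{Ric}(\omega_\epsilon) \geq (\lambda - \delta_\epsilon)\omega_\epsilon$, and then to "absorb" $B_\epsilon$ in the limit and appeal to De Philippis--Gigli stability. But the stability theorem for $RCD(K,N)$ under measured Gromov--Hausdorff convergence requires each approximating space to actually be an $RCD(K,N)$ space; there is no version that tolerates a small-measure exceptional set where the curvature bound fails, and in fact the integral curvature smallness from Proposition~\ref{prop:cscKbounds} is too weak to control the Cheeger energy or the $\Gamma_2$ operator on $B_\epsilon$. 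This is not a technical gap to be filled by "the sharp GPSS Sobolev inequality" --- there is no known mechanism to promote $L^p$-small negative Ricci on the approximants to a synthetic lower bound on the limit. The paper instead sidesteps GH stability entirely: it invokes Honda's Bakry--\'Emery characterization (Corollary~\ref{cor:Honda}) applied directly to the fixed almost-smooth space $(\hat{X}, d_{\hat X}, \omega_{KE}^n)$, which reduces the $RCD$ property to showing that $W^{1,2}$-eigenfunctions on $\hat X$ are Lipschitz. That gradient bound is then proved in Proposition~\ref{prop:RCD} by running the heat flow on $(Y,\omega_\epsilon)$, using the Bochner formula together with the GPSS heat-kernel upper bound and the $L^1/L^2$ curvature bounds to get an estimate that kicks in only after time $t_\epsilon\to 0$, and passing to the limit as $\epsilon\to 0$. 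Your proposal also implicitly assumes that the GH limit of $(Y,\omega_\epsilon)$ is $\hat X$ with the right measure; this identification itself requires the kind of Lipschitz/heat-kernel control the paper is trying to establish, and the paper avoids the issue by never taking such a limit.

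The homeomorphism and codimension arguments also have a gap you have not addressed. You write that tangent cones inherit a complex structure "in the spirit of Cheeger--Colding--Tian," but that argument uses crucially that the singular space is a limit of manifolds with Ricci bounded below, so that the metric regular set and the analytic regular set coincide a priori. In the cscK-approximation setting that identification is exactly what is not known in advance: the $\omega_\epsilon$ have no lower Ricci bound, so it is conceivable that the algebraic singular set $X\setminus X^{reg}$ is metrically dense at some scale and that an almost-Euclidean ball in $\hat X$ still contains singular points. The paper's new ingredient is precisely a way around this: the singular set is cut out by holomorphic (hence harmonic) functions $s_k$ which, by the $L^p$ bound on the Monge--Amp\`ere density, have a finite order of vanishing (Lemma~\ref{lem:finitevanish}); a three-annulus argument (Lemma~\ref{lem:3ann}, Lemma~\ref{lem:growthbound30}) then controls their growth on almost-Euclidean balls and forces such balls to lie in $X^{reg}$ (Proposition~\ref{prop:almostreg1}, Proposition~\ref{prop:almostreg31}). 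Only after that identification is in place does the Donaldson--Sun/H\"ormander machinery you sketch apply. Without it, the construction of peaked sections and the codimension-$4$ bound are unjustified.
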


It is natural to expect that Conjecture~\ref{conj:RCD} can also be
extended to the setting of singular K\"ahler-Einstein metrics $\omega$
with cone singularities along a divisor on klt pairs $(X,D)$. In this case
one can hope to approximate $\omega$ using cscK metrics with cone
singularities on a log resolution of $(X,D)$. Some results in this direction
were obtained recently by Zheng~\cite{Zheng}, but we leave this
extension of Theorem~\ref{thm:main} for future
work.

The RCD property
implies important geometric information about the metric completion
$\overline{(X^{reg}, d_{KE})}$, such as the existence of tangent
cones (see De Philippis-Gigli~\cite{DPG2}).
Moreover, we expect that with only minor modifications the work of
Donaldson-Sun~\cite{DS2} and Li-Wang-Xu~\cite{LWX} on the tangent cones of smoothable
K\"ahler-Einstein spaces can be extended to the setting of
Theorem~\ref{thm:main}, i.e. the tangent cones of $\overline{(X^{reg},
  d_{KE})}$ are unique, and are determined by the algebraic
structure. Knowledge of the tangent cones can then be further
leveraged to obtain more refined information about the metric, such as
in Hein-Sun~\cite{HS}, or \cite{CSz23}.

Using results of Honda~\cite{Honda}, which rely on different equivalent
characterizations of RCD spaces by
Ambrosio-Gigli-Savar\'e~\cite{AGS}, the main estimate
that we need in order to prove the RCD property in
Theorem~\ref{thm:main}
is that eigenfunctions of the Laplacian are
Lipschitz continuous on $(X^{reg}, d_{KE})$. We will review Honda's
result in Section~\ref{sec:background}. In order to prove a gradient
estimate for eigenfunctions, we use the approximating smooth cscK spaces $(Y,
\omega_\epsilon)$. Note that these do not satisfy uniform gradient
estimates, since they do not have uniform Ricci curvature bounds from below.
Instead we will prove a weaker
estimate on $(Y, \omega_\epsilon)$, expressed in terms of the heat
flow -- roughly speaking we obtain an estimate that is valid for times
$t > t_\epsilon > 0$ along the heat flow, where $t_\epsilon\to 0$ as
$\epsilon\to 0$. These estimates can be passed to the limit as
$\epsilon \to 0$ using the uniform estimates of
Guo-Phong-Song-Sturm~\cite{GPSS1, GPSS2} for
the heat kernels, and in the limit we obtain the required gradient
bound on $(X^{reg}, \omega_{KE})$. This is discussed in
Section~\ref{sec:singularKERCD}. 

In Section~\ref{sec:DS} we prove that $\overline{(X^{reg}, d_{KE})}$
is homeomorphic to $X$, and that the metric singular set has Hausdorff codimension at
least 4. Some results of this type
were shown by Song~\cite{Song14} and La
Nave-Tian-Zhang~\cite{LNTZ}, based on applying H\"ormander's
$L^2$-estimates, following Donaldson-Sun~\cite{DS1}.
The main new
difficulty in our setting is that a priori we do not have enough control
of how large the set $\overline{(X^{reg}, d_{KE})} \setminus X^{reg}$ is
in the metric sense. It was shown by Sturm~\cite{SturmPrivate} (see also
\cite{Song14}), that this set has capacity zero, which already plays an
important role in the RCD property. For the
approach of Donaldson-Sun~\cite{DS1} to apply, however, we need a slightly
stronger effective bound that can be applied uniformly at all scales. In
previous related results this type of estimate relied on showing
that the metric regular set in $\overline{(X^{reg}, d_{KE})}$
coincides with $X^{reg}$, but this is not clear in our
setting since our approximating Riemannian manifolds $(Y, \omega_\epsilon)$ do not have lower
Ricci bounds. 

The new ingredient that we exploit is that the algebraic singular set of $X$ is
locally cut out by holomorphic (and therefore harmonic) functions. We
show that these functions have finite order of vanishing along the
singular set, and therefore we can control the size of their zero
sets in any ball that is sufficiently close to a Euclidean ball, using
a three annulus lemma argument, 
somewhat similarly to \cite{CJ20}. This leads to the key result that
the metric and algebraic regular sets of $\overline{(X^{reg},
  d_{KE})}$ coincide. After this the proof
follows by now familiar lines from Donaldson-Sun~\cite{DS1} and other subsequent
works such as \cite{LSz1}. 

In Section~\ref{sec:cscKapprox} we prove
Theorem~\ref{thm:cscKapprox}. The proof is based primarily on
Chen-Cheng's existence theorem for cscK metrics~\cite{ChenCheng} together
with some extensions of their estimates by Zheng~\cite{Zheng20}.  A
similar result, in more general settings, was obtained recently by
Boucksom-Jonsson-Trusiani~\cite{BJT24} and
Pan-T\^o~\cite{PT24}. 

In Section~\ref{sec:bounded}, as an example application,
we discuss an extension of Donaldson-Sun's partial $C^0$-estimate
 to singular K\"ahler-Einstein spaces
with the cscK approximation property. An additional ingredient that we
need is the gap result for the volume densities of (singular) Ricci flat K\"ahler
cone metrics that arise as tangent cones,
Theorem~\ref{thm:gap}. This was shown very recently in the more general
algebraic setting by Xu-Zhuang~\cite{XZ24}.

\subsection*{Acknowledgements}
I would like to thank Aaron Naber, Max Hallgren, Yuchen Liu, Tam\'as
Darvas, Valentino Tosatti, Jian Song, Yuji Odaka, Mattias Jonsson,
Sebastien Boucksom, and Antonio Trusiani for helpful discussions. In
addition I'm grateful to Chung-Ming Pan and Tat Dat T\^o for sharing
their preprint~\cite{PT24}. This work was supported in 
part by NSF grant DMS-2203218.

\section{Background}\label{sec:background}

\subsection{Non-collapsed RCD spaces}
By a metric measure space we mean a triple $(Z, d, \mathfrak{m})$,
where $(Z,d)$ is a metric space, and $\mathfrak{m}$ is a measure
on $Z$ with $\mathrm{supp}\,\mathfrak{m}=Z$. By now there are several
different, but essentially equivalent, notions of synthetic lower
bounds for the Ricci curvature of $(Z,d,\mathfrak{m})$, due to
Sturm~\cite{SturmKT}, Lott-Villani~\cite{LV09} and
Ambrosio-Gigli-Savar\'e~\cite{AGS}. We will be particularly
concerned with the notion of non-collapsed RCD($K,N$) space introduced
by De Philippis-Gigli~\cite{DPG}. These should be thought of as the
synthetic version of non-collapsed Gromov-Hausdorff limits of
$N$-dimensional manifolds with Ricci curvature bounded below by $K$.

More specifically we will be concerned with RCD spaces that are the metric
completions of smooth Riemannian manifolds. In fact the spaces that we
study almost fit into the setting of  {\em almost smooth metric measure
  spaces}, studied
by Honda~\cite{Honda}, except we will use the standard notion of
zero capacity set instead of \cite[Definition 3.1, 3(b)]{Honda}. The
results of \cite{Honda} hold with this definition too, as we will
outline below. Thus we state the following slight modification of
Honda's definition. 

\begin{definition} \label{defn:almostsmooth}
  A compact metric measure space $(Z, d, \mathfrak{m})$ is an
  $n$-dimensional almost smooth metric measure space, if there is an
  open subset $\Omega\subset Z$ satisfying the following conditions.
  \begin{itemize}
    \item[(1)] There is a smooth $n$-dimensional Riemannian manifold
      $(M,g)$ and a homeomorphism $\phi: \Omega \to M^n$, such that
      $\phi$ defines a local isometry between $(\Omega, d)$ and $(M^n,
      d_g)$.
    \item[(2)] The restriction of the measure $\mathfrak{m}$ to
      $\Omega$ coincides with the $n$-dimensional Hausdorff measure.
    \item[(3)] The complement $Z\setminus \Omega$ has measure zero,
      i.e. $\mathfrak{m}(Z\setminus \Omega)=0$, and it has zero
      capacity in the following sense: there is a sequence of smooth functions
      $\phi_i: \Omega\to [0,1]$ with compact support in $\Omega$ such that
      \begin{itemize}
        \item[(a)] For any compact $A\subset \Omega$ we have
          $\phi_i|_A=1$ for sufficiently large $i$,
        \item[(b)] We have
          \[ \lim_{i\to\infty} \int_\Omega |\nabla \phi_i|^2\,
            d\mathcal{H}^n = 0. \]
        \end{itemize}
    \end{itemize}
  \end{definition}

As a point of comparison we remark that in \cite{Honda}, the
condition (b) is replaced by requiring that the
$L^1$-norm of $\Delta \phi_i$ is uniformly bounded. Note that neither
of these conditions implies the other one. 

In our setting we will have an $n$-dimensional normal
projective variety $X$
equipped with a positive current $\omega$ that is a smooth K\"ahler
metric on
$X^{reg}$. In addition we will assume that $\omega$ has
locally bounded K\"ahler potentials. We use $\omega$ to define a
metric structure $d$ on the smooth locus
$X^{reg}$: 
\[ \label{eq:dKE} d(x,y) = \inf\{ \ell(\gamma)\, |\, \gamma\text{ is a smooth curve
    in } X^{reg} \text{ from } x \text{ to } y\}, \]
where $\ell(\gamma)$ denotes the length of $\gamma$ with respect to
$\omega$. We define $(\hat{X}, d_{\hat{X}})$ to be the metric completion of
$(X^{reg}, d)$, and we extend the volume form $\omega^n$ to
$\hat{X}$ trivially. In this way $(\hat{X}, d_{\hat{X}}, \omega^n)$ defines
a metric measure space. The complement of $X^{reg}$ has zero capacity,
by the following result, due to Sturm~\cite{SturmPrivate} (see also
Song~\cite[Lemma 3.7]{Song14}). 
\begin{lemma}\label{lem:cutoff}
  There is a sequence of smooth functions $\phi_i : X^{reg}\to [0,1]$
  with compact support, such that we have: for any compact $A\subset
  X^{reg}$
  we have $\phi_i|_A = 1$ for sufficiently large $i$, and
  \[ \lim_{i\to\infty} \int_{X^{reg}} |\nabla\phi_i|^2\, \omega^n
    = 0. \]
\end{lemma}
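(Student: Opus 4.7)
The plan is to construct the cutoffs from a smooth quasi-plurisubharmonic function on $X^{reg}$ with logarithmic poles along $\Sigma:=X\setminus X^{reg}$, and to bound the Dirichlet energy via an integration by parts that converts it into a mixed Monge--Amp\`ere pairing gaining a factor $1/i$. Since $X$ is normal projective and $\Sigma$ is a proper analytic subvariety, I would first pick an ample line bundle $L$ on $X$, some power $m$, and global sections $s_1,\ldots,s_k\in H^0(X,L^{\otimes m})$ whose common zero locus is exactly $\Sigma$. Fixing a smooth Hermitian metric $h$ on $L$ with positive curvature, set $\psi:=\log\sum_j|s_j|^2_{h^m}$. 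Then $\psi$ is smooth on $X^{reg}$, bounded above, tends to $-\infty$ along $\Sigma$, and satisfies $\ddbar\psi\geq -\omega_0$ on $X^{reg}$ for some fixed smooth K\"ahler form $\omega_0$ on $X$, since $\ddbar\log\sum_j|s_j|^2$ is a pullback of a Fubini--Study form while the curvature of $h^m$ contributes only a smooth correction. Picking a standard profile $\eta\in C^\infty(\mathbb{R},[0,1])$ with $\eta\equiv 0$ on $(-\infty,-2]$ and $\eta\equiv 1$ on $[-1,\infty)$, define $\chi_i:=\eta(\psi/i)$; then $\chi_i$ is smooth, supported in the compact set $\{\psi\geq -2i\}\ssubset X^{reg}$, and equals $1$ on any given compact $A\ssubset X^{reg}$ once $i$ is large enough, yielding the first assertion.

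For the Dirichlet energy, since $\omega$ is K\"ahler one has
\[ \int_{X^{reg}} |\nabla\chi_i|^2\,\omega^n = 2n\int_{X^{reg}} \sqrt{-1}\,\partial\chi_i\wedge\bar\partial\chi_i\wedge\omega^{n-1}. \]
Introducing the bounded primitive $H(t):=\int_{-\infty}^t|\eta'(s)|^2\,ds$ with $M:=\|H\|_\infty$, one computes $\partial H(\psi/i) = i^{-1}|\eta'(\psi/i)|^2\,\partial\psi$, so the integrand rewrites as $i^{-1}\sqrt{-1}\,\partial H(\psi/i)\wedge\bar\partial\psi\wedge\omega^{n-1}$. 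Since $H(\psi/i)$ also vanishes on $\{\psi\leq -2i\}$, it is compactly supported in $X^{reg}$, and Stokes' theorem on the smooth K\"ahler manifold $(X^{reg},\omega)$ gives
\[ \int_{X^{reg}} |\nabla\chi_i|^2\,\omega^n = -\frac{2n}{i}\int_{X^{reg}} H(\psi/i)\,\ddbar\psi\wedge\omega^{n-1}. \]

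To close the estimate I would invoke $\ddbar\psi\geq -\omega_0$ together with $0\leq H(\psi/i)\leq M$, which yields
\[ \int_{X^{reg}}|\nabla\chi_i|^2\,\omega^n \leq \frac{2nM}{i}\int_{X^{reg}}\omega_0\wedge\omega^{n-1}. \]
The last integral is finite because $\omega$ has locally bounded potentials: by Bedford--Taylor theory the mixed product $\omega_0\wedge\omega^{n-1}$ is a well-defined positive measure on $X$ that places no mass on the (locally pluripolar) set $\Sigma$, and its total mass equals the cohomological pairing $[\omega_0]\cdot[\omega]^{n-1}$, computed for instance on a resolution of $X$. The gained $1/i$ factor then forces the Dirichlet energy to zero as $i\to\infty$. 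The step I expect to be delicate is precisely this last one: verifying simultaneously that $H(\psi/i)$ is compactly supported away from $\Sigma$ (so that Stokes' theorem on the noncompact manifold $X^{reg}$ is legitimate), and that the mixed Monge--Amp\`ere mass $\int_X\omega_0\wedge\omega^{n-1}$ is finite and equal to the expected cohomological intersection, even though $\omega$ is only a positive current on $X$.
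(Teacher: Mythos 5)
Your construction is correct, and it is essentially the argument behind the paper's citation to Sturm and to Song~\cite[Lemma 3.7]{Song14} (the paper itself defers the proof to those references): take a quasi-psh exhaustion $\psi=\log\sum_j|s_j|^2_{h^m}$ with logarithmic poles exactly along $X\setminus X^{reg}$, set $\chi_i=\eta(\psi/i)$, and trade the $1/i^2$ from two derivatives for a single $1/i$ by integrating by parts against a bounded primitive of $|\eta'|^2$. The two points you flag as delicate are both fine: $H(\psi/i)$ is supported in $\{\psi\ge -2i\}$, which is closed in the compact $X$ and disjoint from $X\setminus X^{reg}$, hence compact in $X^{reg}$, so Stokes applies on the noncompact manifold $X^{reg}$; and the bound $\ddbar\psi\ge -\omega_0$ with $\omega_0$ a fixed smooth positive form (a multiple of the curvature of $h$) together with $0\le H\le M$ reduces everything to the finiteness of $\int_{X^{reg}}\omega_0\wedge\omega^{n-1}$. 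That finiteness holds because $\omega$ has bounded local potentials: passing to a resolution $\pi\colon Y\to X$, the Bedford--Taylor product $\pi^*\omega_0\wedge(\pi^*\omega)^{n-1}$ is a positive measure of total mass $[\pi^*\omega_0]\cdot[\pi^*\omega]^{n-1}$, and it charges no pluripolar set, in particular not the exceptional divisor, so the integral over $X^{reg}$ equals that cohomological intersection number. No gaps.

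Two very small remarks. First, it is worth noting explicitly that the common zero locus of the $s_j$ can be arranged to be exactly $X\setminus X^{reg}$ because $L$ is ample and the ideal sheaf of the singular locus is globally generated after twisting by a sufficiently large power. Second, the identity $|\nabla\chi_i|^2\,\omega^n = 2n\,\sqrt{-1}\partial\chi_i\wedge\bar\partial\chi_i\wedge\omega^{n-1}$ depends on a normalization convention for $|\nabla\cdot|$; since you only need the resulting constant to be a fixed dimensional constant, this does not affect the argument.
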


From this we have the following. 
\begin{lemma}
  $(\hat{X}, d_{\hat{X}}, \omega^n)$ defines a $2n$-dimensional almost
  smooth measure metric space in the sense of
  Definition~\ref{defn:almostsmooth}. 
\end{lemma}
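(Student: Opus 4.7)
The plan is to verify the three conditions of Definition~\ref{defn:almostsmooth} with the natural choices: take $\Omega = X^{reg}$ (regarded as a subset of $\hat{X}$ via the canonical isometric inclusion into the metric completion), let $(M,g)$ be the smooth Riemannian manifold underlying $(X^{reg}, \omega)$, and let $\phi$ be the identity map. With these choices the three conditions reduce to openness of $X^{reg}$ in $\hat{X}$, compatibility of the length metric with the Riemannian distance, and the measure/capacity conditions on the complement.

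The first step is to verify that $X^{reg}$ is open in $\hat{X}$. Given any $x \in X^{reg}$, smoothness of $\omega$ near $x$ and the (local) completeness of a small coordinate chart give an $r > 0$ such that the metric ball $B_d(x,r) \subset X^{reg}$ is relatively compact in $X^{reg}$. If $z \in \hat{X}$ satisfies $d_{\hat{X}}(x,z) < r$ and is represented by a Cauchy sequence $\{y_i\} \subset X^{reg}$, then $\{y_i\}$ eventually lies in $B_d(x,r)$, and relative compactness forces it to converge in $X^{reg}$, whence $z \in X^{reg}$. Thus $B_{d_{\hat{X}}}(x,r) \subset X^{reg}$, giving openness.

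For condition (1), by the construction of the metric completion, the restriction of $d_{\hat{X}}$ to $X^{reg}$ equals $d$, and by the definition in \eqref{eq:dKE} the metric $d$ is precisely the intrinsic length metric $d_g$ associated to the underlying Riemannian metric. Hence $\phi$ is in fact a global isometry and in particular a local isometry. For condition (2), the K\"ahler volume form satisfies $\omega^n = n!\,\mathrm{vol}_g$, and on a smooth $2n$-dimensional Riemannian manifold the Riemannian volume agrees with the Hausdorff measure $\mathcal{H}^{2n}$; thus $\omega^n|_{X^{reg}}$ coincides with $\mathcal{H}^{2n}$ up to the standard dimensional constant.

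Finally, condition (3) splits into two parts. The measure-zero statement $\omega^n(\hat{X}\setminus X^{reg}) = 0$ is immediate, since $\omega^n$ was extended trivially from $X^{reg}$. The zero capacity condition is exactly the content of Lemma~\ref{lem:cutoff}: the cutoff functions $\phi_i$ produced there are smooth and compactly supported in $X^{reg}$, exhaust any given compact subset $A \subset X^{reg}$, and have Dirichlet energy tending to zero; they therefore serve directly as the admissible test functions required by Definition~\ref{defn:almostsmooth}(3). The only genuinely substantive step is the openness of $X^{reg}$ in $\hat{X}$, handled by the compactness argument above; everything else is either definitional or a direct invocation of Lemma~\ref{lem:cutoff}.
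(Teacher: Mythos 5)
Your proposal is correct and takes essentially the same approach as the paper: take $\Omega = X^{reg}$, note that conditions (1) and (2) of Definition~\ref{defn:almostsmooth} hold essentially by construction, and invoke Lemma~\ref{lem:cutoff} for the zero-capacity condition. The paper dispenses with the openness of $X^{reg}$ in $\hat{X}$ and the identification $\omega^n = c_n\,\mathcal{H}^{2n}$ without comment, whereas you spell these out; that is extra rigor, not a different route.
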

\begin{proof}
  The open set $\Omega\subset \hat{X}$ is the smooth locus
  $X^{reg}$ viewed as a subset of its metric completion $\hat{X}$, equipped with
  the K\"ahler  metric $\omega$. The conditions (1) and
  (2) in Definition~\ref{defn:almostsmooth} are automatically
  satisfied.   The fact that $\hat{X}\setminus X^{reg}$ has capacity zero follows
  from the existence of good cutoff functions in Lemma~\ref{lem:cutoff}. 
\end{proof}

In order to show that $\hat{X}$ is an $RCD$ space, we will
use the characterization of $RCD$ spaces in
Honda~\cite[Corollary 3.10]{Honda} (see also
Ambrosio-Gigli-Savar\'e~\cite{AGS}). We state this Corollary here
in our setting. Note that our notion of almost smooth metric measure
space is slightly different from that in \cite{Honda}. 
\begin{cor}[See \cite{Honda}] \label{cor:Honda}
  The metric completion $(\hat{X}, d_{\hat{X}}, \omega^n)$ is an
  $RCD(K, 2n)$ space, where $K\in \mathbb{R}$, if 
   it is an almost smooth compact metric measure space
  associated with $X^{reg}$ in the sense of \cite[Definition
  3.1]{Honda}, and the following conditions hold:
  \begin{enumerate}
  \item The Sobolev to Lipschitz property holds, that is any $f\in
    W^{1,2}(\hat{X})$, with $|\nabla f|(x)\leq 1$ for
    $\omega^n$-almost every $x$, has a $1$-Lipschitz
    representative;
  \item The $L^2$-strong compactness condition holds, that is the
    inclusion $W^{1,2}(\hat{X})\hookrightarrow L^2(\hat{X})$ is a
    compact operator;
  \item Any $W^{1,2}$-eigenfunction of the Laplacian on $\hat{X}$ is Lipschitz;
  \item $\mathrm{Ric}(\omega) \geq K\omega$ on $X^{reg}$. 
  \end{enumerate}
\end{cor}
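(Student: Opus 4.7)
The plan is to follow Honda's proof of \cite[Corollary 3.10]{Honda}, and verify that our definition of almost smooth metric measure space (Definition~\ref{defn:almostsmooth}) is sufficient for the argument, despite using a zero-capacity condition in the Sobolev sense rather than Honda's $L^1$ bound on $\Delta \phi_i$. The overall strategy is to verify the Ambrosio-Gigli-Savar\'e characterization of $RCD(K, 2n)$ from \cite{AGS}: namely, that $(\hat X, d_{\hat X}, \omega^n)$ is infinitesimally Hilbertian and satisfies a weak form of the Bochner inequality $\tfrac12 \Delta |\nabla f|^2 \geq \langle \nabla f, \nabla \Delta f\rangle + K|\nabla f|^2$ on a sufficiently rich class of test functions.

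First I would identify the Cheeger energy on $\hat X$ with the Dirichlet energy $\int_{X^{reg}} |\nabla f|^2\,\omega^n$ coming from the smooth Riemannian structure on $\Omega = X^{reg}$. Given any $f \in W^{1,2}(\hat X)$, the products $\phi_i f$ have compact support in $X^{reg}$ and converge to $f$ in $W^{1,2}$, since $\int|\nabla \phi_i|^2\,\omega^n \to 0$ combined with Cauchy--Schwarz controls the gradient difference. This shows that compactly supported smooth functions on $X^{reg}$ are dense in $W^{1,2}(\hat X)$, which together with the Riemannian quadratic form structure yields infinitesimal Hilbertianity; the Sobolev-to-Lipschitz property (1) further ensures that the intrinsic pointwise slope agrees with the Riemannian $|\nabla f|$.

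To verify the weak Bochner inequality, I would use the spectral theory furnished by the compactness condition (2) together with the Lipschitz regularity of eigenfunctions (3) to produce a dense subalgebra of bounded Lipschitz test functions on which the intrinsic Laplacian is well-defined. On $X^{reg}$ the pointwise Bochner identity combined with the Ricci bound (4) gives the inequality for smooth functions, and by testing against a nonnegative smooth test function $\psi$ multiplied by a cutoff $\phi_i$ and integrating by parts on $X^{reg}$, one obtains the integrated form on $\hat X$. In the limit $i \to \infty$ the terms involving $\nabla \phi_i$ are controlled by $\|\nabla \phi_i\|_{L^2}$ times $L^2$-norms of gradients of the bounded Lipschitz factors, so they vanish.

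The main obstacle I anticipate is that Honda's original argument invokes his $L^1$ bound on $\Delta \phi_i$ to handle certain integration-by-parts steps. To circumvent this I would reorganize those computations so that each appearance of $\phi_i$ is differentiated only once, producing only terms of the form $\int \nabla \phi_i \cdot V\,\omega^n$ where $V$ is an $L^2$ vector field built from the test functions and their gradients. Such terms are then bounded by $\|\nabla \phi_i\|_{L^2}\|V\|_{L^2}$ and vanish in the limit by Definition~\ref{defn:almostsmooth}(3)(b). The remaining steps in Honda's proof use only the intrinsic metric measure structure and go through verbatim.
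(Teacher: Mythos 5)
Your proposal correctly identifies that the only place where the modified zero-capacity condition matters is in establishing Honda's Equation (3.13) (that $\mathrm{Hess}_{f_N} \in L^2$) in the proof of \cite[Theorem 3.7]{Honda}, and that the remedy is to reorganize the integration by parts so that $\phi_i$ is differentiated only once. This is exactly the paper's strategy. However, there is a genuine gap in the proposed mechanism: you claim the resulting boundary terms are of the form $\int \nabla\phi_i \cdot V$ with $V$ an $L^2$ vector field built from the test functions and their gradients, and then bounded by Cauchy--Schwarz. At the critical step this fails. Integrating by parts on $\frac12\int_\Omega |\nabla f_N|^2\, \Delta\phi_i^2$ produces the term $-2\int_\Omega |\nabla f_N|\,\phi_i\, \langle \nabla|\nabla f_N|, \nabla\phi_i\rangle$, whose vector-field factor contains $\nabla|\nabla f_N|$, i.e.\ the Hessian of $f_N$ — the very quantity whose $L^2$-membership is being proved. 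So $V$ is not a priori $L^2$ and the Cauchy--Schwarz estimate is circular.

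The paper's fix is to take $\phi_i^2$ (not $\phi_i$) as the cutoff, so that $\nabla\phi_i^2 = 2\phi_i\nabla\phi_i$ carries an explicit $\phi_i$-factor, and then apply Young's inequality
\[
-4|\nabla f_N|\,\phi_i\,\langle \nabla|\nabla f_N|, \nabla\phi_i\rangle \;\leq\; \phi_i^2\,|\mathrm{Hess}_{f_N}|^2 + 4|\nabla f_N|^2\,|\nabla\phi_i|^2,
\]
absorbing the $\phi_i^2|\mathrm{Hess}_{f_N}|^2$ contribution into the left-hand side of Honda's (3.12). Only then is the surviving term $4|\nabla f_N|^2|\nabla\phi_i|^2$ genuinely controlled by $\|\nabla f_N\|_{L^\infty}^2 \int_\Omega |\nabla\phi_i|^2 \to 0$, which uses both the $L^\infty$-bound on $|\nabla f_N|$ (from the Lipschitz eigenfunction hypothesis) and your Definition~\ref{defn:almostsmooth}(3)(b). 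Without the absorption step, the argument you outline does not close. Once that is supplied, the rest of Honda's proof indeed goes through verbatim; the elaborate preliminary discussion of Cheeger energy, infinitesimal Hilbertianity, and test-function algebras in your proposal is not needed, as the paper simply cites Honda for all of it.
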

In these conditions note that for almost smooth compact metric measure
spaces the Sobolev space $W^{1,2}(\hat{X})$ defined using the
Riemannian structure on $X^{reg}$
coincides with the $H^{1,2}(\hat{X}, d_{\hat{X}}, \omega^n)$-space defined
using the Cheeger energy (see \cite[Proposition 3.3]{Honda}).
\begin{proof}
  The only place where the difference between our notion of capacity
  zero in Definition~\ref{defn:almostsmooth} and Honda's notion plays
  a role is in the proof of \cite[Theorem
3.7]{Honda} to deduce Equation (3.13), stating that the Hessian of
$f_N$ is in $L^2$ (see \cite{Honda} for the meaning of $f_N$). 
We can also deduce this by using cutoff functions
that satisfy our Condition (3b) in Definition~\ref{defn:almostsmooth}.
To
simplify the notation we will write $\Omega = X^{reg}$. 
Let us recall Equation
(3.12) from \cite{Honda}, which in our notation states
\[ \label{eq:H10} \frac{1}{2}\int_{\Omega} |\nabla f_N|^2 \Delta \phi_i^2\,
  \omega^n \geq \int_{\Omega} \phi_i^2 \Big(
  |\mathrm{Hess}_{f_N}|^2 + \langle \nabla\Delta f_N, \nabla
  f_N\rangle + K|\nabla f_N|^2\Big)\, \omega^n, \]
where $\mathrm{Ric}(\omega) \geq K \omega$, and we used
$\phi_i^2$ as the cutoff function instead of $\phi_i$. Note that $0
\leq \phi_i^2 \leq 1$, and $\nabla \phi_i^2 = 2\phi_i \nabla\phi_i$,
so $\phi_i^2$ satisfies the same estimate as $\phi_i$. In addition
$f_N$ is a Lipschitz function such that $f_N, \Delta f_N \in
W^{1,2}$. We have
\[ \int_{\Omega} |\nabla f_N|^2 \Delta \phi_i^2\, \omega^n &=
  -\int_{\Omega} 4|\nabla f_N| \phi_i \langle\nabla |\nabla f_N|,
  \nabla\phi_i\rangle\, \omega^n \\
  &\leq \int_{\Omega} \left( \phi_i^2
    |\mathrm{Hess}_{f_N}|^2 + 4|\nabla f_N|^2 |\nabla\phi_i|^2\right)\, \omega^n.
\]
It follows using this in \eqref{eq:H10} that
\[ \int_{\Omega}  \frac{1}{2}\phi_i^2 |\mathrm{Hess}_{f_N}|^2 \,
  \omega^n &\leq \int_{\Omega} \Big( 2|\nabla f_N|^2
    |\nabla\phi_i|^2 - \phi_i^2 \langle \nabla\Delta f_N, \nabla
    f_N\rangle \\ &\qquad \qquad - \phi_i^2 K |\nabla f_N|^2\Big) \,
    \omega^n.  \]
Letting $i\to \infty$ and using that $|\nabla f_N| \in L^\infty$, we
obtain that
\[ \int_{\Omega} |\mathrm{Hess}_{f_N}|^2\, \omega^n < \infty. \]
The rest of the argument is the same as in \cite[Theorem
3.7]{Honda}. 
\end{proof}

Note that in our setting we have the following. In Section~\ref{sec:singularKERCD} 
we will show the remaining Condition (3) in the setting of
Theorem~\ref{thm:main}.

\begin{prop}\label{prop:RCD2}
  The metric measure space $(\hat{X}, d_{\hat{X}}, \omega^n)$ satisfies 
  Conditions (1), (2) and (4) in Corollary~\ref{cor:Honda}, for some
  $K\in \mathbb{R}$. 
\end{prop}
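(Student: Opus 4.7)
My plan is to verify conditions (1), (2), and (4) in turn, with (4) being immediate and the main work concentrated in (1) and (2). For (4), since $\omega_{KE}$ is a singular K\"ahler-Einstein metric we have $\mathrm{Ric}(\omega_{KE}) = \lambda \omega_{KE}$ on $X^{reg}$ by definition, so one may take $K = \lambda$.

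For the Sobolev-to-Lipschitz property (1), my approach exploits the fact that $d_{\hat{X}}$ is by construction the length metric induced by $\omega$ on $X^{reg}$. Given $f \in W^{1,2}(\hat{X})$ with $|\nabla f| \leq 1$ almost everywhere, I would restrict $f$ to the smooth part $X^{reg}$, use local coordinate charts to mollify it, and obtain smooth approximations $f_\delta$ whose gradient has $L^\infty$-norm at most $1 + o(1)$ on any fixed compactly contained subset. For any $x, y \in X^{reg}$ and any smooth curve $\gamma$ joining them in $X^{reg}$, integration along $\gamma$ then gives $|f_\delta(x) - f_\delta(y)| \leq (1+o(1))\ell(\gamma)$. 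Passing to the limit $\delta \to 0$ at Lebesgue points and taking the infimum over $\gamma$ produces a $1$-Lipschitz representative of $f$ on the full-measure set of Lebesgue points in $X^{reg}$, which extends uniquely by density to all of $\hat{X}$.

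For the $L^2$-compactness (2), I would invoke the Sobolev inequality on $(X^{reg}, \omega_{KE})$ established by Guo-Phong-Song-Sturm in \cite{GPSS1, GPSS2}, combined with the finiteness of volume and the diameter bound from the same work. These yield an embedding $W^{1,2}(\hat{X}) \hookrightarrow L^p(\hat{X})$ for some $p > 2$, so any bounded sequence $\{f_k\} \subset W^{1,2}(\hat{X})$ is uniformly bounded in $L^p$, and in particular the family $\{|f_k|^2\}$ is uniformly integrable. Applying the classical Rellich-Kondrachov theorem on a compact exhaustion of $X^{reg}$ together with a diagonal argument yields a subsequence converging $\omega^n$-almost everywhere, and since $\hat{X} \setminus X^{reg}$ has $\omega^n$-measure zero, uniform integrability upgrades a.e.\ convergence to strong $L^2$-convergence on $\hat{X}$.

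I expect (1) to be the most delicate step, since it requires relating the intrinsic metric of $\hat{X}$ to the calculus on $X^{reg}$; however, the curve-based definition of $d_{\hat{X}}$ and the zero-capacity complement of $X^{reg}$ (Lemma~\ref{lem:cutoff}) make the mollification argument essentially routine in this setting.
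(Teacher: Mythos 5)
Your proposal is correct and follows essentially the same route as the paper: condition (4) holds by definition, condition (1) is established by observing that the Sobolev-to-Lipschitz property holds on the smooth Riemannian manifold $X^{reg}$ and then extending the $1$-Lipschitz representative to the metric completion via the length-metric definition of $d_{\hat{X}}$, and condition (2) follows from the Sobolev inequality of Guo-Phong-Song-Sturm. You simply spell out the mollification and Rellich-Kondrachov/diagonal steps that the paper leaves implicit.
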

\begin{proof}
  Condition (4) is satisfied by definition. To verify Condition
  (1), let $f\in W^{1,2}(\hat{X})$, such that $|\nabla f|(x) \leq 1$ for
  $\omega^n$-almost every $x$. On $X^{reg}$ the Sobolev to
  Lipschitz property holds, so we can assume that $f$ is 1-Lipschitz
  on $X^{reg}$. By the definition of the distance $d$, this implies
  that for any $x,y\in X^{reg}$ we have $|f(x) - f(y)| \leq |x-y|$. We
  can then extend $f$ uniquely to the completion $\hat{X}$ so that the same
  condition continues to hold.
  Condition (2) follows from the Sobolev
  inequality shown by Guo-Phong-Song-Sturm~\cite[Theorem
  2.1]{GPSS2}.
\end{proof}

Let us recall from De Philippis-Gigli~\cite{DPG} that an
$RCD(K,N)$-space $(Z, d, \mathfrak{m})$ is called non-collapsed, if the $N$-dimensional
Hausdorff measure on $(Z, d)$ agrees with $\mathfrak{m}$. In
particular, if an $n$-dimensional almost smooth metric measure space in
Definition~\ref{defn:almostsmooth} satisfies the $RCD(K, n)$-property,
then it is automatically non-collapsed.
Non-collapsed RCD spaces satisfy many of the properties enjoyed by
non-collapsed Ricci limits spaces studied by
Cheeger-Colding~\cite{CC1}. We will now recall some results that we will
use.

De Philippis-Gigli~\cite{DPG2} showed that in a non-collapsed $RCD(K,N)$-space
$(Z, d, \mathfrak{m})$, the tangent cones at every point $z\in Z$ are
metric cones. In \cite{DPG} they then showed that $Z$ admits a stratification
\[ S_0\subset S_1 \subset \ldots \subset S_{N-1} \subset Z, \]
where $S_k$ denotes the set of points $z\in Z$ where no tangent cone
splits off an isometric factor of $\mathbb{R}^{k+1}$, and the strata
satisfy the Hausdorff dimension estimate $\dim_{\mathcal{H}} S_k \leq
k$. Note that in contrast with the setting of non-collapsed Ricci
limit spaces, it is not necessarily the case that $S_{N-1} = S_{N-2}$,
since a non-collapsed $RCD$-space can have boundary. In our setting
however we have the following, which is a consequence of
Bru\`e-Naber-Semola~\cite[Theorem 1.2]{BNS}.

\begin{prop}\label{prop:nocodim1}
 Suppose that $(Z, d, \mathfrak{m})$ is a non-collapsed $RCD(K,
 N)$-space, and also an $N$-dimensional almost smooth metric measure
 space. Then $S_{N-1}=S_{N-2}$. Moreover any iterated tangent cone
 $Z'$ of $Z$ also satisfies $S_{N-1}= S_{N-2}$. 
\end{prop}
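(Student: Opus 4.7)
My plan is to reduce the claim to Bru\`e-Naber-Semola's Theorem~1.2 in \cite{BNS}, which in this setting asserts that a non-collapsed $RCD(K,N)$-space has $S_{N-1}=S_{N-2}$ (equivalently, empty ``boundary'' in their sense) if and only if it has at least one point admitting an open neighborhood disjoint from $S_{N-1}\setminus S_{N-2}$. A second ingredient from \cite{BNS} that I will use is that this property is preserved under pmGH-limits of non-collapsed $RCD(K,N)$-spaces.

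For the first statement, I would pick any $p$ in the open set $\Omega\subset Z$ from Definition~\ref{defn:almostsmooth}, together with a metric ball $B_r(p)\ssubset\Omega$. Since $\phi:\Omega\to M^N$ is a local isometry onto a smooth Riemannian manifold, every point in $B_r(p)$ has tangent cone isometric to $\mathbb{R}^N$, hence lies in the top stratum $Z\setminus S_{N-1}$. In particular $B_r(p)\cap(S_{N-1}\setminus S_{N-2})=\emptyset$, and the BNS local-to-global principle immediately yields $S_{N-1}=S_{N-2}$ on all of $Z$.

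For an iterated tangent cone $Z'$, the idea is to propagate ``no boundary'' along tangent cone limits. By De~Philippis-Gigli, each tangent cone of a non-collapsed $RCD(K,N)$-space is a non-collapsed $RCD(0,N)$ metric cone, and the same holds at each subsequent stage; thus the BNS framework remains available at every iteration. A tangent cone at $z\in Z$ is obtained as a pmGH-limit of the pointed rescalings $(Z, r_i^{-1}d, z)$, and each such rescaling is metrically isometric to $Z$ (up to scale) and so trivially inherits $S_{N-1}=S_{N-2}$. The stability part of \cite{BNS} then passes $S_{N-1}=S_{N-2}$ to the limit, and iterating this finitely many times handles any iterated tangent cone.

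The main obstacle, absent the \cite{BNS} machinery, would be this iterated tangent cone part. Unlike $Z$ itself, an iterated tangent cone need not carry any almost smooth structure -- typically its tip is singular, and there is no reason for a dense open smooth Riemannian subset to exist -- so the simple argument from the second paragraph breaks down. The substantive work is therefore entirely done by the stability of the no-boundary condition under pmGH-convergence proved in \cite{BNS}, and the only step requiring care is checking that this stability applies to the sequences of rescalings defining iterated tangent cones, which is immediate from the fact that each non-collapsed $RCD(0,N)$ metric cone appearing in the iteration satisfies the hypotheses of the BNS stability result.
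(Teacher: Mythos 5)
The proposal has a genuine gap. You have misremembered \cite[Theorem 1.2]{BNS}: it is \emph{not} true that a non-collapsed $RCD(K,N)$-space has $\partial Z=\emptyset$ as soon as one point has an open neighborhood disjoint from $S_{N-1}\setminus S_{N-2}$. The half-space $\mathbb{R}^{N-1}\times[0,\infty)$ is a non-collapsed $RCD(0,N)$ space with nonempty boundary $\mathbb{R}^{N-1}\times\{0\}$, yet any interior point admits such a neighborhood. So the ``local-to-global principle'' you invoke in the second paragraph is false, and picking a single $p\in\Omega$ with a small ball inside $\Omega$ proves nothing.

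What \cite[Theorem 1.2(i)]{BNS} actually gives, and what the paper's proof exploits, is measure-theoretic: if $\partial Z\neq\emptyset$, then $\partial Z$ has locally positive $\mathcal{H}^{N-1}$-measure. The argument then hinges on condition (3) of Definition~\ref{defn:almostsmooth}, namely that $Z\setminus\Omega$ has \emph{zero capacity}. Since every point of $\Omega$ has tangent cone $\mathbb{R}^N$, one has $\partial Z\subset Z\setminus\Omega$, so $\partial Z$ has zero capacity and hence cannot carry positive $\mathcal{H}^{N-1}$-measure; therefore $\partial Z=\emptyset$. Your proposal never uses the capacity-zero hypothesis at all, but this is precisely what rules out the a priori possibility that $Z\setminus\Omega$ (and so $\partial Z$) is metrically large. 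The treatment of iterated tangent cones via BNS stability of the no-boundary condition under pmGH-limits is in the right spirit and matches the paper's use of \cite[Theorem 1.2(i)]{BNS} for that step, but without the capacity argument you have no base case to propagate.
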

\begin{proof}
  Using the notation of \cite{BNS} we define $\partial Z =
  \overline{S_{N-1}\setminus S_{N-2}}$ to be the boundary of
  $Z$. Let $\Omega\subset Z$ denote the smooth Riemannian manifold in
  the definition of almost smooth metric measure space. For $z\in
  \Omega$ the tangent cones are all $\mathbb{R}^N$, so $\partial
  Z\subset Z\setminus \Omega$. In particular $\partial Z$ has capacity
  zero. Using \cite[Theorem 1.2(i)]{BNS} this implies that we must
  have $\partial Z = \emptyset$. If an iterated tangent cone $Z'$
  satisfied $\partial Z' \not= \emptyset$, then by \cite[Theorem
  1.2(i)]{BNS} we would have $\partial Z\not=\emptyset$, which
  is a contradiction as above. 
\end{proof}

We will be working with harmonic functions on RCD spaces, so we review
some basic results. Let us suppose that $(Z, d, \mathfrak{m})$ is a
non-collapsed $RCD(K,N)$-space that is also an $N$-dimensional almost
smooth metric measure space. A function $f : U \to \mathbb{R}$ on an
open set $U\subset Z$ is defined to be harmonic if $f\in
W^{1,2}_{loc}(U)$, and for any Lipschitz function
$\psi:U\to\mathbb{R}$ with compact support we have
\[ \int_U \nabla f\cdot \nabla \psi\, d\mathfrak{m} = 0. \]
Note that in our setting the integration can be taken over $U\cap
\Omega$, where $\Omega\subset Z$ is the dense open set in
Definition~\ref{defn:almostsmooth} since $Z\setminus \Omega$ has
measure zero. We will use the following result several times.

\begin{lemma} \label{lem:bddharmonic}
  Let $u: U \to \mathbb{R}$ for an open set $U\subset Z$, such that
  $u\in L^\infty(U)$. Suppose that $\Delta u = 0$ on $U\cap \Omega$,
  using the smooth Riemannian structure on $\Omega$. Then $u$ is
  harmonic on $U$.
\end{lemma}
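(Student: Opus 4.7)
The plan is to exploit the capacity-zero cutoff functions $\phi_i$ from Definition~\ref{defn:almostsmooth}(3b) to reduce the problem to the smooth harmonicity of $u$ on $\Omega = X^{reg}$. Since $\mathfrak{m}(Z\setminus \Omega) = 0$, integrals over $U$ and over $U\cap \Omega$ agree, so the only real task is to verify the two defining properties of harmonicity: that $u\in W^{1,2}_{loc}(U)$, and that $\int \nabla u \cdot \nabla \psi\, d\mathfrak{m}=0$ for every Lipschitz test function $\psi$ with compact support in $U$.

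First I would establish the Caccioppoli bound $u\in W^{1,2}_{loc}(U)$. Fix a Lipschitz cutoff $\chi$ with compact support in $U$, and on $\Omega$ multiply $\Delta u = 0$ by $\chi^2 \phi_i^2 u$ and integrate by parts (this is legal smoothly because $\phi_i$ has compact support in $\Omega$). Expanding gives
\[
\int_\Omega \chi^2 \phi_i^2 |\nabla u|^2\, \omega^n \leq 2\int_\Omega |u|\chi\phi_i^2 |\nabla\chi||\nabla u|\,\omega^n + 2\int_\Omega |u|\chi^2\phi_i |\nabla\phi_i||\nabla u|\, \omega^n,
\]
and two applications of Cauchy--Schwarz absorb the $|\nabla u|^2$ terms, leaving
\[
\int_\Omega \chi^2 \phi_i^2 |\nabla u|^2\, \omega^n \leq C\|u\|_\infty^2 \left( \int_\Omega |\nabla\chi|^2\, \omega^n + \int_\Omega |\nabla\phi_i|^2\,\omega^n\right).
\]
Letting $i\to \infty$, the last term vanishes by Definition~\ref{defn:almostsmooth}(3b), and Fatou (using $\phi_i\to 1$ on compact subsets of $\Omega$) yields $\int_\Omega \chi^2 |\nabla u|^2\, \omega^n \leq C\|u\|_\infty^2 \int_\Omega |\nabla\chi|^2\,\omega^n$. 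This proves $u\in W^{1,2}_{loc}(U)$.

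Next I would verify the weak harmonicity. Given a Lipschitz $\psi$ with compact support in $U$, the product $\phi_i\psi$ has compact support in $U\cap \Omega$. Since $\Delta u = 0$ smoothly on $\Omega$, integration by parts gives
\[
0 = \int_{\Omega} \nabla u \cdot \nabla(\phi_i\psi)\, \omega^n = \int_{\Omega} \phi_i\, \nabla u\cdot\nabla\psi\, \omega^n + \int_{\Omega} \psi\, \nabla u \cdot \nabla \phi_i\, \omega^n.
\]
The first term converges to $\int_U \nabla u\cdot \nabla\psi\, d\mathfrak{m}$ by dominated convergence, using $\nabla u\in L^2_{loc}$ and $\nabla\psi\in L^\infty$ with compact support. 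The second term is bounded by $\|\psi\|_\infty \|\nabla u\|_{L^2(\mathrm{supp}\,\psi)} \|\nabla \phi_i\|_{L^2(\mathrm{supp}\,\psi)}$, which tends to zero since the first factor is now finite by the Caccioppoli estimate and the last factor tends to zero. This gives the required identity.

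The principal subtlety is the Caccioppoli step: $u$ is only known a priori to be bounded and smoothly harmonic on $\Omega$, so the integral $\int \chi^2 |\nabla u|^2$ is not obviously finite, and one must pick up the Dirichlet energy by cutting off with $\phi_i$ and then carefully absorbing the cross term generated by $\nabla\phi_i$, using in a crucial way that $\int |\nabla\phi_i|^2\to 0$. Once this energy bound is in hand, the passage to the limit in the weak formulation is straightforward.
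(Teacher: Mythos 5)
Your proof is correct and follows essentially the same route as the paper's: first a Caccioppoli estimate obtained by integrating $\Delta u=0$ against $u$ times a squared cutoff product and absorbing the cross terms, with the $\nabla\phi_i$ contribution killed by the capacity-zero condition, giving $u\in W^{1,2}_{loc}(U)$; then passing to the limit in $\int \phi_i \nabla u\cdot\nabla\psi$ using the same vanishing of $\|\nabla\phi_i\|_{L^2}$. The only differences from the paper are cosmetic — the paper uses $\phi_i^2$ rather than $\phi_i$ in the second step and reuses the test function $\psi$ as the spatial cutoff instead of introducing a separate $\chi$.
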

\begin{proof}
  Let $\phi_i$ be functions as in Condition (3) of
  Definition~\ref{defn:almostsmooth}, and $\psi$ a Lipschitz function
  with compact support in $U$. We have
  \[ \int_U \psi^2 \phi_i^2 |\nabla u|^2\, d\mathfrak{m} &= -2\int_U \psi^2
    \phi_i u 
    \nabla\phi_i\cdot \nabla u\, d\mathfrak{m} - 2\int_U \phi_i^2 \psi u \nabla u\cdot \nabla\psi\,
    d\mathfrak{m} \\
    &\leq \frac{1}{2}\int_U \psi^2 \phi_i^2 |\nabla u|^2\, d\mathfrak{m} +
    4\int_U \psi^2 u^2 |\nabla\phi_i|^2\, d\mathfrak{m} + C_\psi \int_U
    u^2\, d\mathfrak{m},
  \]
  where $C_\psi$ depends on $\sup_{U\cap \Omega} |\nabla\psi|$. 
  Letting $i\to\infty$, we obtain that $u\in W^{1,2}_{loc}(U)$.

  At the same time we have
  \[ \int_U \phi_i^2 \nabla u\cdot \nabla\psi\, d\mathfrak{m} &= -2\int_U
    \phi_i \psi \nabla\phi_i\cdot \nabla u \, d\mathfrak{m} \\
  &\leq \int_U |\nabla\phi_i|^2\, d\mathfrak{m} +
  \int_{\mathrm{supp}(\nabla\phi_i)} \psi^2 |\nabla
  u|^2\,d\mathfrak{m}.\]
  Letting $i\to\infty$ we get $\int_U\nabla u\cdot\nabla\psi = 0$, so
  $u$ is harmonic on $U$. 
\end{proof}

We will also need the following gradient estimate, generalizing
Cheng-Yau's gradient estimate.
\begin{prop}[Jiang~\cite{Jiang14}, Theorem 1.1]
  \label{prop:RCDgrad}
  Let $u$ be a harmonic function on a ball $B(p, 2R)$ in an $RCD(N,
  K)$-space. There is a constant $C=C(R,N,K)$ such that
  \[ \sup_{B(p,R)} |\nabla u| \leq C \fint_{B(p, 2R)}
    |u|\, d\mathfrak{m}. \]  
\end{prop}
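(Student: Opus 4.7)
The result is the Cheng-Yau gradient estimate transplanted to the synthetic setting, so the plan is to reproduce that proof while checking that each ingredient has an $RCD$ version. I would proceed in three main stages: a weak Bochner inequality for $|\nabla u|^2$, a Moser-type mean value inequality on subsolutions, and a Caccioppoli estimate together with a reverse $L^2$-to-$L^1$ step for harmonic functions.

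First, I would use the defining Bochner inequality of an $RCD(K,N)$-space. For any $f$ in a suitable algebra of test functions, one has the weak $\Gamma_2$-inequality
\[
\tfrac{1}{2}\Delta|\nabla f|^2 \;\geq\; K|\nabla f|^2 + \tfrac{1}{N}(\Delta f)^2 + \langle \nabla f,\nabla \Delta f\rangle
\]
in the sense of distributions tested against nonnegative Lipschitz functions of compact support. Applied to the harmonic $u$ this collapses to
\[
\tfrac{1}{2}\Delta|\nabla u|^2 \;\geq\; K\,|\nabla u|^2,
\]
so $w = |\nabla u|^2$ is, weakly, a subsolution of $\Delta w \geq 2Kw$. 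A small technical point is that a priori we only know $u \in W^{1,2}_{loc}$, so I would first improve to $u \in W^{2,2}_{loc} \cap \mathrm{Lip}_{loc}$ using the standard elliptic regularity for harmonic functions on $RCD$ spaces (via the self-improvement of the Bakry--Émery condition and local Lipschitz regularity of solutions with bounded Laplacian), which ensures $w$ is a bona fide subsolution.

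Next, I would run Moser iteration on $w$. The ingredients one needs---volume doubling and a local $(2,2)$-Poincaré inequality---are automatic on $RCD(K,N)$-spaces (Sturm--Lott--Villani), so the De Giorgi--Nash--Moser scheme applies verbatim to subsolutions of $\Delta w \geq 2Kw$ on balls. This gives a mean value inequality
\[
\sup_{B(p,R)} |\nabla u|^2 \;\leq\; C(R,N,K)\,\fint_{B(p, \frac{3R}{2})} |\nabla u|^2\, d\mathfrak{m}.
\]
Then I would prove a standard Caccioppoli estimate by testing the harmonic equation with $\eta^2 u$ for a cutoff $\eta$ (supported in $B(p,2R)$ and equal to $1$ on $B(p, 3R/2)$), obtaining
\[
\int_{B(p,\frac{3R}{2})} |\nabla u|^2 \,d\mathfrak{m} \;\leq\; C(R)\int_{B(p,2R)} u^2\, d\mathfrak{m}.
\]
Because harmonic functions are $L^2$-bounded by their $L^1$-means (apply Moser iteration to the subsolution $u^2$ of $\Delta(u^2) \geq 2K u^2$, or to $|u|$ directly via the weak Kato inequality), one gets
\[
\fint_{B(p,2R)} u^2\,d\mathfrak{m} \;\leq\; C(R,N,K)\Big(\fint_{B(p,2R)} |u|\, d\mathfrak{m}\Big)^2.
\]
Chaining these three inequalities yields the proposition.

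The main obstacle is the first stage: running Bochner on an abstract $RCD$ space requires the synthetic Hessian and $\Gamma_2$-calculus of Gigli, and one must verify that $|\nabla u|^2$ belongs to the correct test-function class before applying the weak Bochner inequality against a Lipschitz cutoff $\eta^2$. Once this is in place, the Moser iteration and Caccioppoli steps are standard for doubling metric measure spaces with a Poincaré inequality and use nothing beyond the general theory already developed for $RCD$ spaces.
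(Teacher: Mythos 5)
The paper itself does not prove this proposition; it is cited verbatim from Jiang~\cite{Jiang14} (Theorem 1.1), so there is no internal argument to compare against. Your sketch, however, reproduces the essence of how such local Lipschitz estimates are proved on $RCD(K,N)$ spaces: the weak Bochner inequality makes $|\nabla u|^2$ a subsolution, and De Giorgi--Nash--Moser iteration (available since $RCD(K,N)$ spaces are locally doubling and support a local $(2,2)$-Poincar\'e inequality) upgrades the subsolution property to a pointwise gradient bound. This is faithful to the strategy in Jiang's paper and in the broader literature.

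Two places need tightening. First, your justification of the regularity needed to run Bochner is circular as written: you invoke ``local Lipschitz regularity of solutions with bounded Laplacian'' to legitimize treating $|\nabla u|^2$ as a subsolution, but that Lipschitz regularity is exactly what the proposition is meant to establish. The correct input is the self-improvement of the Bakry--\'Emery condition (Savar\'e, and Gigli in the dimensional case), which shows directly that a function $f$ with $f,\Delta f\in W^{1,2}$ has $|\nabla f|^2\in W^{1,2}_{loc}$ and satisfies the weak Bochner inequality with the full Hessian term, without presupposing any Lipschitz bound; after localization this applies to a harmonic $u$. Second, the last link in your chain is stated on coinciding balls, $\fint_{B(p,2R)}u^2\leq C\bigl(\fint_{B(p,2R)}|u|\bigr)^2$, which Moser iteration does not give: the reverse $L^2$-to-$L^1$ inequality comes from $\fint_{B_r}u^2\leq\bigl(\sup_{B_r}|u|\bigr)\fint_{B_r}|u|$ combined with the $L^1$-to-$L^\infty$ Moser estimate applied to the subharmonic function $|u|$ (via Kato, $\Delta|u|\geq 0$), and this requires a strictly larger ball for the mean. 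So the Caccioppoli and reverse-H\"older steps need a strictly nested chain such as $B_{3R/2}\subset B_{7R/4}\subset B_{2R}$. Also, $\Delta(u^2)=2|\nabla u|^2\geq 0$; the inequality $\Delta(u^2)\geq 2Ku^2$ only follows when $K\leq 0$, but since the inequality $\Delta(u^2)\geq 0$ is what the iteration actually uses, this slip is harmless. With these repairs your argument is sound and is essentially Jiang's.
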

Note that a similar estimate holds for solutions of $\Delta u = c$ on
$U\subset Z$ for a
constant $c$, by considering $u - ct^2/2$ on the space $U\times
\mathbb{R}_t$.

\section{The RCD property of singular K\"ahler-Einstein spaces}
\label{sec:singularKERCD}
The main result in this section will be that the completion of the
K\"ahler-Einstein metric on $X^{reg}$ in Theorem~\ref{thm:main}
defines a non-collapsed RCD space. We will first need some estimates
for the cscK approximations of $(X,\omega_{KE})$. 

\subsection{Constant scalar curvature approximations}
Let $(X, \omega_{KE})$ be a singular K\"ahler-Einstein space,
where $\mathrm{Ric}_{\omega_{KE}} = \lambda\omega_{KE}$. Suppose that
$(X, \omega_{KE})$
can be approximated by cscK metrics as in
Definition~\ref{defn:cscKapprox}. In particular there is a resolution
$Y$ of $X$, that admits a family of cscK metrics $\omega_\epsilon$ in
suitable K\"ahler classes $[\eta_\epsilon]$, such that the
$\eta_\epsilon$ converge to $\pi^*\eta_X$. Here $\eta_X$ is a smooth
metric on $X$ in the sense that it is the restriction of a smooth
metric under local embeddings into $\mathbb{C}^N$. 

We will need the following, which is immediate from the work of
Guo-Phong-Song-Sturm~\cite[Theorem 2.2]{GPSS2}.
\begin{thm}\label{thm:Hupper}
  Let $H(x,y,t)$ denote the heat kernel on $(Y, \omega_\epsilon)$. There is a
  continuous function $\bar{H}:(0,2] \to \mathbb{R}$, depending on
  $(X,\omega_{KE})$, but independent of $\epsilon$, such that we have the
  upper bound
  \[ H(x,y,t) \leq \bar{H}(t), \quad \text{ for } x,y\in Y \text{ and } t\in
    (0,2]. \]
  Note that $\bar{H}(t)\to\infty$ as $t\to 0$. 
\end{thm}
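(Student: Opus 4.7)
The plan is to invoke the uniform heat kernel bounds of Guo-Phong-Song-Sturm (Theorem 2.2 of \cite{GPSS2}), which produce a pointwise upper bound $H(x,y,t) \le \bar{H}(t)$ on a K\"ahler manifold depending only on: (i) a uniform upper bound on the K\"ahler potential $u_\epsilon$, (ii) a uniform $L^p$ bound ($p>1$) on the volume ratio $\omega_\epsilon^n/\eta_Y^n$, and (iii) control on the background metric $\eta_\epsilon$ (convergence and a dominating class). The entire argument is then a check that the hypotheses of Definition~\ref{defn:cscKapprox} supply exactly these ingredients, uniformly in $\epsilon$.

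First I would fix the reference metric $\eta_Y$ on $Y$ and recall that by hypothesis (a) the metrics $\eta_\epsilon$ converge smoothly to $\pi^*\eta_X$. In particular, for $\epsilon$ small enough the cohomology classes $[\eta_\epsilon]$ and the total volumes $\int_Y \omega_\epsilon^n = \int_Y \eta_\epsilon^n$ are uniformly bounded, and $\eta_\epsilon$ is uniformly bounded above and below by $C\eta_Y$ away from the exceptional divisor, with a uniform bound from above on all of $Y$. Combined with the uniform $L^\infty$ bound on $u_\epsilon$ from hypothesis (b), this gives the setup in which \cite[Theorem 2.2]{GPSS2} is formulated.

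Next, I would verify the analytic hypotheses. The $L^p$ integrability of $\omega_\epsilon^n / \eta_Y^n$ in hypothesis (b) is precisely the entropy-type bound used by Guo-Phong-Song-Sturm to derive their uniform Sobolev inequality and auxiliary Green function estimates. Together with the uniform $L^\infty$ bound on $u_\epsilon$, their method then yields a heat kernel upper bound of the form $H(x,y,t) \le \bar{H}(t)$ where $\bar{H}$ depends on $n$, $p$, the constant $C$ in Definition~\ref{defn:cscKapprox}(b), and the limiting data $(X, \pi^*\eta_X)$, but is independent of $\epsilon$. Since their bound is continuous in $t\in(0,2]$ with the expected blowup as $t\to 0^+$, this is the claimed $\bar{H}$.

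There is no serious obstacle here, as the author already signals: the statement is a direct application once one reads off that the cscK approximation hypotheses match the input required by \cite[Theorem 2.2]{GPSS2}. The only mildly delicate point to double-check is that the $\epsilon$-independence of $C$, $p$, and $\gamma$ in Definition~\ref{defn:cscKapprox}(b), combined with the smooth convergence of $\eta_\epsilon$, genuinely gives $\epsilon$-independent constants in all intermediate steps of \cite{GPSS2} (Sobolev, diameter, $L^\infty$ Green function), so that the final function $\bar{H}(t)$ can indeed be chosen uniformly.
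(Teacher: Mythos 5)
Your proposal is essentially identical to the paper's approach: the paper gives no separate proof, stating only that the theorem is ``immediate from the work of Guo-Phong-Song-Sturm~\cite[Theorem 2.2]{GPSS2}'', and your write-up simply spells out the hypothesis-matching (uniform $\sup|u_\epsilon|$ bound, uniform $L^p$ bound on $\omega_\epsilon^n/\eta_Y^n$, convergence and boundedness of the classes $[\eta_\epsilon]$) that makes that citation applicable uniformly in $\epsilon$. This is the correct and intended argument.
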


In addition the constant scalar curvature metrics $\omega_\epsilon$ satisfy
the following integral bounds for their Ricci curvatures. We will use
these integral bounds as a replacement for having lower bounds for
the Ricci curvature, when we approximate $\omega_{KE}$ with
$\omega_\epsilon$. 
\begin{prop}\label{prop:cscKbounds}
  Let us define $\tRic_\omega = \mathrm{Ric}_\omega - \lambda\omega$.
  We have the following estimates: 
  \[ \lim_{\epsilon\to 0} \int_{Y} |\tRic_{\omega_\epsilon}|^2 +
    \frac{|\nabla\tRic_{\omega_\epsilon}|^2}{(1 + |\tRic_{\omega_\epsilon}|^2)^{1/2}} +
    |\Delta(1+|\tRic_{\omega_\epsilon}|^2)^{1/2}|\, \,\omega_\epsilon^n = 0, \]
  and
  \[\int_Y |\mathrm{Rm}_{\omega_\epsilon}|^2\, \omega_\epsilon^n <
  C, \]
  for $C$ independent of $\epsilon$. 
\end{prop}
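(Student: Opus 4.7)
The plan is to combine two main ingredients: Chern-Weil pointwise algebraic identities on K\"ahler manifolds, which express $L^2$-integrals of curvature tensors via topological intersection numbers, and a Bochner-Weitzenb\"ock formula exploiting the fact that the cscK condition makes $\tRic_{\omega_\epsilon}$ divergence-free. The key pointwise identity is
\[ n(n-1)\, \alpha \wedge \alpha \wedge \omega^{n-2} = (S_\alpha^2 - |\alpha|^2)\, \omega^n, \]
valid for any smooth real $(1,1)$-form $\alpha$ on a K\"ahler $n$-fold with $S_\alpha = \mathrm{tr}_\omega \alpha$, together with the analogous identity expressing $|\mathrm{Rm}|^2\omega^n$ through $c_2(Y) \cdot [\omega]^{n-2}$.

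Applied with $\alpha = \tRic_{\omega_\epsilon}$, integration gives
\[ \int_Y |\tRic_{\omega_\epsilon}|^2\, \omega_\epsilon^n = (S_\epsilon - n\lambda)^2 V_\epsilon - n(n-1)\, [\mathrm{Ric}_{\omega_\epsilon} - \lambda \omega_\epsilon]^2 \cdot [\omega_\epsilon]^{n-2}. \]
Since $[\omega_\epsilon] \to [\pi^*\omega_{KE}]$ in cohomology by Definition~\ref{defn:cscKapprox}(a), and $S_\epsilon \to n\lambda$ (from the Chern-Weil formula $S_\epsilon V_\epsilon = 2\pi n\, c_1(Y) \cdot [\omega_\epsilon]^{n-1}$ combined with the cohomological K\"ahler-Einstein relation between $c_1(X)$ and $[\omega_{KE}]$ together with the projection formula annihilating the discrepancy contribution), both right-hand sides converge. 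This yields the uniform bound $\int_Y |\mathrm{Rm}|^2 \omega_\epsilon^n < C$, as well as convergence of $\int_Y |\tRic|^2 \omega_\epsilon^n$. To identify this limit as zero, I will combine the topological computation with the pointwise convergence on $\pi^{-1}(X^{reg})$, where $\tRic_{\omega_{KE}} = 0$ and $\omega_\epsilon \to \pi^*\omega_{KE}$ smoothly; the uniform $L^p$ upper bound and the lower bound $\omega_\epsilon^n / \eta_Y^n \geq \gamma$ from Definition~\ref{defn:cscKapprox}(b) rule out mass concentration on the exceptional divisor.

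For the gradient and Laplacian terms, the cscK condition gives $\nabla^i \tilde R_{i\bar j} = \nabla_j S_\epsilon = 0$, so $\tRic_{\omega_\epsilon}$ is divergence-free. Combined with the second Bianchi identity this produces the Bochner-Weitzenb\"ock identity
\[ \tfrac{1}{2}\Delta |\tRic|^2 = |\nabla \tRic|^2 + Q(\mathrm{Rm}, \tRic), \]
where $Q$ is at worst quadratic in $\tRic$ and linear in $\mathrm{Rm}$. Setting $F_\epsilon = (1 + |\tRic|^2)^{1/2}$, Kato's inequality $|\nabla F_\epsilon|^2 \leq |\nabla \tRic|^2$ yields
\[ \Delta F_\epsilon = F_\epsilon^{-1} \bigl( |\nabla \tRic|^2 - |\nabla F_\epsilon|^2 + Q \bigr), \]
whose principal term is nonnegative. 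Integrating over the closed manifold $Y$ (where $\int_Y \Delta F_\epsilon \,\omega_\epsilon^n = 0$) and applying Cauchy-Schwarz with the already-established bounds $\int_Y |\mathrm{Rm}|^2 \leq C$ and $\int_Y |\tRic|^2 \to 0$ from the first step forces both $\int_Y |\nabla \tRic|^2 / F_\epsilon$ and $\int_Y |\Delta F_\epsilon|$ to vanish as $\epsilon \to 0$.

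The hard part will be the precise identification of the topological limit of $\int_Y |\tRic|^2 \omega_\epsilon^n$ as exactly zero. Chern-Weil produces a term proportional to $D^2 \cdot [\pi^*\omega_{KE}]^{n-2}$, with $D = K_Y - \pi^*K_X$ the discrepancy divisor; this is nonpositive by the Hodge-Riemann bilinear relations (since $D$ is primitive with respect to $[\pi^*\omega_{KE}]$ by the projection formula), but need not vanish in general. Forcing the equality by ruling out mass concentration on the exceptional divisor will require using both the uniform integrability estimates of Definition~\ref{defn:cscKapprox}(b) and the precise behavior of the cscK approximation near the singular K\"ahler-Einstein equation.
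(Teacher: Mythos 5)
Your Chern--Weil/Calabi computation for the $L^2$-bounds is essentially the same as the paper's, and you are right that it gives the uniform bound on $\int_Y |\mathrm{Rm}_{\omega_\epsilon}|^2\,\omega_\epsilon^n$ and convergence of $\int_Y |\tRic_{\omega_\epsilon}|^2\,\omega_\epsilon^n$ to a topological quantity proportional to $-[D]^2\cdot[\pi^*\omega_{KE}]^{n-2}$. The paper simply asserts this limit is zero, citing \eqref{eq:limR} and $2\pi c_1(X)=\lambda[\omega_{KE}]$, without spelling out why the discrepancy term you flag vanishes; your concern there is legitimate, and the paper's own proof does not provide the argument you call the ``hard part.''

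The genuine gap in your proposal is in the Bochner step: the plain Kato inequality $|\nabla F_\epsilon|\leq|\nabla\tRic|$ is not enough to deduce $\int_Y |\nabla\tRic|^2/F_\epsilon\to 0$. With $F=(1+|\tRic|^2)^{1/2}$, integrating your identity over the closed manifold $Y$ gives only
\begin{equation*}
  \int_Y \frac{|\nabla\tRic|^2-|\nabla F|^2}{F}\,\omega_\epsilon^n
  = -\int_Y \frac{\mathrm{Rm}\ast\tRic\ast\tRic}{2F}\,\omega_\epsilon^n
  \leq C\,\Vert\mathrm{Rm}\Vert_{L^2}\Vert\tRic\Vert_{L^2}\to 0,
\end{equation*}
and since $|\nabla\tRic|^2/F = (|\nabla\tRic|^2-|\nabla F|^2)/F + |\nabla F|^2/F$, you must still control $\int_Y |\nabla F|^2/F\,\omega_\epsilon^n$, which is not bounded by the quantity above without a uniform $\sup_Y|\tRic|$ bound that is unavailable here. (The weaker conclusion $\int_Y |\Delta F|\,\omega_\epsilon^n\to 0$ does follow from plain Kato, since $\Delta F$ is a nonnegative term plus a term whose $L^1$-norm vanishes; the term $\int_Y |\nabla\tRic|^2/F$ is the one that requires more.) The paper closes exactly this gap by invoking the \emph{refined} Kato inequality $|\nabla|\tRic||^2\leq\alpha_n|\nabla\tRic|^2$ with a dimensional constant $\alpha_n<1$, valid because for a cscK metric the form $\tRic$ is not only divergence-free but \emph{harmonic} (it is also closed), which places it in the kernel of a natural first-order operator and improves the Kato constant (Branson~\cite{Bran}, Calderbank--Gauduchon--Herzlich~\cite{CGH}, Cibotaru--Zhu~\cite{CZ}). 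This yields
\begin{equation*}
  \int_Y \frac{|\nabla\tRic|^2}{F}\,\omega_\epsilon^n
  \leq \frac{1}{1-\alpha_n}\int_Y \frac{|\nabla\tRic|^2-|\nabla|\tRic||^2}{F}\,\omega_\epsilon^n \to 0,
\end{equation*}
which is the step your argument as written cannot reach.
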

\begin{proof}
  First recall the well-known result of Calabi~\cite{Calabi}
  relating the $L^2$-norms of the scalar curvature, the Ricci and Riemannian
  curvature tensors of a K\"ahler
  metric. Let us denote by $R, \mathrm{Ric}, \mathrm{Rm}$ the scalar
  curvature, the Ricci form and the Riemannian curvature tensor. Since
  $R_{\omega_\epsilon}$ is constant, we have
  \[ R_{\omega_\epsilon} = \frac{ 2n \pi  c_1(Y) \cup
      [\omega_\epsilon]^{n-1}}{[\omega_\epsilon]^n}. \]
  Note that we have
  \[ \label{eq:limR} \lim_{\epsilon\to 0} \frac{ 2n \pi  c_1(Y) \cup
      [\omega_\epsilon]^{n-1}}{[\omega_\epsilon]^n} = \frac{2n\pi c_1(X) \cup
      [\omega_{KE}]^{n-1}}{[\omega_{KE}]^n} = n\lambda, \]
  since $[\omega_{KE}]^{n-1}$ vanishes when paired with the
  exceptional divisor of the map $Y\to X$. 
  In addition
  \[    \int_{Y} |\mathrm{Ric}_{\omega_\epsilon}|^2\,
    \omega_\epsilon^n &= R_{\omega_\epsilon}^2 
    [\omega_\epsilon]^n - 4n(n-1)\pi^2 c_1(Y)^2\cup [\omega_\epsilon]^{n-2}, \\
     \int_Y (|\mathrm{Ric}_{\omega_\epsilon}|^2 -
     |\mathrm{Rm}_{\omega_\epsilon}|^2)\, \omega_\epsilon^n &= n(n-1)\big( 4\pi^2
     c_1(Y)^2 - 8\pi^2 c_2(Y)\big) \cup [\omega_\epsilon]^{n-2}. \]
   Since the cohomology classes $[\omega_\epsilon]=[\eta_\epsilon]$ are uniformly
   bounded, and in addition $[\omega_\epsilon]^n \geq [\eta_X]^n > 0$, it
   follows that $R_{\omega_\epsilon}$, and the $L^2$ norms of
   $|\mathrm{Ric}_{\omega_\epsilon}|, |\mathrm{Rm}_{\omega_\epsilon}|$ are all
   uniformly bounded, independently of $\epsilon$.

   To see the first claim in the Proposition, note that
   \[ \int_Y |\mathrm{Ric}_{\omega_\epsilon} - \lambda \omega_\epsilon|^2\,
     \omega_\epsilon^n = (R_{\omega_\epsilon} - n\lambda)^2 [\omega_\epsilon]^n - n(n-1)
     \big( 2\pi c_1(Y) - \lambda[\omega_\epsilon]\big)^2\cup [\omega_\epsilon]^{n-2}. \]
   As $\epsilon\to 0$, this converges to zero by \eqref{eq:limR} and the
   fact that $2\pi c_1(X) = \lambda [\omega_{KE}]$.

   To estimate $\nabla\tRic_{\omega_\epsilon}$ and $\Delta \tRic_{\omega_\epsilon}$ note
   that we have the following equation satisfied by any constant
   scalar curvature metric:
   \[ \Delta |\tRic|^2 &= \nabla_k\nabla_{\bar k}
     (\tRic_{p\bar q} \tRic_{q\bar p}) \\
     &= 2|\nabla_k \tRic_{p\bar q}|^2 + \mathrm{Rm}\ast
     \tRic\ast \tRic, \]
   where $\ast$ denotes a tensorial contraction.  It follows that 
   \[ \label{eq:DtRic}
      \Delta (1 + |\tRic|^2)^{1/2} &= (1+|\tRic|^2)^{-1/2}\left(
       |\nabla\tRic|^2 - |\nabla|\tRic||^2 +
       \mathrm{Rm}\ast\tRic\ast\tRic\right) \\
     &\qquad + \frac{|\nabla |\tRic||^2}{(1 +
         |\tRic|^2)^{3/2}}. \]
     For a constant scalar curvature K\"ahler metric the form $\tRic$ is
     harmonic, so we have the following refined Kato inequality (see Branson~\cite{Bran},
     Calderbank-Gauduchon-Herzlich~\cite{CGH} or
     Cibotaru-Zhu~\cite[Theorem 3.8]{CZ}):
     \[ |\nabla|\tRic||^2 \leq \alpha_n |\nabla \tRic|^2 \]
     for a dimensional constant $\alpha_n < 1$. It follows from
     \eqref{eq:DtRic} that
     \[ \Delta (1 + |\tRic|^2)^{1/2} \geq (1-\alpha_n) \frac{
         |\nabla\tRic|^2}{(1 + |\tRic|^2)^{1/2}} - C |\mathrm{Rm}|\,
       |\tRic|. \]
     Integrating over $Y$, we get
     \[ \int_Y \frac{|\nabla\tRic_{\omega_\epsilon}|^2}{(1 +
         |\tRic|_{\omega_\epsilon}^2)^{1/2}}\, \omega_\epsilon^n \leq
       C_1 \Vert \mathrm{Rm}_{\omega_\epsilon}\Vert_{L^2} \Vert
       \tRic_{\omega_\epsilon} \Vert_{L^2} \to 0, \]
       as $\epsilon\to 0$. It then follows from \eqref{eq:DtRic} that
       \[ \int_Y |\Delta (1 + |\tRic_{\omega_\epsilon}|^2)^{1/2}| \,
         \omega_\epsilon^n \leq \int_Y \frac{|\nabla\tRic_{\omega_\epsilon}|^2}{(1 +
         |\tRic|_{\omega_\epsilon}^2)^{1/2}}\, \omega_\epsilon^n  + C \Vert \mathrm{Rm}_{\omega_\epsilon}\Vert_{L^2} \Vert
       \tRic_{\omega_\epsilon} \Vert_{L^2} \to 0,\]
     as $\epsilon\to 0$. 
\end{proof}

\subsection{Proof of the RCD property} 
In this section we assume that $(X,\omega_{KE})$ is a
singular K\"ahler-Einstein space, with $\mathrm{Ric}_{\omega_{KE}} =
\lambda \omega_{KE}$,  that can be approximated with cscK
metrics as in Definition~\ref{defn:cscKapprox}. Our first result is the following.
\begin{prop} \label{prop:RCD}
  The metric completion $(\hat{X}, d, \omega_{KE}^n)$ is an
  $RCD(\lambda, 2n)$ space.
\end{prop}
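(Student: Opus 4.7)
The plan is to apply the characterization of RCD spaces in Corollary \ref{cor:Honda}. By Proposition \ref{prop:RCD2}, Conditions (1), (2), and (4) already hold (with $K=\lambda$ coming from the Einstein equation), so the whole argument reduces to verifying Condition (3): every $W^{1,2}$-eigenfunction $f$ of the Laplacian on $\hat X$ is Lipschitz. Fix such an $f$ with $\Delta f = -\mu f$. By elliptic regularity $f$ is smooth on $X^{reg}$, and Moser iteration against the Sobolev inequality from Guo-Phong-Song-Sturm (used in Proposition \ref{prop:RCD2}(2)) shows $f\in L^\infty$. Since the distance $d$ is defined by infimum of lengths of smooth curves in $X^{reg}$ (see \eqref{eq:dKE}) and $L^\infty$ functions extend to the completion, it suffices to prove a pointwise bound $\sup_{X^{reg}}|\nabla f|_{\omega_{KE}}\leq C\|f\|_{L^\infty}$.

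To obtain such a bound I would pass through the cscK approximations $(Y,\omega_\epsilon)$. Choose an approximating function $f_\epsilon$ on $Y$ (e.g.\ a cutoff and smoothing of $\pi^*f$ using Lemma \ref{lem:cutoff}, or a spectral projection for $\Delta_{\omega_\epsilon}$) with $f_\epsilon \to \pi^*f$ smoothly on $\pi^{-1}(X^{reg})$ and with controlled $L^2$ gradient. Let $u_{\epsilon,t} = e^{t\Delta_{\omega_\epsilon}}f_\epsilon$. Writing $\mathrm{Ric}_{\omega_\epsilon} = \lambda\omega_\epsilon + \tRic_{\omega_\epsilon}$, the Bochner formula gives
\[
(\partial_t - \Delta_{\omega_\epsilon})|\nabla u_{\epsilon,t}|^2 \leq -2\lambda|\nabla u_{\epsilon,t}|^2 + 2|\tRic_{\omega_\epsilon}|\,|\nabla u_{\epsilon,t}|^2.
\]
Without a uniform Ricci lower bound, the second term must be absorbed by the integral estimates of Proposition \ref{prop:cscKbounds}. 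The cleanest route is to work with an auxiliary quantity such as
\[
\Phi_{\epsilon,t} = \bigl(1+|\nabla u_{\epsilon,t}|^2\bigr)^{1/2} + A\bigl(1+|\tRic_{\omega_\epsilon}|^2\bigr)^{1/2},
\]
chosen so that the refined Kato inequality behind Proposition \ref{prop:cscKbounds} makes $\Phi_{\epsilon,t}$ an approximate subsolution of the heat equation up to error terms that vanish in $L^1$ as $\epsilon\to 0$. Applying Duhamel together with the uniform upper bound $\bar H(t)$ on the heat kernel from Theorem \ref{thm:Hupper} converts the integrated information into a pointwise bound $\sup_Y |\nabla u_{\epsilon,t}|_{\omega_\epsilon}\leq C(t)$ on intervals $t\in [t_\epsilon,T]$, where $t_\epsilon\to 0$ is chosen slowly enough compared to the vanishing rates in Proposition \ref{prop:cscKbounds}.

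With such a uniform gradient bound at time $t = t_\epsilon$, I would pass $\epsilon\to 0$ using the smooth convergence $\omega_\epsilon \to \pi^*\omega_{KE}$ on $\pi^{-1}(X^{reg})$ and the local smooth convergence of the associated heat semigroups (which follows from the smooth convergence of the metrics combined with the uniform heat kernel bound). Since $f$ itself is an eigenfunction, $e^{t\Delta_{\omega_{KE}}}f = e^{-\mu t}f$, so on every compact subset of $X^{reg}$ the limit of $u_{\epsilon,t_\epsilon}$ is $f$, and the gradient bound transfers to $|\nabla f|_{\omega_{KE}} \leq C$ on $X^{reg}$. This verifies Condition (3) of Corollary \ref{cor:Honda}, so $(\hat X, d, \omega_{KE}^n)$ is an $RCD(\lambda,2n)$ space; since $\omega_{KE}^n$ coincides with the $2n$-dimensional Hausdorff measure on the smooth part (Condition (2) of Definition \ref{defn:almostsmooth}) the space is automatically non-collapsed.

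The main obstacle is the delicate balance between the two small parameters: the Bochner error carries $|\tRic_{\omega_\epsilon}|$, which is only $L^2$-small by Proposition \ref{prop:cscKbounds}, while the heat kernel bound $\bar H(t)$ diverges as $t\to 0$. Closing the estimate requires choosing $t_\epsilon$ carefully against $\|\tRic_{\omega_\epsilon}\|_{L^2}$, and crucially exploiting the refined Kato inequality with constant $\alpha_n < 1$ (used in the proof of Proposition \ref{prop:cscKbounds}) so that the positive gradient-of-Ricci term controls the undesirable cross term rather than being dominated by it. A subsidiary but necessary technical point is the choice of initial data $f_\epsilon$ on $Y$ so that it is compatible with both the smooth convergence to $\pi^*f$ on $\pi^{-1}(X^{reg})$ and uniform $L^2$ gradient control against $\omega_\epsilon$.
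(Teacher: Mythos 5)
Your proposal follows the same high-level strategy as the paper: reduce via Proposition~\ref{prop:RCD2} to Condition~(3) of Corollary~\ref{cor:Honda}, run the heat flow on the cscK approximations $(Y,\omega_\epsilon)$, combine the integral bounds for $\tRic_{\omega_\epsilon}$ from Proposition~\ref{prop:cscKbounds} with the uniform heat kernel bound from Theorem~\ref{thm:Hupper}, obtain an estimate valid for times $t\geq t_\epsilon$ with $t_\epsilon\to 0$, and then pass to the limit and use $e^{t\Delta}u=e^{-bt}u$. That skeleton is correct and matches the paper's proof.

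However, the specific mechanism you propose for closing the Bochner estimate has a genuine gap, and it is precisely the place where the paper does something more delicate. Your auxiliary quantity $\Phi_{\epsilon,t}=(1+|\nabla u_{\epsilon,t}|^2)^{1/2}+A(1+|\tRic_{\omega_\epsilon}|^2)^{1/2}$ does not couple the Ricci deviation to the evolving function: differentiating the second summand produces only $-A\Delta\psi^2$ with $\psi^2=(1+|\tRic|^2)^{1/2}$, which is independent of $u_{\epsilon,t}$ and therefore cannot cancel the Bochner error term, which has the form $|\tRic|\,(1+|\nabla u_{\epsilon,t}|^2)^{1/2}$. So your $\Phi$ is at best a subsolution up to a source $G$ with $\Vert G(\cdot,s)\Vert_{L^1}\to 0$, but not up to a source that vanishes pointwise. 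Feeding this into Duhamel,
\[\Phi(x_0,t) \leq \int_Y H(x_0,y,t-s_0)\,\Phi(y,s_0)\,dy + \int_{s_0}^{t}\int_Y H(x_0,y,t-s)\,G(y,s)\,dy\,ds,\]
does not close: near $s=t$ the kernel $H(x_0,\cdot,t-s)$ concentrates at $x_0$, so $\int_Y H(x_0,y,t-s)G(y,s)\,dy$ tends to $G(x_0,s)$ rather than to something controlled by the $L^1$ norm of $G$, and $\bar H(\tau)$ is not integrable as $\tau\to 0$. Choosing $t_\epsilon$ carefully against $\Vert\tRic\Vert_{L^2}$ addresses the lower endpoint, but not this upper-endpoint singularity, which is intrinsic to a pointwise Duhamel bound.

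The paper avoids this in two linked ways. First, the auxiliary quantity is $\psi^2 f_{t-s}^2$, carrying the bounded factor $f_{t-s}^2$; when one computes $\partial_s\int_Y\psi^2 f_{t-s}^2\,\rho_s$ the integration by parts produces the positive term $\int_Y\psi^2|\nabla f_{t-s}|^2\,\rho_s$, which cancels the bad term $\int_Y\tRic(\nabla f_{t-s},\nabla f_{t-s})\,\rho_s$ arising from $\partial_s\int_Y\tfrac12|\nabla f_{t-s}|^2\,\rho_s$ (up to the controlled $n|\lambda|$ piece). Second, the estimate is stated as a monotonicity inequality for the quantity $\int_Y\big(\tfrac12|\nabla f_{t-s}|^2+\psi^2 f_{t-s}^2\big)\rho_s$ in the parameter $s\in[s_0,1+s_0]$, with $t-s$ decreasing and $s$ increasing; one only ever evaluates $\rho_s$ for $s\geq s_0$, so $\bar H$ is never used near its singularity, and the small source terms $|\Delta\psi^2|+|\nabla\psi|^2$ appear only with the bounded weight $\rho_s$. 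Letting $s_0\to 0$ at the very end then gives the pointwise bound at $x_0$. If you want to salvage your version you would need to replace $A\psi^2$ by something like $A\psi^2 u_{\epsilon,t}^2$ and reorganize the calculation in the paper's integrated form; as written, the Duhamel step fails.
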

\begin{proof}
From Proposition~\ref{prop:RCD2} it follows that it is
sufficient to check condition (3) in
Corollary~\ref{cor:Honda}, i.e. to 
show that the eigenfunctions of the Laplacian on
$\hat{X}$ are bounded. More precisely, suppose that
$u\in W^{1,2}(\hat{X})$ satisfies $\Delta u = -b 
u$ on $X^{reg}$ for a constant $b$. We will show that
then $|\nabla u| \in L^\infty(X^{reg})$. 

For simplicity we can assume that $\Vert u\Vert_{L^2} = 1$. Using that
$u\in W^{1,2}(\hat{X})$, and also \cite[Lemma 11.2]{GPSS2}, we have
  \[ \sup |u| + \int_{X^{reg}} |\nabla u|^2 \omega_{KE}^n < C, \]
  where $C$ could depend on $u$ (in particular on $b$). 
  
  Next we will use the approximating cscK metrics $\omega_\epsilon$
  on the resolution $Y$ of $X$. Let us fix a large $i$, and let $f = \phi_iu$ for the
  cutoff function $\phi_i$ in Lemma~\ref{lem:cutoff}. We can view $f$
  as a function on $Y$, supported away from the exceptional divisor,
  where the metrics $\omega_\epsilon$ converge smoothly to
  $\omega_{KE}$. Note that we have a uniform bound $\sup|f| < C$,
  and also
  \[ \int_Y |\nabla f|^2 \omega_\epsilon^n \leq \int_Y 2( |u \nabla\phi_i|^2 +
    |\phi_i \nabla u|^2)\, \omega_\epsilon^n < 2C, \]
  for sufficiently small $\epsilon$. 

  Let us fix a point $x_0\in X$ where $\phi_i(x_0)=1$. We can view
  $x_0\in Y$ too. We will do the following calculation on $Y$, using
  the metric $\omega_\epsilon$ for sufficiently small $\epsilon$. To
  simplify the notation we will omit the subscript $\epsilon$. All
  geometric quantities are defined using the metric
  $\omega_\epsilon$. We will write $\rho_t = H(x_0,y,t)$ for the heat
  kernel centered at $x_0$ on $(Y, \omega_\epsilon)$, and let $f_t$
  denote the solution of the heat equation on $(Y, \omega_\epsilon)$
  with initial condition $f$. We will also omit the volume form
  $\omega_\epsilon^n$ in the integrals below. We have the following.
  \[ \label{eq:dsdf} \partial_s \int_Y \frac{1}{2}|\nabla f_{t-s}|^2\, \rho_s &=
    \int_Y -\langle \nabla f_{t-s}, \nabla \Delta f_{t-s}\rangle
    \rho_s + \frac{1}{2}|\nabla f_{t-s}|^2\, \Delta \rho_s \\
    &= \int_Y \Big( |\nabla^2 f_{t-s}|^2 + \mathrm{Ric}(\nabla
    f_{t-s}, \nabla f_{t-s}) \Big) \rho_s. 
  \]
  In order to compensate for the Ricci term, we let $\psi^2 = (1 +
  |\tRic|^2)^{1/2}$, where $\tRic = \mathrm{Ric}_{\omega_\epsilon} - \lambda
  \omega_\epsilon$ as in Proposition~\ref{prop:cscKbounds}. We have
  \[ \label{eq:dspsif} \partial_s \int_Y \psi^2 f_{t-s}^2\, \rho_s &= \int_Y -2\psi^2
    f_{t-s} \Delta f_{t-s} \,\rho_s + \psi^2 f_{t-s}^2 \Delta \rho_s \\
    &= \int_Y \Big(\Delta(\psi^2) f_{t-s}^2 + 2\langle \nabla\psi^2,
    \nabla f^2_{t-s}\rangle + 2\psi^2 |\nabla f_{t-s}|^2\Big)\,
    \rho_s \\
    &\geq -C\int_Y (|\Delta \psi^2|  + |\nabla\psi|^2)\, \rho_s + \int_Y
    \psi^2 |\nabla f_{t-s}|^2\, \rho_s,
  \]
  where the constant $C$ depends on the uniform supremum bound for $f_{t-s}$. 

  Note that $\psi^2 \geq |\mathrm{Ric}| - n|\lambda|$, so if we combine
  \eqref{eq:dsdf} and \eqref{eq:dspsif}, we get
  \[ \partial_s \int_Y \left( \frac{1}{2} |\nabla f_{t-s}|^2 + \psi^2
      f_{t-s}^2\right)\, \rho_s \geq -C\int_Y (|\Delta\psi^2| +
    |\nabla\psi|^2)\, \rho_s - \int_Y n|\lambda| |\nabla f_{t-s}|^2\,
    \rho_s. \]
  At this point, let us fix $s_0 > 0$, and only work with $s \in [s_0,
  2]$. From Proposition~\ref{prop:cscKbounds} we know that
  $\Vert\Delta\psi^2 \Vert_{L^1}, \Vert \nabla\psi\Vert_{L^2} \to 0$
  as $\epsilon\to 0$. From Theorem~\ref{thm:Hupper} we have a uniform
  upper bound for $\rho_s$, depending on $s_0$, but independent of
  $\epsilon$. Therefore, if we choose $\epsilon$ sufficiently small,
  say $\epsilon < \epsilon_{s_0}$, then we have
  \[ \partial_s \int_Y \left( \frac{1}{2} |\nabla f_{t-s}|^2 + \psi^2
      f_{t-s}^2\right)\, \rho_s \geq -1 - n|\lambda| \int_Y |\nabla
    f_{t-s}|^2\, \rho_s, \]
  and so
  \[ \partial_s\, e^{2n|\lambda|s} \int_Y \left( \frac{1}{2} |\nabla f_{t-s}|^2 + \psi^2
      f_{t-s}^2\right)\, \rho_s \geq -C. \]
  Applying this with $t=1+s_0$ and integrating from $s=s_0$ to
  $s=1+s_0$, it follows that for such $\epsilon$ we have 
  \[ e^{2n|\lambda| s_0} \int_Y \left(\frac{1}{2} |\nabla f_1|^2 + \psi^2
      f_1^2\right)\, \rho_{s_0} \leq C + e^{2n|\lambda|(s_0+1)} \int_Y
    \left(\frac{1}{2}|\nabla f|^2 + \psi^2 f^2\right)\,
    \rho_{1+s_0}. \]
  Using the uniform upper bound for $\rho_{1+s_0}$, together with the
  integral bound for $|\tRic|^2$ from
  Proposition~\ref{prop:cscKbounds}, we obtain that
  \[ \int_Y |\nabla f_1|^2\, \rho_{s_0} \leq C, \]
  where $C$ is independent of $\epsilon, s_0$. As $\epsilon\to 0$, the heat
  kernels $\rho_{s_0}$ converge locally smoothly on $X^{reg}$ to the
  heat kernel on $(\hat{X}, \omega_{KE})$, and so in the limit we
  obtain the estimate
  \[ \int_{X^{reg}} |\nabla f_1|^2\, \rho_{s_0} \leq C, \]
  where all the quantities are computed using $\omega_{KE}$, and
  recall that $f_1$ is simply the solution $f_t$ of the heat flow with initial condition $f$
  at time $t=1$. Note that the constant $C$ does not depend on $s_0$,
  so in fact, by letting $s_0\to 0$, we obtain the pointwise estimate
  \[ |\nabla f_1|^2(x_0) \leq C, \]
  and this holds uniformly for any $x_0\in X^{reg}$.

  Recall that $f= \phi_i u$, where $u$ is the eigenfunction that we
  want to estimate, and $\phi_i$ is a cutoff function from
  Lemma~\ref{lem:cutoff}. To keep track of the dependence on $i$, let
  us now write $f^{(i)}= \phi_i u$, and write $f^{(i)}_1$ for the
  corresponding solutions of the heat equation at time $1$. Since
  $f^{(i)} \to u$ in $L^2$, it follows that for any compact set
  $K\subset X^{reg}$ the solutions $f^{(i)}_1$ converge smoothly to
  $u_1$ on $K$. But $u_1 = e^{-b}u$, so we obtain the required pointwise
  bound $|\nabla u|^2(x_0) \leq e^{2b} C$ for any $x_0\in X^{reg}$. 
\end{proof}

Next we show that singular K\"ahler-Einstein metrics on projective varieties, that can be
approximated by cscK metrics, define  K\"ahler
currents. This result was previously shown by
Guedj-Guenancia-Zeriahi~\cite{GGZ} for singular
K\"ahler-Einstein metrics that are either globally smoothable, or that
only have isolated smoothable singularities. 
\begin{thm}\label{thm:Kcurrent}
  Let $\omega_{KE}$ denote a singular K\"ahler-Einstein metric on a
  normal projective variety $X$, which can be approximated by cscK
  metrics as in Definition~\ref{defn:cscKapprox}. Let $\eta_{FS}$ denote the
  pullback of the Fubini-Study metric to $X$ under a projective
  embedding of $X$. Then there is a constant $\delta
  > 0$ such that $\omega_{KE} > \delta \eta_{FS}$.  
\end{thm}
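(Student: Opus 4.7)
The plan is to establish a uniform pointwise bound $\mathrm{tr}_{\omega_\epsilon} \pi^*\eta_{FS} \leq C$ on compact subsets of $\pi^{-1}(X^{reg})$, using the Chern-Lu (Aubin-Yau) inequality on the cscK approximations combined with the integral Ricci bounds of Proposition~\ref{prop:cscKbounds}, and then pass to the limit to deduce $\mathrm{tr}_{\omega_{KE}} \eta_{FS} \leq C$ on $X^{reg}$. This is equivalent to the pointwise inequality $\omega_{KE} \geq (1/C)\eta_{FS}$ on $X^{reg}$. Since $\omega_{KE}$ has bounded K\"ahler potentials and $X\setminus X^{reg}$ is an analytic subset of codimension at least two (and hence pluripolar), this extends to the global inequality $\omega_{KE} \geq \delta\,\eta_{FS}$ of positive currents on $X$ via the standard extension theorem for bounded plurisubharmonic functions across pluripolar sets.

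Since $\eta_X$ and $\eta_{FS}$ are both smooth on the compact space $X$ and $\eta_X$ is K\"ahler, I would first record that $\eta_X \geq c_0 \eta_{FS}$ for some $c_0>0$, whence $\eta_\epsilon \geq c_0\pi^*\eta_{FS}$ on $Y$ by Definition~\ref{defn:cscKapprox}(a). The Chern-Lu inequality on $(Y,\omega_\epsilon)$ gives, on $Y\setminus E$ with $E$ the exceptional divisor,
\[ \Delta_{\omega_\epsilon}\log\mathrm{tr}_{\omega_\epsilon}\pi^*\eta_{FS} \geq -C\bigl(1+|\tRic_{\omega_\epsilon}|_{\omega_\epsilon}\bigr) - B\,\mathrm{tr}_{\omega_\epsilon}\pi^*\eta_{FS}, \]
where $B$ is controlled by the bisectional curvature of $\eta_{FS}$ on $X$. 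Following the classical Aubin-Yau trick, set $H_\epsilon := \log\mathrm{tr}_{\omega_\epsilon}\pi^*\eta_{FS} - A u_\epsilon$ with $A := 2B/c_0$. Using $\Delta_{\omega_\epsilon}u_\epsilon = n - \mathrm{tr}_{\omega_\epsilon}\eta_\epsilon$ together with the comparison $\eta_\epsilon \geq c_0\pi^*\eta_{FS}$ and the uniform bound on $u_\epsilon$ from Definition~\ref{defn:cscKapprox}(b), the function $w_\epsilon := e^{H_\epsilon}$ satisfies (after absorbing the linear Ricci term via Young's inequality)
\[ \Delta_{\omega_\epsilon} w_\epsilon \geq c_1 w_\epsilon^2 - C\bigl(1+|\tRic_{\omega_\epsilon}|^2_{\omega_\epsilon}\bigr). \]

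The central step is then to bootstrap this differential inequality into a uniform $L^\infty$-bound for $w_\epsilon$ on compact subsets of $Y\setminus E$. Testing against $w_\epsilon^{p-1}$ and integrating over $Y$, using that the integrated Laplacian term has a favorable sign, yields
\[ \int_Y w_\epsilon^{p+1}\,\omega_\epsilon^n \leq C_p \int_Y \bigl(1+|\tRic_{\omega_\epsilon}|^2_{\omega_\epsilon}\bigr) w_\epsilon^{p-1}\,\omega_\epsilon^n. \]
Taking $p=1$ and using the uniform $L^2$-bound on $|\tRic_{\omega_\epsilon}|$ from Proposition~\ref{prop:cscKbounds} immediately gives $\|w_\epsilon\|_{L^2}\leq C$. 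To iterate, I would use the $L^1$-control of $\Delta(1+|\tRic_{\omega_\epsilon}|^2)^{1/2}$ from Proposition~\ref{prop:cscKbounds} together with the uniform Sobolev inequality of Guo-Phong-Song-Sturm to bootstrap higher integrability of $\psi_\epsilon := (1+|\tRic_{\omega_\epsilon}|^2)^{1/2}$; the self-improving quadratic term $c_1 w_\epsilon^2$ on the left then allows a Moser-type iteration that yields $\|w_\epsilon\|_{L^\infty(K)} \leq C(K)$ uniformly for any compact $K\ssubset Y\setminus E$. Passing to the limit $\epsilon\to 0$ using the local smooth convergence $\omega_\epsilon\to \pi^*\omega_{KE}$ on $\pi^{-1}(X^{reg})$ from Definition~\ref{defn:cscKapprox}(c) yields the desired trace bound on $X^{reg}$.

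The main obstacle is the Moser iteration: $|\tRic|$ is controlled only in $L^2$, whereas standard linear Moser iteration in real dimension $2n$ would require $L^p$-control with $p>n$. The argument hinges on combining the self-improving quadratic $c_1 w_\epsilon^2$ with the additional $W^{2,1}$-type estimate for $\psi_\epsilon$ from Proposition~\ref{prop:cscKbounds}, which via the uniform Sobolev inequality provides just enough additional integrability of the Ricci term to close the iteration.
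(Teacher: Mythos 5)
Your setup is correct and matches the paper's start: Chern-Lu on $(Y,\omega_\epsilon)$, subtract a multiple of $u_\epsilon$, and get a differential inequality for $H_\epsilon$ (resp. $w_\epsilon = e^{H_\epsilon}$) whose error term is controlled by $|\tRic_{\omega_\epsilon}|$. But the paper then takes a genuinely different route from yours. It shows that $F=\max\{0, H_\epsilon\}$ satisfies $\Delta_{\omega_\epsilon}F\geq -|\tRic_{\omega_\epsilon}|-C$ distributionally on all of $Y$, and then runs a \emph{parabolic} argument: it pairs $F$ against the heat kernel $H(x,y,t)$ and integrates the resulting $t$-derivative inequality from $t_0$ to $1$. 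The step you are missing that makes this close is the Guo--Phong--Song--Sturm upper bound $H(x,y,t)\leq\bar H(t)$ (Theorem~\ref{thm:Hupper}), which converts the purely $L^1$ control $\|\tRic_{\omega_\epsilon}\|_{L^1}\to 0$ into the estimate $\int_Y |\tRic_{\omega_\epsilon}|\,H(x,\cdot,t)\,\omega_\epsilon^n\leq \bar H(t)\|\tRic_{\omega_\epsilon}\|_{L^1}\leq 1$ for $\epsilon<\epsilon_0(t_0)$. The final constant is independent of $t_0$, so one lets $\epsilon\to 0$, then $t\to 0$. Crucially this is a ``for $\epsilon$ small'' argument exploiting that $\|\tRic_{\omega_\epsilon}\|_{L^1}\to 0$, not an a priori bound uniform in $\epsilon$.

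Your Moser-iteration step, by contrast, does not close as written. Testing $\Delta w\geq c_1 w^2 - C(1+|\tRic|^2)$ against $w^{p-1}$ gives $\int w^{p+1}\leq C_p\int(1+|\tRic|^2)w^{p-1}$; with $p=1$ and the uniform $L^2$-bound on $\tRic$ you get $\int w^2\leq C$, but the next step already requires $\int|\tRic|^2 w^2<\infty$, which needs $|\tRic|\in L^q$ for some $q>2$ (or $w\in L^\infty$). The quadratic self-improvement (Keller--Osserman / De Giorgi) still needs the inhomogeneity $|\tRic|^2$ to lie in $L^q$ for some $q>n$ to absorb the source; $L^1$ is not enough, because the localization term $ck^2\int\eta^2(w-k)_+$ can only beat $\int g\eta^2(w-k)_+$ when $g\in L^\infty$ (or at least $L^q$ with enough integrability against the relevant Sobolev exponent). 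Moreover, the auxiliary bootstrap you suggest does not upgrade the integrability of $|\tRic|$: the $W^{2,1}$-type control of $\psi_\epsilon^2=(1+|\tRic_{\omega_\epsilon}|^2)^{1/2}$ embeds only into $L^{n/(n-1)}$ in real dimension $2n$, which is \emph{weaker} than the $L^2$ bound on $\psi_\epsilon^2$ already furnished by the $L^2$ bound on $\tRic$, and the gradient bound in Proposition~\ref{prop:cscKbounds} is for $\int|\nabla\tRic|^2/(1+|\tRic|^2)^{1/2}$, not $\int|\nabla\psi_\epsilon^2|^2$, so it does not put $\psi_\epsilon^2$ in $W^{1,2}$ either. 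You would need to replace the Moser iteration with the paper's heat-kernel argument (or find some other mechanism that uses $\|\tRic_{\omega_\epsilon}\|_{L^1}\to 0$) for the proof to go through.
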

\begin{proof}
  By assumption we have cscK metrics $\omega_\epsilon =
  \eta_\epsilon + \ddbar u_\epsilon$ on a resolution $\pi:Y \to X$,
  where $\eta_\epsilon\to \pi^*\eta_X$ for a smooth metric $\eta_X$ on
  $X$, where $\eta_\epsilon \geq \pi^*\eta_X$. 
  We apply the Chern-Lu inequality to the map $\pi : Y \to X$, away
  from the exceptional divisor $E$, where on $Y$ we use the metric
  $\omega_\epsilon$ and on $X$ we use the pullback $\eta_{FS}$ of the
  Fubini-Study metric under a projective embedding of $X$. For simplicity we write $\eta_{FS}$
  for $\pi^*\eta_{FS}$, and we write $g_{i\bar j}$ and $h_{i\bar j}$ for
  the metric components of $\omega_\epsilon$ and $\eta_{FS}$
  respectively. On $Y\setminus E$ we then have $|\partial \pi|^2 =
  \mathrm{tr}_{\omega_\epsilon} \eta_{FS}$, and (see e.g. \cite{JMR})
  \[ \Delta_{\omega_\epsilon} \log \mathrm{tr}_{\omega_\epsilon}
    \eta_{FS} \geq \frac{ g^{i\bar l}g^{k\bar j}
      \mathrm{Ric}(\omega_\epsilon)_{i\bar j} h_{k\bar
        l}}{\mathrm{tr}_{\omega_\epsilon} \eta_{FS}} - A
    \mathrm{tr}_{\omega_\epsilon} \eta_{FS}, \]
  where $A$ is independent of $\epsilon$, using that $\eta_{FS}$ has
  bisectional curvature bounded from above. It follows that
  \[ \Delta_{\omega_\epsilon} \log \mathrm{tr}_{\omega_\epsilon}
    \eta_{FS} &\geq \frac{ g^{i\bar l}g^{k\bar j}
      (\mathrm{Ric}(\omega_\epsilon)_{i\bar j} - \lambda g_{i\bar j})h_{k\bar
        l}}{\mathrm{tr}_{\omega_\epsilon} \eta_{FS}} + \lambda - A
    \mathrm{tr}_{\omega_\epsilon} \eta_{FS} \\
     &\geq -|\mathrm{Ric}_{\omega_\epsilon} - \lambda\omega_\epsilon|
     +\lambda - A\mathrm{tr}_{\omega_\epsilon} \eta_X.
   \]
   We also have
   \[ \Delta_{\omega_\epsilon} (-u_\epsilon) =
     \mathrm{tr}_{\omega_\epsilon} \eta_\epsilon - n \geq
     \mathrm{tr}_{\omega_\epsilon} \eta_X - n \geq C_1^{-1}
     \mathrm{tr}_{\omega_\epsilon} \eta_{FS} - n, \]
   for some $C_1 > 0$, using that locally both $\eta_{FS}$ and $\eta_X$
   are given by pullbacks of smooth metrics under embeddings of $X$. 
   This implies, that
   \[ \Delta_{\omega_\epsilon} ( \log \mathrm{tr}_{\omega_\epsilon}
    \eta_{FS}  - AC_1u_\epsilon) &\geq -|\mathrm{Ric}_{\omega_\epsilon} - \lambda\omega_\epsilon|
    +\lambda - AC_1n \\
    &\geq -|\mathrm{Ric}_{\omega_\epsilon} - \lambda\omega_\epsilon| -
    C_2,  \]
  for some $C_2 > 0$. 
  Let us define
  \[ F = \max\{0, \log \mathrm{tr}_{\omega_\epsilon}
    \eta_{FS}  - AC_1u_\epsilon\}. \]
  Since $\omega_\epsilon$ is a K\"ahler metric, $F$ is bounded from
  above, and by definition $F$ is also bounded below. In addition $F$
  satisfies the differential inequality
  \[ \Delta_{\omega_\epsilon} F \geq - |\mathrm{Ric}_{\omega_\epsilon}
    - \lambda\omega_\epsilon| - C_2 \]
  in a distributional sense on all of $Y$. To see this, note first
  that the differential inequality is satisfied in the distributional
  sense on $Y\setminus E$ by the definition of $F$ as a maximum of two
  functions satisfying the inequality. Then the differential
  inequality can be extended across $E$ using that $F$ is bounded, by
  an argument similar to Lemma~\ref{lem:bddharmonic}. 
  
    Fix $x\in Y\setminus E$, and let $H(x,y,t)$ denote the heat kernel
    on $(Y, \omega_\epsilon)$. Fix some $t_0 > 0$. For $t\in [t_0, 1]$
    we have
    \[ \partial_t \int_Y F(y)\, H(x,y,t)\, dy &= \int_Y F(y)\, \Delta_y H(x,y,t)\, dy \\
    &= \int_Y \Delta_y F(y) \, H(x,y,t)\, dy \\
    &\geq \int_Y (-|\mathrm{Ric}_{\omega_\epsilon} -
    \lambda\omega_{\epsilon}|(y) -C_2) H(x,y,t)\, dy. \]
    Using the uniform upper bound for $H$ (see Theorem~\ref{thm:Hupper}), together with
    Proposition~\ref{prop:cscKbounds}, we find that there exists an
    $\epsilon_0 = \epsilon_0(t_0)$, depending on $t_0$, such that if
    $\epsilon < \epsilon_0$, then
    \[ \partial_t \int_Y F(y)\, H(x,y,t)\, dy \geq -2C_2, \]
    and so for $\epsilon < \epsilon_0$ we have
    \[ \int_Y F(y) H(x,y,t_0)\, dy &\leq \int_Y F(y) H(x,y,1)\, dy +
      2C_2. \]
    Note that
    \[ F \leq e^{-AC_1u_\epsilon} \mathrm{tr}_{\omega_\epsilon}
      \eta_{FS}, \]
    so we have (using the uniform upper bound for the heat kernel as
    well), 
    \[ \int_Y F(y) H(x,y,t_0)\, dy &\leq C_3 e^{AC_1\sup |u_\epsilon|} \int_Y
      \mathrm{tr}_{\omega_\epsilon}\eta_{FS}\, \,\omega_\epsilon^n + 2C_2  \\
      &\leq C_4. \]
    Here we also used that we have a uniform bound for $\sup|u_\epsilon|$,
    and the cohomology classes 
    $[\omega_\epsilon]$ are uniformly bounded. Crucially, the constant $C_4$ is
    independent of $t_0$. 

    Note that as $\epsilon\to 0$, the heat kernels $H(x,y,t)$ for
    $(Y,\omega_\epsilon)$ converge locally smoothly on $Y\setminus E$
    to the heat kernel for $(X, \omega_{KE})$. At the same time, the
    function $F(y)$ converges locally uniformly on $Y\setminus E$ to
    \[\max\{0,\log \mathrm{tr}_{\omega_{KE}}\eta_{FS} - AC_1u_{KE}\}.\]
    It follows that in the limit, for any $t > 0$, we have
    \[ \int_{X^{reg}} (\log\mathrm{tr}_{\omega_{KE}}\eta_{FS}- AC_1u_{KE})(y)\,
      H_{\omega_{KE}}(x,y,t) \, \omega_{KE}^n(y) \leq C_4. \]
    Letting $t\to 0$ we obtain a pointwise bound
    $\mathrm{tr}_{\omega_{KE}}\eta_{FS} < C_5$, as required. 
\end{proof}

\section{Homeomorphism with the underlying variety}
\label{sec:DS}
In this section our goal is to show that the metric completion $\hat{X}$ of the
smooth locus of a singular K\"ahler-Einstein metric $(X, \omega_{KE})$
is homeomorphic to $X$, under suitable assumptions. These assumptions
hold in the setting of Theorem~\ref{thm:main}, where $(X,\omega_{KE})$
can be approximated with cscK metrics.

We assume that $X$ is a normal projective
variety of dimension $n$, and we have a K\"ahler current $\omega$ on $X$ with bounded
local potentials, such that $\omega\in c_1(L)$ for a line bundle $L$
on $X$. We will write $\eta_{FS}$ for the pullback of the
Fubini-Study metric to $X$ under a projective embedding.
We make the following assumptions:
\begin{itemize}
\item[(1)] The Ricci form of $\omega$, as a current, satisfies
  $\mathrm{Ric}(\omega) = \lambda\omega$ for a constant $\lambda\in
  \mathbb{R}$ on the regular part $X^{reg}$ of $X$. 
 \item[(2)] $\omega$ is a K\"ahler current, i.e. $\omega \geq c
   \eta_{FS}$ on $X$ for some $c > 0$. 
  \item[(3)] The metric completion $(\hat{X}, d_{\hat{X}})$ of $(X^{reg},
    \omega)$ is a noncollapsed $RCD(2n, \lambda)$ space, where the
    measure on $\hat{X}$ is the pushforward of $\omega^n$ from
    $X^{reg}$. 
  \item[(4)] We have $\omega^n = F \eta_{FS}^n$, where $F \in L^p(X,
    \eta_{FS}^n)$ for some $p > 1$. 
\end{itemize}

We have seen that Conditions (1)--(3) are satisfied for singular
K\"ahler-Einstein metrics $(X,\omega_{KE})$, with $\omega_{KE}\in
c_1(L)$, that can be approximated with cscK metrics in the sense of 
Definition~\ref{defn:cscKapprox}. For Condition (4), see
Eyssidieux-Guedj-Zeriahi~\cite[Section 7]{EGZ}. 

The main result of this section is the following, and the proof will
be completed after Proposition~\ref{prop:nocodim2} below. 
\begin{thm}\label{thm:homeomorphic}
  Let $(X,\omega)$ satisfy the conditions (1)--(4) above. Then the
  metric completion $\hat{X}$ is homeomorphic to $X$. 
\end{thm}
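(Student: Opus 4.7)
The strategy is to construct a continuous bijection $\Phi\colon\hat{X}\to X$ extending the identity on $X^{reg}$ and invoke compactness. Condition~(2), the K\"ahler current lower bound $\omega\geq c\,\eta_{FS}$, shows that the length distance $d$ on $X^{reg}$ dominates the Fubini--Study distance, so every $d$-Cauchy sequence in $X^{reg}$ converges in $X$; this produces a continuous surjection $\Phi$. Since $\hat{X}$ is compact and $X$ is Hausdorff, the theorem reduces to proving that $\Phi$ is injective, or equivalently that each fiber over a point $x\in X\setminus X^{reg}$ consists of a single point of $\hat{X}$.

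The heart of the argument is to show that the metric regular set $\mathcal{R}\subset\hat{X}$ coincides with $X^{reg}\subset\hat{X}$. The inclusion $X^{reg}\subset\mathcal{R}$ is immediate since $\omega$ is a smooth K\"ahler metric there. For the reverse, cover the algebraic singular set by open sets $U\subset X$ on which it is cut out by bounded holomorphic functions $f_1,\dots,f_m$; by Lemma~\ref{lem:bddharmonic} the real and imaginary parts of the $f_j$ extend to bounded harmonic functions on $\Phi^{-1}(U)\subset\hat{X}$. Following the spirit of~\cite{CJ20}, I would establish a three-annulus lemma bounding an Almgren-type frequency of these harmonic functions on balls that are sufficiently close to being Euclidean. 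This yields a uniform \emph{finite order of vanishing} of the $f_j$ along the algebraic singular set, and hence a quantitative upper bound on the $(2n{-}2)$-dimensional content of their common zero set in any almost-Euclidean ball. If $\hat{x}\in\mathcal{R}$ were algebraically singular, rescalings at $\hat{x}$ would converge in the pointed Gromov--Hausdorff sense to a Euclidean ball on which (suitably normalized) the rescaled $f_j$ converge to nontrivial homogeneous harmonic polynomials, while the tangent cone analysis of \cite{DPG2} together with the accumulation of algebraically singular points forces the common zero locus to be too large, contradicting the finite-frequency bound.

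With the identification $\mathcal{R}=X^{reg}$ in hand, one combines Proposition~\ref{prop:nocodim1} with the codimension-two estimate supplied by Proposition~\ref{prop:nocodim2} (proved in the same section) to deduce that the metric singular set of $\hat{X}$ has Hausdorff codimension at least four. One is then in the Donaldson--Sun~\cite{DS1} setup. Using the Ricci bound $\mathrm{Ric}(\omega)=\lambda\omega$, the gradient estimate of Proposition~\ref{prop:RCDgrad}, the heat kernel bound of Theorem~\ref{thm:Hupper}, and the $L^p$-volume density bound (Condition~(4)), I would apply H\"ormander's $L^2$-method on $X^{reg}$ to produce, for each $\hat{x}\in\hat{X}$ and all sufficiently large $k$, a peak holomorphic section $s_k\in H^0(X,L^k)$ whose pointwise norm (with respect to the hermitian metric induced by $\omega$) is sharply concentrated at $\hat{x}$; these sections extend across the codimension-four singular set by Hartogs. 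The resulting partial $C^0$-estimate shows that $\Psi_k\circ\Phi\colon\hat{X}\to\mathbb{P}^N$ separates points of $\hat{X}$, where $\Psi_k$ is the algebraic Kodaira map. Since $\Psi_k$ is itself an embedding of $X$ for large $k$, the composite $\Psi_k\circ\Phi$ factors as an embedding through $\Phi$, forcing $\Phi$ to be injective, hence a homeomorphism.

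The main obstacle is the second step. The three-annulus / frequency monotonicity argument has to be carried out on the singular RCD space $\hat{X}$ itself, with quantitative error terms controlled by the Gromov--Hausdorff distance to a Euclidean ball, and one must then interweave the purely metric information (finite vanishing order plus RCD stratification) with the complex-analytic information (the algebraic singular set is precisely the common zero set of the $f_j$ and has pure complex codimension one or more) in order to extract the clean dichotomy $\mathcal{R}=X^{reg}$. By contrast, once this coincidence is available, the final step is a relatively standard adaptation of the H\"ormander / peak-section machinery of~\cite{DS1,LSz1}.
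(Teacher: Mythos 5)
Your proposal has the correct overall architecture: condition (2) gives a Lipschitz surjection $\hat{\Phi}_X : \hat{X}\to X$; the proof reduces to injectivity; the key new ingredient is to show that the metric regular set coincides with $X^{reg}$ by exploiting harmonic extensions of local defining functions of the algebraic singular set, controlled by a three-annulus / finite-vanishing-order argument; and the endgame is the Donaldson--Sun / H\"ormander peak-section machinery. This matches the paper's strategy step for step, and you have correctly identified the genuinely novel point (the identification $\mathcal{R}=X^{reg}$ via the vanishing-order bound for the defining functions, which the paper establishes in Lemma~\ref{lem:finitevanish} through Proposition~\ref{prop:almostreg31}).

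However, the way you propose to close the argument for $\mathcal{R}=X^{reg}$ is not what the paper does, and I do not think it works as stated. You claim a contradiction because ``the accumulation of algebraically singular points forces the common zero locus to be too large, contradicting the finite-frequency bound.'' But an algebraically singular point of $\hat{X}$ can perfectly well be isolated; under blow-up the algebraic singular set may shrink to a single point, so the zero locus of the limiting homogeneous harmonic function can be exactly $\{0\}$, and there is no tension with the frequency bound. The finite vanishing order is not used to produce a contradiction via size; it is used \emph{constructively}. In Proposition~\ref{prop:almostreg1} the paper extracts a nontrivial homogeneous harmonic limit $\tilde{s}_\infty$ on $\mathbb{R}^{2n}$, shows its zero set $\Sigma$ has Hausdorff dimension at most $2n-2$ (via the Caffarelli--Friedman nodal-set bound applied to $\nabla u_\infty$ and the Cauchy--Riemann relations), deduces that $\mathbb{R}^{2n}\setminus\Sigma$ is connected so that Anderson's $\epsilon$-regularity furnishes a single limiting complex structure, and then runs the full H\"ormander $L^2$-construction à la \cite{DS1,LSz1} (with $\Sigma$ playing the role of the excisable singular set) to produce holomorphic sections of $L^{k_i'}$ giving a generically one-to-one — hence, by normality, one-to-one — map near $x_i$. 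This \emph{proves} $\hat{x}_i\in X^{reg}$ for large $i$, contradicting the hypothesis. So the H\"ormander peak-section step is not a terminal cleanup after $\mathcal{R}=X^{reg}$ is established: it is needed \emph{inside} the proof that $\mathcal{R}=X^{reg}$.

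Two smaller points. First, your claim that the three-annulus lemma ``yields a uniform finite order of vanishing'' has the logic reversed: the paper proves the finite vanishing order (Lemma~\ref{lem:finitevanish}) first, directly from the noncollapsing and the $L^p$-density in Condition~(4), and only then uses the three-annulus lemma to propagate a doubling bound down to small scales near almost-Euclidean points. Second, Propositions~\ref{prop:nocodim1} and~\ref{prop:nocodim2} give codimension at least $3$ for the singular sets of iterated tangent cones, which is what is actually used in the proof of Theorem~\ref{thm:homeomorphic} (to guarantee the capacity-zero hypothesis of Proposition~\ref{prop:goodsection}); the codimension-$4$ bound is a separate, subsequent proposition and is not needed for the homeomorphism itself.
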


Rescaling the metric $\omega$ we can assume that $L$ is a very
ample line bundle on $X$. The sections of $L$ define a holomorphic
embedding $\Phi_X : X\to \mathbb{CP}^N$, and we can identify the
image of this embedding with $X$. By the assumption that $\omega$ is a
K\"ahler current, we
have that the map
\[ \Phi_X : (X^{reg}, \omega) \to (X, \eta_{FS}) \subset
  \mathbb{CP}^N \]
is Lipschitz continuous, where we use the length metric as defined in
\eqref{eq:dKE}. In particular $\Phi_X$ extends to a Lipschitz
continuous map
\[ \hat{\Phi}_X : \hat{X} \to (X, \eta_{FS}). \]
Note that $\hat{\Phi}_X$ is surjective, since the image of $X^{reg}$
is dense in $X$, so our task is to prove that $\hat{\Phi}_X$ is
injective, i.e. to show that the sections of $L$ separate points of $\hat{X}$. In
fact we will work with $L^k$ for large $k$, however since $L$ is very
ample, the map defined by section of $L^k$ is obtained by composing
the map defined by sections of $L$ with an embedding of
$\mathbb{CP}^N$ into a larger projective space.

The general strategy for showing that sections of $L^k$ separate points of
$\hat{X}$ is similar to the work of Donaldson-Sun~\cite{DS1}. We will
apply the following form of H\"ormander's estimate (see
e.g. \cite[Theorem 6.1]{Demailly}):
\begin{thm}\label{thm:Hormander}
  Let $(P,h_P)$ be a Hermitian holomorphic
  line bundle on a K\"ahler manifold $(M, \omega_M)$, which
  admits some complete K\"ahler metric. Suppose that the curvature
  form of $h_P$ satisfies $\sqrt{-1}F_{h_P} \geq c \omega_M$ for some constant
  $c > 0$. Let $\alpha\in \Omega^{n,1}(P)$ be such that
  $\bar\partial\alpha = 0$. Then there exists $u\in \Omega^{n,0}(P)$
  such that $\bar\partial u = \alpha$, and
  \[ \Vert u\Vert_{L^2}^2 \leq \frac{1}{c} \Vert
    \alpha\Vert^2_{L^2}, \]
  provided the right hand side is finite. 
\end{thm}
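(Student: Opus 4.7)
The plan is to establish this by the standard functional-analytic proof of H\"ormander's $L^2$-estimate, based on the Bochner-Kodaira-Nakano identity combined with a Hahn-Banach argument. I would view $\bar\partial$ as a closed, densely-defined unbounded operator $T: L^2_{n,0}(M,P) \to L^2_{n,1}(M,P)$, with adjoint $T^*$, and similarly $S: L^2_{n,1}(M,P)\to L^2_{n,2}(M,P)$. By the hypothesis $\bar\partial \alpha = 0$, the given form $\alpha$ lies in $\ker S$.

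The key reduction is to prove the a priori estimate
\[
|\langle \alpha, v\rangle|^2 \leq c^{-1} \Vert \alpha\Vert^2_{L^2}\, \Vert T^* v\Vert^2_{L^2}
\]
for all $v \in \mathrm{Dom}(T^*)\cap \ker S$, after which a standard Hahn-Banach argument on the functional $T^* v \mapsto \langle \alpha, v\rangle$ produces the required $u$ with $Tu = \alpha$ and $\Vert u\Vert^2_{L^2} \leq c^{-1}\Vert \alpha\Vert^2_{L^2}$. (Components of $v$ in $(\ker S)^\perp\subset\ker T^*$ contribute nothing to either side, so it is harmless to restrict to $\ker S$.) The a priori estimate itself comes from the Bochner-Kodaira-Nakano identity for $(n,1)$-forms with values in $P$,
\[
\Vert T^* v\Vert^2_{L^2} + \Vert S v\Vert^2_{L^2} \geq \int_M \langle [\sqrt{-1}F_{h_P}, \Lambda_{\omega_M}] v, v\rangle\, dV_{\omega_M},
\]
valid for smooth compactly supported $v\in \Omega^{n,1}(P)$. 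On $(n,1)$-forms the curvature operator $[\sqrt{-1}F_{h_P}, \Lambda_{\omega_M}]$ acts pointwise by the sum of eigenvalues of $\sqrt{-1}F_{h_P}$ relative to $\omega_M$ taken over the antiholomorphic index, so the hypothesis $\sqrt{-1}F_{h_P}\geq c\omega_M$ yields the pointwise lower bound $\langle [\sqrt{-1}F_{h_P},\Lambda_{\omega_M}]v,v\rangle \geq c|v|^2$. For $v\in \ker S$ this gives $\Vert T^* v\Vert^2 \geq c \Vert v\Vert^2$, and Cauchy-Schwarz applied to $\langle \alpha, v\rangle$ then yields the displayed a priori estimate.

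The main technical obstacle, and the sole place where the assumed existence of a complete K\"ahler metric on $M$ enters, is density: one needs smooth compactly supported $(n,1)$-forms to be dense in $\mathrm{Dom}(T^*)\cap \mathrm{Dom}(S)$ in the graph norm, so that the pointwise Bochner-Kodaira estimate transfers to all $v$ in the domain. This is handled by the familiar Andreotti-Vesentini truncation: one fixes an exhaustion $\{K_j\}$ by compact sets and a sequence of cutoff functions $\chi_j$ with $\chi_j = 1$ on $K_j$ and $|d\chi_j|$ bounded with respect to the complete background metric (independent of $j$), and then verifies that $\chi_j v \to v$ in graph norm. Note that the auxiliary complete metric is used only in this density step, whereas the geometric estimates are carried out with respect to the original metric $\omega_M$, which need not be complete. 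After this density step the Hahn-Banach extraction of $u$ is routine, completing the proof.
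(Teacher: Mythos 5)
The paper does not prove Theorem~\ref{thm:Hormander}; it cites \cite[Theorem 6.1]{Demailly} and moves on, so there is no in-paper argument to compare against. Your sketch is the standard H\"ormander--Andreotti--Vesentini argument, and its skeleton is correct and matches the reference: realize $\bar\partial$ as closed, densely defined operators $T,S$ on $L^2$-spaces; use the Bochner--Kodaira--Nakano identity (no torsion terms since $\omega_M$ is K\"ahler) and the eigenvalue computation showing $\langle[\sqrt{-1}F_{h_P},\Lambda_{\omega_M}]v,v\rangle\geq c|v|^2$ on $(n,1)$-forms; restrict to $\ker S$, noting $(\ker S)^\perp\subset\ker T^*$ since $ST=0$; deduce the a priori estimate; then apply Riesz/Hahn--Banach to the functional $T^*v\mapsto\langle\alpha,v\rangle$.

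The one place where the argument as written does not go through is the density step, and this is where the gap lies. You truncate by cutoffs $\chi_j$ whose gradient is controlled in the auxiliary complete metric $\widehat\omega$ and assert $\chi_j v\to v$ in graph norm. But the graph norm is computed in $L^2(\omega_M)$, and the error term is $\Vert\bar\partial\chi_j\wedge v\Vert_{L^2(\omega_M)}$, which is bounded pointwise by $|\bar\partial\chi_j|_{\omega_M}\,|v|_{\omega_M}$. Near the ideal boundary $|\bar\partial\chi_j|_{\omega_M}$ may be enormous precisely because $\omega_M$ is incomplete and can be much smaller than $\widehat\omega$ there, so control of $|d\chi_j|_{\widehat\omega}$ gives nothing in the $\omega_M$-graph norm. (Separately, you said $|d\chi_j|_{\widehat\omega}$ is bounded independently of $j$; for the Gaffney-type cutoff one needs $\sup_M|d\chi_j|_{\widehat\omega}\to 0$, but that is a smaller slip.) What Demailly actually does is regularize the metric: set $\omega_\epsilon=\omega_M+\epsilon\widehat\omega$, which is complete, prove the estimate for each $\omega_\epsilon$ where the cutoff argument is legitimate, and let $\epsilon\to 0$. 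The passage to the limit hinges on the monotonicity properties special to $(n,q)$-forms, namely that $\Vert\cdot\Vert_{L^2(\gamma)}$ decreases and $\langle[\Theta,\Lambda_{\gamma}]^{-1}u,u\rangle\,dV_\gamma$ decreases when $\gamma$ increases, so the solutions $u_\epsilon$ have uniformly bounded $L^2(\omega_M)$-norm and a weak limit furnishes $u$. Your remark that the complete metric enters only in the density step is correct in spirit, but the implementation you gave short-circuits the $\omega_\epsilon$ device, and without it the density claim does not hold.
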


We will apply this result to $M=X^{reg}$, with the metric
$\omega_M=k\omega$. Note that it follows from Demailly~\cite[Theorem
0.2]{Dem82},  that $X^{reg}$ admits a complete K\"ahler
metric. For the line bundle $P$ we will take $P = L^k \otimes
K_M^{-1}$, so that an $(n,0)$-form valued in $P$ is simply a
section of $L^k$. For the metric on $P$ we take the metric induced by
the metric $h^k$ on $L^k$ whose curvature is $k\omega$, together
with the metric given by $\omega^n$ on $K_{M}$. The
curvature of $h_P$ then satisfies
\[ \sqrt{-1}F_{h_P} = k\omega + \mathrm{Ric}_{\omega} =
  (k+\lambda)\omega > \frac{1}{2}\omega_M, \]
for large enough $k$.

We will need the following $L^\infty$ and gradient estimates for
holomorphic sections of $L^k$.
\begin{prop}
  Let $f$ be a holomorphic section of $L^k$ over $M = X^{reg}$. We then
  have the following estimates
  \[ \sup_{M} |f|_{h^k} + |\nabla f|_{h^k, \omega_M} \leq
    K_1\Vert f\Vert_{L^2(M, h^k, \omega_M)}, \]
  where we emphasize that we are using the metrics $h^k$ and
  $\omega_M = k\omega$ to measure the various norms, and $K_1$ does not
  depend on $k$. 
\end{prop}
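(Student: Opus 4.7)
The plan is to derive a Bochner-type differential inequality for $u = |f|_{h^k}^2$ on $X^{reg}$, extend it to the RCD space $\hat X$, and combine it with a heat kernel upper bound (rescaled appropriately) to obtain a mean-value type estimate. The same strategy applied to $v = |\nabla f|^2_{h^k, \omega_M}$ will then yield the gradient bound.

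On $X^{reg}$, the Poincar\'e--Lelong identity gives $\sqrt{-1}\ddbar \log |f|_{h^k}^2 = -\omega_M$ where $f \neq 0$ (using $\sqrt{-1}F_{h^k} = \omega_M$). Expanding $\sqrt{-1}\ddbar u$ via the chain rule and taking the trace with respect to $\omega_M$ yields
\[ \Delta_{\omega_M} u = |\nabla f|_{h^k, \omega_M}^2 - 2n u \geq -2n u. \]
Since $X$ is normal the section $f$ extends holomorphically across the singular set, and $h^k$ has bounded local potentials, so $u$ is bounded on $\hat X$. The inequality above therefore extends distributionally to all of $\hat X$ using the cutoff functions of Lemma~\ref{lem:cutoff}, by the argument of Lemma~\ref{lem:bddharmonic}. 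Letting $P_t$ denote the heat semigroup on $(\hat X, d_{\omega_M}, \omega_M^n)$, the bound $\Delta u + 2nu \geq 0$ gives $\partial_t(e^{2nt}P_t u) \geq 0$, so $u(x) \leq e^{2n}(P_1 u)(x)$ pointwise.

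To close the $L^\infty$ estimate with a constant independent of $k$ I would exploit that, under $\omega_M = k\omega$, the heat kernels scale as $H^{\omega_M}(\cdot,\cdot,1) = k^{-n} H^\omega(\cdot,\cdot,1/k)$. Combined with the bound $H^\omega(x,y,t) \leq \bar H(t)$ on the fixed space $(\hat X, \omega)$ (obtained by passing to the limit in Theorem~\ref{thm:Hupper}, or directly from the Gaussian heat kernel estimates on non-collapsed $RCD(\lambda, 2n)$ spaces) together with the small-time blow-up $\bar H(t) \lesssim t^{-n}$, this yields $H^{\omega_M}(\cdot,\cdot,1) \leq K$ uniformly in $k$. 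Hence
\[ \sup_M u \leq e^{2n} K \Vert u\Vert_{L^1(\omega_M^n)} = K_1^2 \Vert f\Vert^2_{L^2(h^k, \omega_M)}. \]

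For the gradient $v = |\nabla f|^2_{h^k, \omega_M}$ I would repeat the argument: the Bochner formula for a holomorphic section of $L^k$, combined with $\mathrm{Ric}(\omega_M) = (\lambda/k)\omega_M$ and the curvature $\omega_M$ of $h^k$, yields $\Delta_{\omega_M} v \geq -Cv$ with $C$ independent of $k$ for $k$ large (the Ricci and bundle curvature terms are of order $\lambda/k$ and $1$ respectively). The heat semigroup argument then gives $\sup v \leq C'\Vert v\Vert_{L^1(\omega_M^n)}$, and integrating the identity $v = \Delta_{\omega_M} u + 2nu$ against the cutoffs of Lemma~\ref{lem:cutoff} (to justify $\int_{\hat X}\Delta u = 0$) gives $\Vert v\Vert_{L^1(\omega_M^n)} = 2n\Vert f\Vert^2_{L^2}$. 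The main obstacle is the uniform-in-$k$ control of $H^{\omega_M}(\cdot,\cdot,1)$: the rescaled space $(\hat X, \omega_M)$ has diameter $\sim \sqrt{k}$ and Ricci lower bound $\lambda/k$, so the naive RCD Gaussian bounds degrade in $k$; what rescues the argument is precisely that the on-diagonal blow-up $\bar H(t) \sim t^{-n}$ on the fixed space $(\hat X, \omega)$ exactly cancels the $k^{-n}$ from the volume rescaling.
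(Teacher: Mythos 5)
Your approach matches the paper's: derive $\Delta_{\omega_M}|f|^2 = |\nabla f|^2 - c_n|f|^2$ and the Bochner inequality $\Delta_{\omega_M}|\nabla f|^2 \geq -A|\nabla f|^2$ (with $A$ independent of $k$ because the Ricci bound $\lambda/k$ improves and the curvature of $h^k$ is exactly $\omega_M$), extend these across the singular set by cutoffs, and then convert to pointwise bounds via a heat semigroup/heat kernel argument whose constant does not degrade with $k$. Your explicit discussion of the scaling $H^{\omega_M}(\cdot,\cdot,1)=k^{-n}H^\omega(\cdot,\cdot,1/k)$ and of the cancellation with the $t^{-n}$ blow-up is a nice way of making transparent the assertion ``independent of $k$'' which the paper gets by citing the Gaussian bound of Jiang--Li--Zhang. (The $n$ vs.\ $2n$ coefficients are only a Laplacian convention.)

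However, there is one genuine gap. To push the inequality $\Delta_{\omega_M}v \geq -Av$ for $v=|\nabla f|^2_{h^k,\omega_M}$ across the metric singular set by the Lemma~\ref{lem:bddharmonic}-type cutoff argument, you need to know in advance that $v$ is \emph{bounded} (or at least controlled well enough for the cross terms $\int \nabla\phi_i\cdot\nabla v$ to vanish in the limit). For $u=|f|^2$ you correctly observe that normality of $X$ and boundedness of the local potentials give $u\in L^\infty$, but you simply say you would ``repeat the argument'' for $v$, without establishing $\sup|\nabla f|<\infty$. A priori $|\nabla f|$ could blow up near $\hat X\setminus X^{reg}$, in which case the cutoff argument (and the heat-semigroup comparison $v(x)\le e^A(P_1v)(x)$) is not justified. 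The paper devotes a separate preliminary paragraph to this point: locally trivializing $L$ by a nonvanishing section $s$, writing $|s|^2_h=e^{-u}$ with $u$ bounded on a small ball, applying the RCD gradient estimate (Proposition~\ref{prop:RCDgrad}) to $u$ and then to the bounded harmonic function $f/s^k$, and covering $\hat X$ by finitely many such balls. Some such qualitative finiteness step is required before your effective estimate for $v$ can be run.
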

\begin{proof}
  Note first that $f$ extends to a holomorphic section of $L^k$ over
  $X$, using that $X$ is normal. Using 
  that $\omega$ has locally bounded potentials, we have that
  $\sup_X |f|_{h^k} < \infty$. 

  Next we show that $|\nabla f|_{h^k, \omega_M} < \infty$. For any $\hat{x}\in \hat{X}$, let $x =
  \hat{\Phi}_X(\hat x)\in X$.  We can find a section
  $s\in H^0(X, L)$ and some $r > 0$ such that $s(y) \not=0$ for $y\in
  B_{\eta_{FS}}(x,r)$. The assumption that $\omega$ is a K\"ahler
  current implies that we have constants
  $r' > 0$ and $C > 0$ (depending on $\hat{x}$) such that if we write
  $|s|^2_h = e^{-u}$, then $|u| < C$ on $X^{reg} \cap
  B_{\omega_M}(\hat{x}, r')$. We have $\Delta_{\omega_M} u = n$
  on $X^{reg}\cap B_{\omega_M}(\hat{x}, r'/2)$, and since $u$ is
  bounded, this equation extends to $B_{\omega_M}(\hat{x}, r'/2)$ by
  Lemma~\ref{lem:bddharmonic} and Lemma~\ref{lem:cutoff}. 
  The gradient estimate in Proposition~\ref{prop:RCDgrad} then implies that $|\nabla u| < C_1$ on
  $B_{\omega_M}(\hat{x}, r'/2)$. This implies that $|\nabla s| <
  C_2$ on $B_{\omega_M}(\hat{x}, r'/2)$. If $f$ is any holomorphic
  section of $L^k$, then on $B_{\omega_M}(\hat{x}, r'/2)$ the ratio
  $f/s^k$ is a bounded harmonic function, so using the gradient
  estimate again, together with the bounds for $s$, we find that
  $|\nabla f| < C_3$ on $B_{\omega_M}(\hat{x}, r'/4)$. We can cover
  $\hat{X}$ with finitely many ball of this type, showing that
  $|\nabla f|_{h^k, \omega_M} < \infty$ globally.

  We can obtain the effective estimates claimed in the proposition as
  follows. Since the curvature of $h^k$ is $\omega_M$, on $M$ we have
  \[\label{eq:Deltaf2} \Delta_{\omega_M} |f|^2_{h^k} = |\nabla f|^2_{h^k,
     \omega_M} - n |f|^2_{h^k}. \]
  Let $\phi_i$ denote cutoff functions as in
  Lemma~\ref{lem:cutoff}. We have, omitting the subscripts, 
  \[ \int_{M} \phi_i^2 |\nabla f|^2 \omega_M^n &=
    \int_{M} \phi_i^2 (\Delta |f|^2 + n |f|^2)\, \omega_M^n
    \\
    &= \int_{M} (-4\phi_i |f| \nabla \phi_i\cdot \nabla |f| +
    \phi_i^2n |f|^2)\, \omega_M^n \\
    &\leq \int_{M} \left( \frac{1}{2} \phi_i^2 |\nabla f|^2 + 8
        |\nabla\phi_i|^2 |f|^2 + \phi_i^2 n |f|^2\right)
      \,\omega_M^n. 
    \]
    Letting $i\to\infty$, and using that $|f|\in L^\infty$, we get
    \[ \int_{M} |\nabla f|^2\, \omega_M^n \leq
      2n\int_{M} |f|^2\, \omega_M^n. \]

    We also have the following Bochner type formula on $M$ (see e.g. La
    Nave-Tian-Zhang~\cite[Lemma 3.1]{LNTZ}):
    \[ \label{eq:Deltadf2} \Delta |\nabla f|^2 \geq \mathrm{Ric}_{\omega_M}(\nabla f,
      \nabla f) - (n+2) |\nabla f|^2 \geq -(n+2+|\lambda|) |\nabla f|^2, \]
    where we are using the metrics $h^k, \omega_M$ as above.

    Both \eqref{eq:Deltaf2} and \eqref{eq:Deltadf2} are of the form
    \[ \label{eq:Dvineq} \Delta v \geq - Av, \]
    where $v$ is a smooth $L^\infty$ function on $M$. We can
    argue using the cutoff functions $\phi_i$, as in the proof of
    Lemma~\ref{lem:bddharmonic}, to show that
    $v$ satisfies this differential inequality on all of $\hat{X}$ in
    a weak sense, i.e. for any Lipschitz test function $\rho \geq 0$ we have
    \[ \int_{M} (-\nabla \rho\cdot \nabla v + A\rho v)\, \omega_M^n \geq
      0. \]
    Using this, together with estimates for the heat kernel on
    $\hat{X}$, we can obtain the required $L^\infty$ bound for $v =
    |f|^2$ and $v = |\nabla f|^2$. More precisely, using \cite[Theorem
    1.2]{JLZ}, together with the RCD property in
    Proposition~\ref{prop:RCD},  we obtain an $L^2$-bound for the heat kernel
    $H(x,y,1)$ on $M$, independently of $k$. Using
    \eqref{eq:Dvineq}, for any $x\in M$ we have
    \[ \frac{d}{dt} \int_M v(y) H(x, y,t)\, \omega_M^n(y) =
      \int_M v(y) \Delta_y H(x,y,t)\, \omega_M^n(y) \geq -A \int_M
      v(y)H(x,y,t)\, \omega_M^n(y),  \]
    so
    \[ v(x) \leq e^A \int_M v(y) H(x,y,1)\, \omega_M^n(y) \leq e^A
      C \Vert v\Vert_{L^2}, \]
    as required. 
\end{proof}

In order to show that sections of $L^k$ separate points of
$\hat{X}$ for large $k$ (and therefore also
for $k=1$), we follow
the approach of Donaldson-Sun~\cite{DS1}, constructing suitable
sections of $L^k$ using H\"ormander's $L^2$-estimate. For 
this the basic ingredient in \cite{DS1} is to consider a tangent
cone $Z$ of $\hat{X}$ at $x$, and use that the regular part of $Z$
is a K\"ahler cone, while at the same time the singular set can be
excised by a suitable cutoff function. The main new difficulty in our
setting is that along the pointed convergence of a sequence of
rescalings
\[ \label{eq:tanconelimit} (\hat{X}, \lambda_i d_{\hat{X}}, x) \to (Z,
  d_Z, o),\]
with $\lambda_i\to \infty$ we do
not know that compact subsets $K\subset Z^{reg}$ of the (metric)
regular set in $Z$ are obtained as smooth limits of subsets of the (complex
analytic) regular set $X^{reg}$. For example,  a priori it
may happen that along
the convergence in \eqref{eq:tanconelimit}, even if
$Z=\mathbb{R}^{2n}$, the singular set
$X\setminus X^{reg}$ converges to a dense subset of $Z$. This
is similar to the issue dealt with in Chen-Donaldson-Sun~\cite{CDS3},
but in that work it is used crucially that the singular spaces
considered are limits of smooth manifolds with lower Ricci bounds. 

To deal with this issue in our setting,  we  exploit the fact  that
$X\setminus X^{reg}$ is locally 
    contained in the zero set of holomorphic functions, which also
    define harmonic functions on the RCD space $\hat{X}$.  Crucially,
    these functions have a bound on their order of vanishing
    (Lemma~\ref{lem:finitevanish}), which can be used to control the
    size of the zero set at different scales, at least on balls that
    are sufficiently close to a Euclidean ball. This can be used to
    show that balls in $\hat{X}$ that are almost Euclidean are
    contained in $X^{reg}$ (Proposition~\ref{prop:almostreg31}). This is the main new
    ingredient in our argument. 
      Given this, we can closely follow the arguments in
      Donaldson-Sun~\cite{DS1} or \cite{LSz1} to
    construct holomorphic sections of $L^k$.

Let us write $\Gamma = X\setminus X^{reg}$ for the algebraic singular set. 
Observe that $\Gamma$ can locally
be cut out by holomorphic functions. Therefore, we can cover $X$
with open sets $U_k'$ and we have nonzero holomorphic functions $s_k$
on $U_k'$ such that $\Gamma\cap U_k' \subset
s_k^{-1}(0)$. We can assume that the $s_k$ are bounded, and that we
have relatively compact open sets $U_k \ssubset U_k'$ that still cover
$X$. We let $\hat{U}_k, \hat{U}_k'$ be the corresponding open
sets pulled back to $\hat{X}$. Using Lemma~\ref{lem:cutoff}, we can 
extend the $s_k$ to complex valued harmonic functions on
$\hat{X}$, which vanish along $\Gamma$. Our first
task will be to show that we have a bound for the order of vanishing
of the $s_k$ at each point. Note first that by the assumption that
$\omega$ is a K\"ahler current, there exists an $r_0 > 0$ such that if
$p\in \hat{U}_k$, then $B(p, r_0) \subset \hat{U}_k'$. Here,
and below, a ball $B(p,r)$ always denotes the metric ball using the
metric $d_{\hat X}$ on $\hat{X}$ induced by $d_{\omega}$ on
$X^{reg}$. 

\begin{lemma}\label{lem:finitevanish}
  There are constant $c_1, N > 0$, depending on $(X, \omega)$, such
  that for any $\hat{x} \in \hat{U}_k$ and $r\in (0,r_0)$, we have
  \[ \int_{B(\hat{x}, r)} |s_k|^2\, \omega^n \geq c_1 r^N,\]
  for all $r < r_0$.
\end{lemma}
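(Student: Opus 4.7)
The plan is to establish a uniform doubling estimate for
\[
  D(\hat{x}, r) := \int_{B(\hat{x}, r)} |s_k|^2\, \omega^n,
\]
namely, after possibly shrinking $r_0$, that there exists $C_0 > 0$ with $D(\hat{x}, r) \leq C_0\, D(\hat{x}, r/2)$ for all $\hat{x}\in \hat{U}_k$ and $r\in(0,r_0)$. Iterating gives $D(\hat{x}, r) \geq c(r/r_0)^{N} D(\hat{x}, r_0)$ with $N = \log_2 C_0$, so the statement follows once $D(\hat{x}, r_0)$ is bounded below uniformly in $\hat{x}\in \hat{U}_k$. This uniform lower bound comes from a compactness argument: $s_k$ is a nonzero bounded holomorphic function on $U_k'$, hence does not vanish on any open subset, and $\hat{x}\mapsto D(\hat{x}, r_0)$ is continuous and strictly positive on the compact closure $\overline{\hat{U}_k} \subset \hat{U}_k'$.

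I prove the doubling by a blow-up argument. If it failed, there would be sequences $\hat{x}_i \in \hat{U}_k$ and $r_i \in (0, r_0)$ with $D(\hat{x}_i, r_i)/D(\hat{x}_i, r_i/2) \to \infty$. Passing to a subsequence, $\hat{x}_i \to \hat{x}_\infty$ and $r_i \to r_\infty\in [0, r_0]$; continuity of $D$ in both variables rules out $r_\infty > 0$, so $r_i \to 0$. The rescaled pointed spaces
\[
  (\hat{X},\, r_i^{-1}d_{\hat{X}},\, \hat{x}_i,\, r_i^{-2n}\omega^n)
\]
are non-collapsed $RCD(\lambda r_i^2, 2n)$-spaces with $\lambda r_i^2 \to 0$, so by the De Philippis--Gigli compactness theorem~\cite{DPG} they subconverge in pointed measured Gromov--Hausdorff sense to a non-collapsed $RCD(0, 2n)$-space $(Z, d_Z, o, \mu_Z)$. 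Since $X^{reg}$ is smoothly embedded in $\hat{X}$ and carries the smooth K\"ahler metric $\omega$, this convergence is smooth on the regular locus $Z^{reg}$, whose complement has codimension at least two by Proposition~\ref{prop:nocodim1}.

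Normalizing $\tilde{s}_i = s_k/\alpha_i$ with $\alpha_i^2 = D(\hat{x}_i, r_i)/|B(\hat{x}_i, r_i)|$ gives, in the rescaled measure, $\fint_{B_1(\hat{x}_i)} |\tilde{s}_i|^2 = 1$, while the failure of doubling together with uniform Bishop--Gromov volume comparison in non-collapsed RCD spaces forces $\fint_{B_{1/2}(\hat{x}_i)} |\tilde{s}_i|^2 \to 0$. Each $\tilde{s}_i$ is bounded harmonic on $\hat{X}$ and holomorphic on $X^{reg}$; Proposition~\ref{prop:RCDgrad} together with a Moser-type $L^\infty$-estimate for harmonic functions on non-collapsed RCD spaces yields uniform $L^\infty$ and Lipschitz bounds for the $\tilde{s}_i$ on the rescaled unit ball, and Arzel\`a--Ascoli along the pmGH convergence extracts a subsequential limit $\tilde{s}_\infty$ on $B_1(o)$. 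By smooth convergence on $Z^{reg}$, $\tilde{s}_\infty$ is smoothly harmonic (indeed holomorphic) on $B_1(o)\cap Z^{reg}$, satisfies $\fint_{B_1(o)} |\tilde{s}_\infty|^2 = 1$, and vanishes on the open set $B_{1/2}(o)\cap Z^{reg}$. Since $Z^{reg}$ is connected (its complement has codimension at least two) and has full $\mu_Z$-measure, classical unique continuation on the smooth Riemannian manifold $Z^{reg}$ forces $\tilde{s}_\infty \equiv 0$ on $Z^{reg}$, contradicting the $L^2$-normalization.

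The main obstacle is the passage to the limit: establishing uniform $L^\infty$-bounds needed for Arzel\`a--Ascoli, and ensuring that both the $L^2$-normalization on $B_1(o)$ and the vanishing on $B_{1/2}(o)$ survive the pmGH limit of rescaled spaces with varying regular/singular strata. Both rely on elliptic theory for harmonic functions on non-collapsed RCD spaces (Proposition~\ref{prop:RCDgrad} plus Moser iteration), combined with the smooth convergence of the K\"ahler structure on the regular part of $Z$; once these analytic ingredients are in place, the unique continuation step is classical and the contradiction is immediate.
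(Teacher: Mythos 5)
There is a genuine gap in the blow-up argument, and it is precisely the circularity that this lemma is designed to avoid. In the rescaled pmGH limit $(\hat{X}, r_i^{-1}d_{\hat{X}}, \hat{x}_i) \to (Z, d_Z, o)$, you claim that ``this convergence is smooth on the regular locus $Z^{reg}$'' because ``$X^{reg}$ is smoothly embedded in $\hat{X}$.'' But the (metric) regular set $Z^{reg}$ of the limit and the (complex-analytic) regular set $X^{reg}$ of the original space are a priori unrelated. As the paper emphasizes in the discussion immediately preceding this lemma, it may happen that along a rescaled limit, even when $Z = \mathbb{R}^{2n}$, the algebraic singular set $X \setminus X^{reg}$ converges to a \emph{dense} subset of $Z$. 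In that scenario there is no open set of $Z$ on which the convergence is smooth, $Z^{reg}$ need not carry a smooth K\"ahler structure obtained as a limit of the $\omega$-metrics, and the limit function $\tilde{s}_\infty$ need not be smooth or holomorphic on $Z^{reg}$. Classical unique continuation then has nothing to bite on. Establishing that almost-Euclidean balls of $\hat{X}$ lie in $X^{reg}$ is exactly the content of Proposition~\ref{prop:almostreg1} and Proposition~\ref{prop:almostreg31}, and Lemma~\ref{lem:finitevanish} is the key new input to those results. Using their conclusion to prove the lemma is circular.

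There is a secondary concern as well: even granting smooth convergence on $Z^{reg}$, your uniform Lipschitz and $L^\infty$ bounds on $B_1(\hat{x}_i)$ via Proposition~\ref{prop:RCDgrad} require $L^1$ control on the larger ball $B_2(\hat{x}_i)$, while your normalization only pins down the $L^2$-average on $B_1(\hat{x}_i)$; without a growth bound on the bigger ball you cannot yet invoke Arzel\`a--Ascoli. (This type of growth control is what Lemma~\ref{lem:3ann} supplies later, but only once Lemma~\ref{lem:finitevanish} is available.)

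The paper's actual proof sidesteps all of this and is far more elementary: it uses only (i) the non-collapsing volume lower bound $\mathrm{vol}\,B(\hat{x},r) \geq \nu r^{2n}$ from the RCD property, (ii) the hypothesis $\omega^n = F\eta_{FS}^n$ with $F \in L^p$ for some $p>1$, H\"older's inequality, and (iii) the local integrability of $|s_k|^{-\epsilon}$ for small $\epsilon>0$, to show that the $\omega^n$-measure of the sublevel set $\{|s_k|<t\}$ in $U_k'$ is $\leq C_4 t^\alpha$. Choosing $t_r \sim r^{2n/\alpha}$ so that this sublevel set occupies less than half of $B(\hat{x},r)$ forces $|s_k| \geq t_r$ on a set of measure $\geq \tfrac12 \nu r^{2n}$, which gives $\int_{B(\hat{x},r)} |s_k|^2\,\omega^n \geq \tfrac12 \nu\, c_5^2\, r^{4n/\alpha + 2n}$. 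No harmonic function theory, no doubling estimate, no blow-up limit, and no unique continuation are needed. I would encourage you to look for such a direct measure-theoretic argument rather than a compactness/blow-up argument, since the latter inevitably invokes structure on the limit space that is not yet available at this point in the paper.
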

\begin{proof}
 First note that since $\hat{X}$ is a non-collapsed RCD space, we
  have a constant $\nu > 0$ such that $\mathrm{vol}\, B(\hat{x}, r) >
  \nu r^{2n}$ for all $r < 1$. At the same time we can bound the
  volume of sublevel sets $U_k'\cap \{|s_k| < t\}$ from above, using
  the assumptions on $\omega$. Indeed, on $U_k'$ we have $\omega^n =
  F\eta^n_{FS}$, and $F\in L^p(X, \eta^n_{FS})$ for some $p > 1$. 
It follows that for any $t > 0$ we have
  \[ \mathrm{vol} (U_k'\cap \{ |s_k| < t\}, \omega^n) &= \int_{U_k'\cap \{|s_k| < t\}}
    \omega^n \\
    &= \int_{U_k'\cap \{|s_k| < t\}} F\, \eta_{FS}^n \\
    &\leq C_1 \mathrm{vol}(U_k'\cap \{|s_k| < t\}, \eta_{FS}^n)^{1/p'}
    \left(\int_{U_k'} F^p\, \eta_{FS}^n\right)^{1/p} \\
    &\leq C_2  \mathrm{vol}(U_k'\cap \{|s_k| < t\}, \eta_{FS}^n)^{1/p'}, \]
  for suitable constants $C_1, C_2$ independent of $t$, and $p'$ is the
  conjugate exponent of $p$. Since $|s_k|^{-\epsilon}\, \eta_{FS}^n$ is integrable for
  some $\epsilon > 0$, it follows that we have a bound
  \[ \mathrm{vol}(U_k'\cap \{|s_k| < t\}, \eta_{FS}^n) \leq C_3 t^\epsilon, \]
  and so in sum we have
  \[ \mathrm{vol} (U_k'\cap \{ |s_k| < t\}, \omega^n)  \leq C_4
    t^{\alpha},\]
  for some $C_4, \alpha > 0$ independent of $t$. Given a small $r > 0$
  such that $B(\hat{x}, r)\subset \hat{U}_k'$, choose $t_r$ such that
  \[ C_4 t_r^\alpha = \frac{1}{2} \nu r^{2n}, \]
  i.e.
  \[ t_r = \left(\frac{\nu}{2C_4}\right)^{1/\alpha} r^{2n/\alpha} =
  c_5 r^{2n\alpha^{-1}}, \]
  for suitable $c_5> 0$. By our estimates for the volumes, we
  then have
  \[ \mathrm{vol}( B(\hat{x}, r) \cap \{|\hat{s}_k| \geq t_r\}) \geq
    \frac{1}{2}\nu r^{2n}, \]
  and so
  \[ \label{eq:L2lower}
    \int_{B(\hat{x}, r)} |\hat{s}_k|^2\, \omega^n &\geq  \frac{c_5^2
      r^{4n\alpha^{-1}}}{2} \nu r^{2n} = c_1 r^N,\]
  for some $c_1, N > 0$, independent of $r$, as required.
\end{proof}

Next we need a version of the three annulus lemma for almost Euclidean
balls, similar to \cite[Theorem 0.7]{Ding}.
\begin{lemma}\label{lem:3ann}
  For any $\mu > 0$, $\mu\not\in\mathbb{Z}$, there is an $\epsilon >
  0$ depending on $\mu, n$ with the following property. Suppose that
  $B(p,1)$ is a unit ball in a noncollapsed $RCD(-1,2n)$-space such
  that
  \[ d_{GH}(B(p,1), B(0_{\mathbb{R}^{2n}},1)) < \epsilon, \]
  where $0_{\mathbb{R}^{2n}}$ denotes the origin in Euclidean space. 
  Let $u : B(p,1)\to\mathbb{C}$ be a harmonic function such that
  \[ \left(\fint_{B(p,1/2)} |u|^2 \right)^{1/2} \geq 2^\mu
    \left(\fint_{B(p,1/4)} |u|^2\right)^{1/2}. \]
  Then
  \[ \left(\fint_{B(p,1)} |u|^2 \right)^{1/2} \geq 2^\mu \left(\fint_{B(p,1/2)}
      |u|^2\right)^{1/2}. \]
\end{lemma}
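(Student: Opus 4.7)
The plan is to argue by contradiction and compactness. Suppose the lemma fails for some $\mu > 0$ with $\mu \notin \mathbb{Z}$. Then there is a sequence of non-collapsed $RCD(-1, 2n)$-spaces $(Z_i, d_i, \mathfrak{m}_i)$ with marked points $p_i$ such that $d_{GH}(B(p_i,1), B(0_{\mathbb{R}^{2n}},1)) \to 0$, together with harmonic functions $u_i : B(p_i,1)\to\mathbb{C}$ satisfying the squared forms of the assumed inequality and the negation of the conclusion. After rescaling so that $\fint_{B(p_i,1/2)} |u_i|^2 = 1$, these become
\[
\fint_{B(p_i,1/4)}|u_i|^2 \leq 4^{-\mu}, \qquad \fint_{B(p_i,1)}|u_i|^2 < 4^\mu.
\]
In particular the $u_i$ are uniformly $L^2$-bounded on $B(p_i,1)$. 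Applying Proposition~\ref{prop:RCDgrad} to the real and imaginary parts on balls of radius slightly less than $1$ gives uniform Lipschitz bounds for $u_i$ on $B(p_i, 7/8)$.

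Next I would invoke the stability of harmonic functions and of the Cheeger-Dirichlet energy under non-collapsed pointed mGH convergence of $RCD$ spaces (as developed by Gigli-Mondino-Savar\'e, Ambrosio-Honda-Tewodrose, and Honda~\cite{Honda}): passing to a subsequence, $u_i$ converges uniformly on compact subsets of the limit ball to a harmonic function $u_\infty$ on $B(0_{\mathbb{R}^{2n}},1)$. Since $u_i$ is equicontinuous on $B(p_i, 3/4)$ and non-collapsing gives uniform convergence of volumes of concentric balls, the averaged integrals over $B(\cdot, 1/4)$ and $B(\cdot, 1/2)$ pass exactly to the limit, while for the outer ball $B(\cdot, 1)$ lower semicontinuity of the $L^2$-norm under mGH convergence of measures yields an upper bound. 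Thus
\[
\fint_{B(0,1/2)}|u_\infty|^2 = 1, \qquad \fint_{B(0,1/4)}|u_\infty|^2 \leq 4^{-\mu}, \qquad \fint_{B(0,1)}|u_\infty|^2 \leq 4^\mu.
\]

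Now I use the classical orthogonal decomposition of harmonic functions on the Euclidean ball into homogeneous harmonic polynomials, $u_\infty = \sum_{k\geq 0} h_k$ with $h_k$ of degree $k$. Setting $B_k = \fint_{B(0,1/2)}|h_k|^2$, the scaling $\fint_{B(0,r)}|h_k|^2 = (2r)^{2k} B_k$ and orthogonality give
\[
\sum_k B_k = 1, \qquad \sum_k 4^{-k} B_k \leq 4^{-\mu}, \qquad \sum_k 4^k B_k \leq 4^\mu.
\]
By Cauchy-Schwarz, $1 = \bigl(\sum_k B_k\bigr)^2 \leq \bigl(\sum_k 4^k B_k\bigr)\bigl(\sum_k 4^{-k} B_k\bigr) \leq 1$, so equality holds throughout. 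This forces only one index $k_0$ to have $B_{k_0} \neq 0$, with $B_{k_0} = 1$, and then $4^{k_0} \leq 4^\mu$ and $4^{-k_0} \leq 4^{-\mu}$ give $k_0 = \mu$. Since $\mu \notin \mathbb{Z}$, this is a contradiction.

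The main obstacle is justifying the limiting step cleanly: one needs that uniformly $L^2$-bounded harmonic functions on a sequence of unit balls converging in non-collapsed mGH sense to a Euclidean ball subconverge to a harmonic function in the limit, with convergence of averaged $L^2$-integrals on interior concentric balls. This is by now standard in the $RCD$ literature (via Mosco convergence and the gradient estimate of Jiang cited as Proposition~\ref{prop:RCDgrad}), but one must quote the appropriate stability theorem carefully; once this is granted, the remainder of the argument is elementary.
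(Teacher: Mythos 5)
Your proof is correct and it follows exactly the route the paper sketches: argue by contradiction, use compactness of non-collapsed $RCD$ spaces and stability of harmonic functions under pointed measured Gromov--Hausdorff convergence to extract a limiting harmonic function on the Euclidean ball, then decompose into homogeneous harmonic polynomials and use Cauchy--Schwarz (which saturates to force homogeneity of integer degree $k_0 = \mu$, contradicting $\mu \notin \mathbb{Z}$). The paper's proof is a one-line pointer to Ding and to the integer-degree fact, and your write-up fills in precisely that argument, so the two are essentially identical in substance.

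One small technical remark: Proposition~\ref{prop:RCDgrad} applied on $B(p_i,2R)\subset B(p_i,1)$ only gives Lipschitz bounds on the inner ball $B(p_i,R)$ with $R<1/2$; to get uniform Lipschitz bounds on a ball of radius close to $1$ you should cover it by small balls whose doubles stay inside $B(p_i,1)$ and use non-collapsing to control the local $L^1$-averages by the global $L^2$-norm. Also, as you note, the passage to the limit of the averaged integrals on $B(\cdot,1/2)$ and $B(\cdot,1/4)$ uses the uniform convergence together with convergence of the volumes; for $B(\cdot,1)$ you only need lower semicontinuity of the energy in the limit, which is exactly what your estimate $\fint_{B(0,1)}|u_\infty|^2 \leq 4^\mu$ uses. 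These are standard points in the $RCD$ stability literature and do not affect the correctness of the argument.
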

\begin{proof}
  The proof is by contradiction, similarly to \cite{Ding},
  based on the fact that on the
  Euclidean space $\mathbb{R}^{2n}$ every homogeneous harmonic
  function has integer degree.
\end{proof}

Combining the previous two results, we have the following, controlling
the decay rate of the defining functions $\hat{s}_k$ around almost
regular points. 
\begin{lemma}\label{lem:growthbound30}
  There exists an $\epsilon_0, r_0 > 0$, depending on $(X,
  \omega)$, such that if $\hat{x}\in \hat{U}_k$ and for some
  $r_1\in (0,r_0)$ we have 
  \[ \label{eq:dGH1} d_{GH}(B(\hat{x}, r_1), B(0_{\mathbb{R}^{2n}}, r_1)) <
    r_1\epsilon_0, \]
  then
  \[ \label{eq:growthbound21} \limsup_{r\to 0} \frac{\fint_{B(\hat{x},
        r)} |\hat{s}_k|^2\, \omega^n}{\fint_{B(\hat{x}, r/2)} |\hat{s}_k|^2\, \omega^n} \leq
    2^{2N}, \]
  for the $N$ in Lemma~\ref{lem:finitevanish}. 
\end{lemma}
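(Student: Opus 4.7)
The strategy is a contradiction argument combining the three annulus lemma (Lemma~\ref{lem:3ann}) with the finite vanishing bound (Lemma~\ref{lem:finitevanish}). Heuristically, if the local $L^2$ averages of $\hat s_k$ at scales $r$ and $r/2$ had ratio exceeding $2^{2N}$ for arbitrarily small $r$, then iterating Lemma~\ref{lem:3ann} upward would force exponential growth on every scale up to $r_1$, contradicting the global sup-bound on $s_k$ in view of the lower bound from Lemma~\ref{lem:finitevanish}.

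The first step is to ensure that the almost-Euclidean hypothesis \eqref{eq:dGH1} propagates to every smaller scale. Fix any $\mu > N$ with $\mu\notin\mathbb{Z}$, and let $\epsilon=\epsilon(\mu,n)$ be the constant produced by Lemma~\ref{lem:3ann}. Since $(\hat X, d_{\hat X}, \omega^n)$ is non-collapsed $RCD(\lambda,2n)$, Bishop--Gromov monotonicity together with Colding's volume convergence show that by choosing $\epsilon_0, r_0$ sufficiently small (depending on $\mu, n, \lambda$) the renormalized volume ratios $\omega_{2n}^{-1} s^{-2n}\mathrm{vol}(B(\hat x, s))$ lie in $(1-o(1), 1]$ on the entire range $s\in(0, r_1]$. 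The almost-rigidity theorem of De~Philippis--Gigli then upgrades this to $d_{GH}(B(\hat x, s), B(0, s)) < \epsilon s$ for every $s\leq r_1$, so Lemma~\ref{lem:3ann} is available at every sub-scale.

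Now assume, for contradiction, that the limsup exceeds $2^{2N}$, and choose $\mu > N$ as above so that $2^{2\mu}$ is still strictly below the limsup. Then there is a sequence $r_j\downarrow 0$ with
\[ \fint_{B(\hat x, r_j)} |\hat s_k|^2\,\omega^n \;\geq\; 2^{2\mu} \fint_{B(\hat x, r_j/2)} |\hat s_k|^2\,\omega^n. \]
Since $\hat s_k$ is harmonic on $\hat X$, applying Lemma~\ref{lem:3ann} to the rescaling of $B(\hat x, 2r_j)$ to unit radius yields the same estimate at scale $2r_j$; iterating upward, for every integer $i\geq 0$ with $2^{i} r_j\leq r_1$,
\[ \fint_{B(\hat x, 2^{i} r_j)} |\hat s_k|^2\,\omega^n \;\geq\; 2^{2\mu i}\fint_{B(\hat x, r_j)} |\hat s_k|^2\,\omega^n. \]

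To close the contradiction, combine the finite vanishing bound (Lemma~\ref{lem:finitevanish}) with the non-collapsed volume upper bound $\mathrm{vol}(B(\hat x, r_j))\leq C r_j^{2n}$, giving $\fint_{B(\hat x, r_j)} |\hat s_k|^2\,\omega^n \geq c_1' r_j^{N-2n}$. Choosing the largest $i$ with $2^{i}r_j\leq r_1$, so that $2^{2\mu i}\asymp (r_1/r_j)^{2\mu}$, and bounding the left-hand side above by $\sup_{\hat U_k'}|s_k|^2\leq M^2$, one obtains
\[ M^2 \;\geq\; C r_1^{2\mu} r_j^{N-2n-2\mu}. \]
Since $\mu > N$ the exponent $N-2n-2\mu$ is negative, so the right-hand side blows up as $r_j\to 0$, a contradiction. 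The main technical point is the first step, propagating the almost-Euclidean condition to all sub-scales via RCD volume monotonicity and almost-rigidity; once this is in place the iteration and the final contradiction are routine.
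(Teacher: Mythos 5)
Your overall strategy is the same as the paper's: propagate the almost-Euclidean hypothesis to every sub-scale via volume monotonicity and almost rigidity, iterate Lemma~\ref{lem:3ann} upward to scale $r_1$, and close with the lower bound from Lemma~\ref{lem:finitevanish}. The gap is the choice of $\mu$. You take $\mu > N$, but it must be $\mu < N$, and this is not a cosmetic slip: it is precisely where the constant $2^{2N}$ in the conclusion comes from. For any fixed non-integer $\mu$ with $\mu > (N-2n)/2$ (which is all that is needed to make the exponent $N-2n-2\mu$ in your final display negative), the argument establishes
\[ \limsup_{r\to 0}\frac{\fint_{B(\hat x, r)} |\hat s_k|^2\,\omega^n}{\fint_{B(\hat x, r/2)} |\hat s_k|^2\,\omega^n}\;\leq\; 2^{2\mu}. \]
With $\mu > N$ this is $\limsup \leq 2^{2\mu} > 2^{2N}$, strictly weaker than the lemma claims. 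The paper fixes $\mu\in(N/2,N)$, $\mu\notin\mathbb{Z}$: the upper restriction $\mu<N$ gives $2^{2\mu}\leq 2^{2N}$, while $\mu>N/2$ (a fortiori $\mu>(N-2n)/2$) is what makes the final exponent negative. Requiring $\mu>N$ buys you nothing and costs you the statement.

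There is a related circularity in your contradiction framing. You fix $\mu$ first and derive $\epsilon_0, r_0$ from $\epsilon(\mu,n)$; later, assuming $\limsup > 2^{2N}$, you re-choose $\mu$ so that $2^{2\mu}$ lies strictly below the limsup. But $\epsilon_0, r_0$ must depend only on $(X,\omega)$, whereas the limsup depends on the point $\hat{x}$; if the limsup is just barely above $2^{2N}$ the re-chosen $\mu$ is forced arbitrarily close to $N$, and the constant $\epsilon(\mu,n)$ from Lemma~\ref{lem:3ann} is not controlled in that limit, so the $\epsilon_0$ you already fixed no longer suffices. The clean version, which the paper uses and which needs no contradiction at all, is: fix once and for all a non-integer $\mu\in(N/2,N)$, determine $\epsilon_0, r_0$ from it, and show directly that the three-annulus condition at scale $r$ forces a lower bound $r\geq r_*>0$; then for $r<r_*$ the ratio is $<2^{2\mu}\leq 2^{2N}$. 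The remaining ingredients in your write-up (the Bishop--Gromov/almost-rigidity propagation, the conversion of Lemma~\ref{lem:finitevanish} into an average bound with the $r^{-2n}$ volume factor, and the upward iteration) are correct and match the paper.
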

\begin{proof}
  Fix $\mu \in (N/2, N)$ such that $\mu\not\in \mathbb{Z}$. If $\epsilon_0$ and $r_0$
  are sufficiently small (depending on $\mu$),
  then the inequality \eqref{eq:dGH1} implies
  that for any $r \leq r_1$ we have
  \[ d_{GH}(B(\hat{x}, r), B(0_{\mathbb{R}^{2n}}, r)) <
    r\epsilon, \]
  for the $\epsilon$ in Lemma~\ref{lem:3ann}, and so the conclusion of
  that Lemma holds. It follows that if 
  \[\label{eq:a10} \left(\fint_{B(\hat{x}, r)} |\hat{s}_k|^2\, \omega^n\right)^{1/2} \geq
    2^\mu \left( \fint_{B(\hat{x}, r/2)} |\hat{s}_k|^2\, \omega^n
    \right)^{1/2}, \]
  for some $r\leq r_1$, then applying Lemma~\ref{lem:3ann} inductively, we have
  \[ \left(\fint_{B(\hat{x}, 2^j r)} |\hat{s}_k|^2\, \omega^n \right)^{1/2} \geq
    2^{j\mu} \left( \fint_{B(\hat{x}, r/2)} |\hat{s}_k|^2\, \omega^n
    \right)^{1/2}, \]
  as long as $2^jr \leq r_1$. Given any $r \leq r_1$, if we let $\bar
  j$ denote the largest $j$ such that $2^jr\leq r_1$, then we obtain
  \[ \left(\fint_{B(\hat{x}, r/2)} |\hat{s}_k|^2\, \omega^n\right)^{1/2} \leq
    2^{-\bar j\mu}C, \]
  where $C$ is independent of $r$, but depends on the $L^2$-norm of
  $\hat{s}_k$ on $B(\hat{x}, r_1)$. Applying
  Lemma~\ref{lem:finitevanish}
  we then have
  \[ 2^{-\bar j \mu} C\geq c_1^{1/2} (r/2)^{N/2}. \]
  Since $2^{\bar j+1} r > r_1$, it follows that $2^{-\bar j \mu} <
  (2r/r_1)^\mu$, so
  \[ c_1^{1/2}(r/2)^{N/2} < (2r/r_1)^\mu C. \]
  Since $\mu > N/2$, this inequality implies a lower bound
  for $r$ satisfying \eqref{eq:a10}. The required conclusion
  \eqref{eq:growthbound21} follows. 
\end{proof}

Using this result, we will show that almost Euclidean balls are
contained in the complex analytically regular set $X^{reg} \subset
\hat{X}$. Note that the assumption \eqref{eq:growthest41} will hold on
sufficiently small balls around a given point, by the previous lemma. 

\begin{prop}\label{prop:almostreg1}
  There exists an $\epsilon_2 > 0$, depending on $(X, \omega)$, with
  the following property. Suppose that $\hat{x}\in \hat{U}_j$ and $k >
  0$ is a large integer such that $\epsilon_2^{-1}k^{-2} <
  \epsilon_2$. Suppose in addition that
  \[ d_{GH}\Big( B(\hat{x}, \epsilon_2^{-1}k^{-2}),
    B_{\mathbb{R}^{2n}}(0, \epsilon_2^{-1}k^{-2})\Big) < \epsilon_2
    k^{-2}, \]
  and that
  \[ \label{eq:growthest41} \fint_{B(\hat{x}, \epsilon_2^{-1}k^{-2})} |\hat{s}_j|^2\, \omega^n \leq
    2^{2N} \fint_{B(\hat{x}, \frac{1}{2}\epsilon_2^{-1}k^{-2})}
    |\hat{s}_j|^2\, \omega^n, \]
  for the $N$ in Lemma~\ref{lem:finitevanish}. Then $\hat{x}\in
  X^{reg}$, where $X^{reg}$ is the complex analytically regular set of
  $X$, viewed as a subset of $\hat{X}$.
\end{prop}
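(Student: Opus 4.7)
The plan is to construct holomorphic sections of $L^k$ that serve as local holomorphic coordinates near $\hat x$, broadly in the spirit of Donaldson-Sun~\cite{DS1}, with the crucial modification that the control on $\Gamma$ that in \cite{DS1} comes from smooth Ricci lower bounds is here replaced by the growth control on $\hat s_j$ from Lemmas~\ref{lem:finitevanish}, \ref{lem:3ann}, and~\ref{lem:growthbound30}. The first step is to propagate the hypotheses down to all smaller scales: by standard RCD volume rigidity for almost Euclidean balls, combined with iterative use of Lemma~\ref{lem:3ann} and the growth assumption~\eqref{eq:growthest41}, one obtains that on all sub-balls $B(\hat x,\rho)$ with $\rho\leq \epsilon_2^{-1}k^{-2}$ the averages $\fint_{B(\hat x,\rho)}|\hat s_j|^2$ are comparable to $\rho^{2\mu}$ with $\mu < N$, while Lemma~\ref{lem:finitevanish} gives a matching lower bound. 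In particular $\hat s_j$ behaves at $\hat x$ like a harmonic function with a finite order of vanishing on a Euclidean ball.

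Next, using the Gromov-Hausdorff identification $\Psi : B_{\mathbb R^{2n}}(0,r)\to B(\hat x,r)$ with $r=\epsilon_2^{-1}k^{-2}$, I would pull back coordinates $w_1,\dots,w_n$ and fix a local trivialization of $L^k$ in which its Hermitian potential is approximately $k|w|^2/2$. Since the total $L^k$-curvature on $B(\hat x,r)$ is of order $k r^2 = \epsilon_2^{-2}k^{-3}$, the line bundle is nearly trivial on this tiny ball. Let $\chi$ be a smooth cutoff supported in $B(\hat x, r)$ equal to $1$ on $B(\hat x, r/2)$, and form the smooth approximate sections $\tilde\sigma_0=\chi$ and $\tilde\sigma_i=\chi\, w_i$ for $i=1,\dots,n$ in this trivialization. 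The $\bar\partial$-errors are small in $L^2$-norm, controlled by the Gromov-Hausdorff error $\epsilon_2 k^{-2}$ together with the $L^k$-trivialization error.

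Now H\"ormander's $L^2$-estimate (Theorem~\ref{thm:Hormander}) applied on $(X^{reg}, k\omega)$ to the line bundle $L^k\otimes K^{-1}_{X^{reg}}$, whose curvature is $(k+\lambda)\omega\geq \tfrac{1}{2}k\omega$ for large $k$, produces corrections $u_i$ with $\|u_i\|_{L^2}^2\leq \tfrac{2}{k}\|\bar\partial\tilde\sigma_i\|_{L^2}^2$. The corrected sections $\sigma_i=\tilde\sigma_i - u_i$ are holomorphic on $X^{reg}$, and extend across $X\setminus X^{reg}$ by normality of $X$. Using the pointwise $L^\infty$- and gradient estimates for holomorphic sections proved earlier in this section, $\sigma_i(\hat x)\approx\tilde\sigma_i(\hat x)$, so in the Gromov-Hausdorff coordinates near $\hat x$ the holomorphic map $F=[\sigma_0:\sigma_1:\cdots:\sigma_n]$ is close to the linear coordinate embedding. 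Composing $F$ with the projective embedding coming from $L$ produces an algebraic map $X\dashrightarrow\mathbb{CP}^M$ which is a local homeomorphism near $\hat x$, hence a local biholomorphism onto its image, so $\hat x$ is represented by a smooth point of $X$ and therefore belongs to $X^{reg}\subset\hat X$.

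The main obstacle will be matching the two natural scales: the focal scale of $L^k$ is $k^{-1/2}$, but the almost-Euclidean scale in the hypothesis is $k^{-2}\ll k^{-1/2}$. The approximate sections must therefore be built at the much smaller scale, which only works because $L^k$ is nearly trivial there, and one needs the pointwise bounds for holomorphic sections to turn the $L^2$-control from H\"ormander into pointwise control at $\hat x$. A secondary difficulty is ensuring that $F$ is genuinely injective (not only immersive) near $\hat x$ in $\hat X$, rather than merely on $X^{reg}$; this is precisely where the finite order of vanishing of $\hat s_j$, encoded by~\eqref{eq:growthest41} and Lemma~\ref{lem:finitevanish}, controls the codimension of $\Gamma$ and plays the role that lower Ricci bounds play in Donaldson-Sun.
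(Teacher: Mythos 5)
Your outline and the paper's proof share the Donaldson--Sun philosophy of constructing peaked holomorphic sections via H\"ormander, but the actual mechanism by which the hypotheses enter is quite different, and your version has a genuine gap precisely where the new ideas in the paper are needed.

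The paper argues by contradiction: if no $\epsilon_2$ works, take a sequence $\hat{x}_i$, $k_i\to\infty$ with $\epsilon_2=1/i$, so the rescaled balls converge to $\mathbb{R}^{2n}$. The growth control~\eqref{eq:growthest41}, propagated to all scales by Lemma~\ref{lem:3ann}, gives uniform $L^2$-bounds on the normalized defining functions $\tilde{s}_i$, which (together with Proposition~\ref{prop:RCDgrad}) lets one extract a \emph{nonzero} harmonic limit $\tilde{s}_\infty$ on $\mathbb{R}^{2n}$, which can be taken homogeneous. The key step you are missing is the analysis of $\Sigma=\tilde{s}_\infty^{-1}(0)$: Anderson $\epsilon$-regularity on the open sets where $\tilde{s}_\infty\neq 0$ produces limiting complex structures, and the Caffarelli--Friedman nodal set estimate shows $\dim_{\mathcal{H}}\Sigma\leq 2n-2$, so the complement is connected and the complex structure is the standard one. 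Only then can one run the \cite{LSz1}/\cite{DS1} construction, with $\Sigma$ playing the role of the singular set. In your write-up the functions $\hat{s}_j$ never actually enter the construction --- you observe at the end that they ``control the codimension of $\Gamma$ and play the role that lower Ricci bounds play,'' but you do not say how, and without this the H\"ormander argument is not justified: a priori $\Gamma$ could be dense in the almost-Euclidean ball, and the $\bar\partial$-error of $\chi\,w_i$ has no reason to be small in $L^2$. Your direct (non-contradiction) approach has no object analogous to $\tilde{s}_\infty$ whose zero set one can bound, so there is no mechanism for excising the singular set.

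Two secondary remarks. First, the ``scale mismatch'' you flag is not a real obstacle: the $k^{-2}$ in the statement is a typo for $k^{-1/2}$ (the proof works throughout with balls $B(\hat{x}_i, ik_i^{-1/2})$ rescaled by $k_i^{1/2}$, exactly the peak scale of $L^k$). Your proposed workaround --- that $L^k$ is ``nearly trivial'' at scale $k^{-2}$ so the construction works there --- would not in any case produce sections concentrated at $\hat{x}$, since H\"ormander sections naturally localize only at the $k^{-1/2}$ scale. Second, even the injectivity claim at the end needs more than ``local biholomorphism'': the paper carefully uses that the map defined by the constructed sections is generically one-to-one into $\mathbb{C}^n$, and invokes normality of $X$ to upgrade this to one-to-one, concluding $x_i\in X^{reg}$. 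That normality step is essential and should be made explicit.
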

\begin{proof}
 We will argue by contradiction, similarly to
  \cite[Proposition 3.1]{LSz1} which in turn is based on
  Donaldson-Sun~\cite{DS1}. Suppose that no suitable $\epsilon_2$
  exists. Then we have a sequence of points $\hat{x}_i$, and
  integers $k_i > i$ such that the hypotheses are satisfied (with
  $\epsilon_2 = 1/i$). We will show that for sufficiently large $i$
  we have $\hat{x}_i \in X^{reg}$ by constructing holomorphic
  coordinates in a neighborhood of $\hat{x}_i$. 

  By a slight abuse of
  notation we will write $\hat{U}_i, \hat{s}_i$ instead of
  $\hat{U}_{j_i}$ and $\hat{s}_{j_i}$ to simplify the notation. The
  assumptions imply that the rescaled balls
  \[ \label{eq:conv10} k_i^{1/2} B(\hat{x}_i, i k_i^{-1/2}) \to \mathbb{R}^{2n}, \]
  in the pointed Gromov-Hausdorff sense. Using
  Lemma~\ref{lem:3ann} together with the condition
  \eqref{eq:growthest41},
  we can extract a nontrivial
  limit of the normalized functions
  \[ \tilde{s}_i = \frac{\hat{s}_i}{\fint_{B(\hat{x}_i, k_i^{-1/2})}
      |\hat{s}_i|^2\, \omega^n}. \]
  Indeed, we have
  \[ \label{eq:L2norm10} \fint_{B(\hat{x}_i, k_i^{-1/2})} |\hat{s}_i|^2\,
  \omega^n = 1, \] and using Lemma~\ref{lem:3ann} with some $\mu\in (N,
  2N)$, together with
  \eqref{eq:growthest41}, implies that for sufficiently large $i$ we
  have 
  \[ \fint_{B(\hat{x}_i, 2^{-j} \epsilon_2^{-1} k_i^{-1/2})} |\hat{s}_i|^2\,\omega^n \leq
    2^{4N}\fint_{B(\hat{x}_i, 2^{-j-1} \epsilon_2^{-1}k_i^{-1/2})}
    |\hat{s}_i|^2\,\omega^n, \]
  for all $j \geq 0$. In particular, viewed as functions on the
  rescaled balls $k_i^{1/2} B(\hat{x}_i, ik_i^{-1/2})$, the $L^2$
  norms of the $\hat{s}_i$ are bounded independently of $i$ on any
  $R$-ball. Using the gradient estimate,
  Proposition~\ref{prop:RCDgrad}, it follows that up to choosing a
  subsequence, the functions $\tilde{s}_i$ converge locally uniformly
  to a harmonic function $\tilde{s}_\infty :
  \mathbb{R}^{2n}\to\mathbb{C}$. As a consequence, $\tilde{s}_\infty$ is 
  smooth, and because of the normalization
  \eqref{eq:L2norm10}, $\tilde{s}_\infty$ is nonzero.
  
  Note that if we take a sequence of rescalings of $\mathbb{R}^{2n}$
  with factors going to infinity, and consider the corresponding
  pullbacks of $\tilde{s}_\infty$, normalized to have unit $L^2$-norm
  on the unit balls, then this new sequence of harmonic functions will
  converge to the leading order homogeneous piece of the Taylor
  expansion of $\tilde{s}_\infty$ at the origin (up to a constant
  factor). This means that in the procedure above, up to replacing the
  integers $k_i$ by suitable larger integers, we can
  assume that the limit $\tilde{s}_\infty$ is in fact homogeneous. 

  Let us write $\Sigma =
  \tilde{s}_\infty^{-1}(0)$. Our next goal is to show that under the
  convergence in \eqref{eq:conv10}, the set $\mathbb{R}^{2n}\setminus
  \Sigma$ is the locally smooth limit of subsets of $X^{reg}$, and
  that $\tilde{s}_\infty$ is actually a holomorphic function under an
  identification $\mathbb{R}^{2n} = \mathbb{C}^n$. Then we will be
  able to follow the argument in the proof of \cite[Proposition
  3.1]{LSz1} with the cone $V = \mathbb{C}^n$, but treating
  $\Sigma$ as the singular set.

  Note that since $\tilde{s}_\infty$ is a nonzero harmonic function,
  the set $\mathbb{R}^{2n}\setminus \Sigma$ is open and dense in
  $\mathbb{R}^{2n}$. 
  Suppose that $V\subset \mathbb{R}^{2n}$ is an open
  subset such that $\bar{V}$ is compact and $|\tilde{s}_\infty| > 0$
  on $\bar{V}$. Then, because of the local uniform convergence of
  $\tilde{s}_i$ to $\tilde{s}_\infty$, and the fact that the sets
  $\tilde{s}_i\not=0$ are contained in $X^{reg}$, it follows that we
  have open subsets $V_i \subset\subset k_i^{-2}B(\hat{x}_i,
  ik_i^{-1/2})\cap X^{reg}$, which converge in the Gromov-Hausdorff
  sense to $V$. The metrics on the $V_i$ are smooth non-collapsed
  K\"ahler-Einstein metrics, so using Anderson's $\epsilon$-regularity
  result~\cite{And}, up to choosing a
  subsequence, the complex structures on $V_i$ converge to a complex
  structure on $V$ with respect to which the Euclidean metric is
  K\"ahler. Note that we do not yet know that
  $\mathbb{R}^{2n}\setminus \Sigma$ is connected, and in principle we
  may get different complex structures on different connected
  components. Our next goal is to show that the Hausdorff dimension of
  $\Sigma$ is at most $2n-2$, which will show that the complement of
  $\Sigma$ is connected. 

  We can assume that the holomorphic functions
  $\tilde{s}_i$ on $V_i$ converge to a holomorphic function $\tilde{s}_\infty$
  on $V$. Writing
  $\tilde{s}_\infty = u_\infty + \sqrt{-1} v_\infty$, we therefore
  have $\langle \nabla
  u_\infty, \nabla v_\infty\rangle = 0$ and $|\nabla v_\infty| =
  |\nabla u_\infty|$ on $\mathbb{R}^{2n}\setminus \Sigma$, and by
  density these relations extend to all of $\mathbb{R}^{2n}$. We can assume that $u_\infty$ is
  non-constant. Let $\alpha > 2n-2$, and suppose that the Hausdorff measure
  $\mathcal{H}^\alpha(\Sigma) > 0$. By Caffarelli-Friedman~\cite{CF}
  (see also Han-Lin~\cite{HL}) we know that
  $\mathcal{H}^\alpha(\Sigma \cap |\nabla u_\infty|^{-1}(0)) =0$, and
  so we can find an $\alpha$-dimensional point of density $q$ of
  $\Sigma \setminus |\nabla u_\infty|^{-1}(0)$. Since
  $\nabla u_\infty(q)\not=0$, it follows that $\nabla
  v_\infty(q)\not=0$ and $\langle \nabla v_\infty(q), \nabla
  u_\infty(q)\rangle = 0$. Therefore in a neighborhood of $q$ the set
  $\Sigma$ is a smooth $2n-2$-dimensional submanifold, contradicting
  that $q$ is an $\alpha$-dimensional point of density. In conclusion
  $\dim_{\mathcal{H}} \Sigma \leq 2n-2$, and so
  $\mathbb{R}^{2n-2}\setminus \Sigma$ is connected.

  We can therefore assume that in the argument above the complex
  structure that we obtain on $\mathbb{R}^{2n}\setminus \Sigma$ agrees with the
  standard structure on $\mathbb{C}^n$, and $\tilde{s}_\infty$ is a
  holomorphic function on $\mathbb{C}^n\setminus \Sigma$, but since
  it is smooth, it is actually holomorphic on $\mathbb{C}^n$.
  In particular $\tilde{s}_\infty^{-1}(0)$ is a complex hypersurface
  defined by a homogeneous holomorphic function. 

  At this point we can closely follow the  proof of \cite[Proposition
  3.1]{LSz1}), treating the zero set $\tilde{s}_\infty^{-1}(0)$ as the
  singular set $\Sigma$ in \cite{LSz1}. The properties of the set
  $\Sigma$ that are used are that the tubular $\rho$-neighborhood
  $\Sigma_\rho$ satisfies the volume bounds $\mathrm{vol}(\Sigma_\rho\cap
  B(0,R))\leq C_R \rho^2$, where the constant $C_R$ in our setting could
  depend on $R, \tilde{s}_\infty$. In addition if $B(p, 2r)
  \in \mathbb{R}^{2n}\setminus \Sigma$, then $B(p, r)$ is the Gromov-Hausdorff limit
  of balls $B(p_i, r) \subset (M, k_i\omega)$ in K\"ahler-Einstein
  manifolds, and so by  Anderson's result~\cite{And} we have good
  holomorphic charts on the $B(p_i, r)$ for sufficiently large $i$,
  analogous to those in \cite[Theorem 1.4]{LSz1}. 
   The rest of the proof is then identical to
  the argument in the proof of \cite[Proposition 3.1]{LSz1} (see also
  Donaldson-Sun~\cite{DS1}) to show that for sufficiently large $i$ we can construct holomorphic
  sections $s_0, \ldots, s_n$ of $L^{k_i'}$ for suitable powers
  $k_i'$, such that $\frac{s_1}{s_0}, \ldots, \frac{s_n}{s_0}$ define
  a generically one-to-one map from a neighborhood of $x_i =
  \hat{\Phi}_X(\hat{x}_i)$ in $X$ to a subset of $\mathbb{C}^n$. Since
  $X$ is normal, it follows that the map is one-to-one, and so $x_i\in
  X^{reg}$. Therefore $\hat{x}_i\in X^{reg}$ as claimed. 
\end{proof}

For any $\epsilon > 0$, let us define the $\epsilon$-regular set
$\mathcal{R}_\epsilon(Y)$ in a noncollapsed RCD space $Y$  to be the set of points $p$ that
satisfy
\[ \label{eq:epsreg}
  \lim_{r\to 0} r^{-2n} \mathrm{vol}(B(p,r)) > \omega_{2n} -
  \epsilon, \]
where $\omega_{2n}$ is the volume of the $2n$-dimensional Euclidean
unit ball. Then $\mathcal{R}_\epsilon(Y)$ is an open set, and
from the previous result we obtain the following.

\begin{prop}\label{prop:almostreg31}
  There exists an $\epsilon_3 > 0$, depending on $(X, \omega)$, such
  that the $\epsilon_3$-regular set $\mathcal{R}_{\epsilon_3}(\hat{X})\subset
  \hat{X}$ coincides with the complex analytically regular set
  $X^{reg}$.  
\end{prop}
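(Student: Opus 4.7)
The plan is to prove the two inclusions separately. The direction $X^{reg} \subset \mathcal{R}_{\epsilon}(\hat X)$ for every $\epsilon > 0$ is essentially immediate: if $x \in X^{reg}$ then $\omega$ is a smooth K\"ahler metric in a neighborhood of $x$, so the tangent cone at $x$ is $\mathbb{R}^{2n}$ and the volume density equals $\omega_{2n}$, in particular exceeding $\omega_{2n} - \epsilon$.

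The main work lies in the reverse inclusion $\mathcal{R}_{\epsilon_3}(\hat X) \subset X^{reg}$, and my approach is to feed Proposition~\ref{prop:almostreg1} with its two hypotheses: a metric ball that is nearly Euclidean in the Gromov-Hausdorff sense, and the growth condition~\eqref{eq:growthest41} on the local defining function $\hat{s}_j$. The input I will invoke from the RCD literature is the almost-volume-rigidity theorem of De Philippis-Gigli for non-collapsed RCD spaces: for any $\delta > 0$ there is an $\epsilon(\delta) > 0$ such that if the volume density $\Theta(\hat x) = \lim_{r \to 0} r^{-2n}\, \mathrm{vol}(B(\hat x, r))$ exceeds $\omega_{2n} - \epsilon(\delta)$, then for all sufficiently small $r$ one has $d_{GH}(B(\hat x, r), B_{\mathbb{R}^{2n}}(0, r)) < \delta r$. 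I would then set $\epsilon_3 := \epsilon(\epsilon_2^2)$, where $\epsilon_2$ is the constant from Proposition~\ref{prop:almostreg1}.

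If $\hat x \in \mathcal{R}_{\epsilon_3}(\hat X)$ and $j$ is chosen so that $\hat x \in \hat U_j$, all sufficiently small balls around $\hat x$ are then $\epsilon_2^2$-close (relative to their radius) to Euclidean balls. Lemma~\ref{lem:growthbound30} thus applies at some scale $r_1$ and yields $\limsup_{r \to 0} \fint_{B(\hat x, r)} |\hat s_j|^2 / \fint_{B(\hat x, r/2)} |\hat s_j|^2 \leq 2^{2N}$. I would then pick an integer $k$ large enough that $r_k := \epsilon_2^{-1} k^{-2}$ simultaneously satisfies the relative GH closeness bound $d_{GH}(B(\hat x, r_k), B_{\mathbb{R}^{2n}}(0, r_k)) < \epsilon_2 k^{-2}$ and the growth bound~\eqref{eq:growthest41}; both are guaranteed for all sufficiently small scales by the preceding two facts. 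Proposition~\ref{prop:almostreg1} then yields $\hat x \in X^{reg}$.

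The main obstacle, and essentially the only non-routine step, is having the right almost-volume-rigidity statement for non-collapsed RCD spaces, which upgrades a lower bound on the volume density at a point to relative GH closeness of all small enough balls around that point to Euclidean balls. This is the RCD analog of Cheeger-Colding's ``volume cone implies metric cone'' theorem combined with Colding's volume convergence, and it is available from the work of De Philippis-Gigli~\cite{DPG, DPG2}. Once this is cited correctly, the rest is bookkeeping of the constants in Proposition~\ref{prop:almostreg1} and Lemma~\ref{lem:growthbound30}.
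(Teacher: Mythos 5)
Your argument is correct and follows essentially the same route as the paper: both proofs use the almost-volume-rigidity statement (Cheeger--Colding in the smooth-limit setting, De Philippis--Gigli~\cite{DPG, DPG2} for non-collapsed RCD spaces) to convert the volume-density lower bound at $\hat x$ into relative GH-closeness of all small balls to Euclidean balls, then feed this into Lemma~\ref{lem:growthbound30} to obtain the growth bound~\eqref{eq:growthest41}, and finally invoke Proposition~\ref{prop:almostreg1}. The only bookkeeping detail worth highlighting is that the $\limsup$ bound in Lemma~\ref{lem:growthbound30} actually gives the strictly stronger ratio bound $2^{2\mu}$ with $\mu < N$ for all sufficiently small $r$ (as is clear from the proof of that lemma), so the hypothesis~\eqref{eq:growthest41} at the specific scale $r_k = \epsilon_2^{-1}k^{-2}$ is indeed available for all large $k$; your proposal implicitly relies on this, as does the paper, and both resolve it the same way by shrinking $\epsilon_3$ if needed.
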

\begin{proof}
  It is clear that $X^{reg}\subset
  \mathcal{R}_{\epsilon_3}(\hat{X})$. To see the reverse inclusion,
  note that by Cheeger-Colding~\cite{CC1}, and De Philippis-Gigli~\cite{DPG} in
  the setting of non-collapsed RCD spaces, given the $\epsilon_2 > 0$ in
  Proposition~\ref{prop:almostreg1}, there exists an $\epsilon_3 > 0$
  such that if $\hat{x} \in \mathcal{R}_{\epsilon_3}$, then for all
  sufficiently large $k$ (depending on $\hat{x}$), we have
  \[ d_{GH}\Big( B(\hat{x}, \epsilon_2^{-1}k^{-2}),
    B_{\mathbb{R}^{2n}}(0, \epsilon_2^{-1}k^{-2})\Big) < \epsilon_2
    k^{-2}. \]
  Using also Lemma~\ref{lem:growthbound30} (and choosing $\epsilon_3$
  smaller if necessary), we have the growth estimate
  \eqref{eq:growthest41}. Proposition~\ref{prop:almostreg1} then
  implies that $\hat{x}\in X^{reg}$. 
\end{proof}

This has the following immediate corollary.
\begin{cor}\label{cor:regset}
  There is an $\epsilon > 0$, depending on $(X, \omega)$, such that
  the $\epsilon$-regular set $\mathcal{R}_\epsilon(\hat{X})$ coincides
  with the metric regular set of $\hat{X}$, i.e. the points
  $\hat{x}\in \hat{X}$ where the tangent cone is $\mathbb{R}^{2n}$. 
\end{cor}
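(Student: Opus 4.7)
The plan is to take $\epsilon = \epsilon_3$, where $\epsilon_3$ is the constant from Proposition~\ref{prop:almostreg31}, and verify two inclusions. The heavy work is already done; this corollary is a short deduction.

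First I would handle the easy inclusion: the metric regular set is contained in $\mathcal{R}_\epsilon(\hat{X})$ for every $\epsilon > 0$. Indeed, if $\hat{x}\in \hat{X}$ has tangent cone $\mathbb{R}^{2n}$, then by the Bishop--Gromov volume monotonicity for non-collapsed $RCD(K,2n)$ spaces (De~Philippis--Gigli~\cite{DPG}), the rescaled volume ratios $r^{-2n}\mathrm{vol}(B(\hat{x},r))$ are monotone up to curvature corrections and converge as $r\to 0$ to the volume density of the tangent cone, which is $\omega_{2n}$. In particular
\[
\lim_{r\to 0} r^{-2n}\mathrm{vol}(B(\hat{x},r)) = \omega_{2n} > \omega_{2n} - \epsilon,
\]
so $\hat{x}\in \mathcal{R}_\epsilon(\hat{X})$.

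Next I would handle the reverse inclusion for $\epsilon = \epsilon_3$. By Proposition~\ref{prop:almostreg31} we have $\mathcal{R}_{\epsilon_3}(\hat{X}) = X^{reg}$, viewed as a subset of $\hat{X}$. On $X^{reg}$ the metric $\omega$ is a smooth K\"ahler metric, so for any $\hat{x}\in X^{reg}$ the pointed rescalings $(\hat{X},\lambda_i d_{\hat{X}},\hat{x})$ with $\lambda_i\to\infty$ converge to the Euclidean tangent space $T_{\hat{x}}X^{reg}\cong \mathbb{R}^{2n}$. Thus every point of $\mathcal{R}_{\epsilon_3}(\hat{X})$ belongs to the metric regular set.

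Combining the two inclusions at $\epsilon = \epsilon_3$ yields equality, proving the corollary. There is no substantive obstacle here, since Proposition~\ref{prop:almostreg31} already identifies the almost-Euclidean set with the complex analytic regular set, and smoothness of $\omega$ on $X^{reg}$ takes care of the remaining step.
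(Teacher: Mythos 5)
Your proof is correct and matches the intended argument: the paper treats this as an immediate consequence of Proposition~\ref{prop:almostreg31}, and your two inclusions (volume density $\omega_{2n}$ at metric regular points gives one direction, smoothness of $\omega$ on $X^{reg}=\mathcal{R}_{\epsilon_3}(\hat{X})$ gives the other) are exactly the expected deduction.
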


Given these preliminaries, we have the following result, analogous to \cite[Proposition
3.1]{LSz1} in our setting. 

\begin{prop}\label{prop:goodsection}
  Let $(V,o)$ be a metric cone, such that for any $\epsilon > 0$ the
  singular set $V\setminus \mathcal{R}_\epsilon(V)$ has zero capacity
  (in the sense of (3) in Definition~\ref{defn:almostsmooth}). Let $\zeta > 0$.
  There are $K, \epsilon, C > 0$, depending on
  $\zeta, (X, \omega), V$ satisfying the following property. Suppose
  that $k$ is a large integer such that $\epsilon^{-1}k^{-1/2} <
  \epsilon$ and for some $\hat{x}\in \hat{X}$
  \[ d_{GH}\Big(B(\hat{x}, \epsilon^{-1}k^{-1/2}), B(o,
    \epsilon^{-1} k^{-1/2})\Big) < \epsilon k^{-1/2}.\]
  Then for some $m < K$ the line bundle $L^{mk}$ admits a holomorphic
  section $s$ over $M=X^{reg}\setminus D$ such that $\Vert s\Vert_{L^2(h^{mk}, mk \omega)} < C$
  and
  \[ \label{eq:sbound20}\Big| |s(z)| - e^{-mk d(z, \hat{x})^2/2}\Big| < \zeta \]
  for $z\in M$. 
\end{prop}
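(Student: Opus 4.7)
The plan is to adapt the Donaldson--Sun construction of peak holomorphic sections to our setting, combining H\"ormander's $L^2$-estimate (Theorem~\ref{thm:Hormander}) with the almost-Euclidean regularity of Proposition~\ref{prop:almostreg31} and the zero-capacity hypothesis on $V\setminus V^{reg}$. It is convenient to rescale once and for all to the metric $k\omega$ on $X^{reg}$, so the hypothesis becomes pointed Gromov--Hausdorff closeness of the unit-scale balls $B(\hat x, \epsilon^{-1})$ in $(\hat X, k\omega)$ and in $V$.

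First I would choose the integer $m$, and hence the constant $K$, depending only on $\zeta$: on $V^{reg}$ the constant section $1$ of the trivial line bundle with Hermitian metric $e^{-m r^2/2}$, where $r = d_V(\cdot,o)$, is a model holomorphic section with curvature $m\omega_V$ and Gaussian norm $e^{-m r^2/2}$, so taking $m$ large makes this model concentrated at scales much smaller than the outer radius $\epsilon^{-1}$. By Proposition~\ref{prop:almostreg31}, for $\epsilon$ small the point $\hat x$ lies in $X^{reg}$, and a local holomorphic trivialization $\tau$ of $L$ near $\hat\Phi_X(\hat x)$ normalized by $|\tau|_h(\hat\Phi_X(\hat x)) = 1$ provides a local K\"ahler potential $\phi = -\log|\tau|_h^2$ for $\omega$. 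As in the proof of Proposition~\ref{prop:almostreg1}, on compact subsets of $V^{reg}$ the GH approximation can be upgraded to smooth K\"ahler convergence via Anderson's $\epsilon$-regularity; this lets me identify $\tau^{mk}$ with the model section up to a scale-invariantly small error $mk\phi - m r^2/2$.

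Next I would form the smooth approximate section $\tilde s = \chi \tau^{mk}$, where $\chi$ is a product of (i) a radial cutoff supported on $B(\hat x, A k^{-1/2})$ with $A$ large but bounded in terms of $\zeta$, (ii) a cutoff coming from Lemma~\ref{lem:cutoff} to excise the complex-analytic singular set of $\hat X$, and (iii) a cutoff supplied by the zero-capacity hypothesis on $V\setminus V^{reg}$ to excise the model singular set. The factors (ii) and (iii) can be chosen to approach $1$ pointwise with arbitrarily small Dirichlet energy, so the $L^2$-norm of $\bar\partial \tilde s$ is dominated by an annular contribution of order $e^{-m A^2/2}$, which by the choice of $m$ and $A$ is bounded by a small multiple of $\zeta^2$. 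Applying Theorem~\ref{thm:Hormander} to $P = L^{mk}\otimes K_M^{-1}$ on $M = X^{reg}$, which is complete K\"ahler by \cite{Dem82}, with $\sqrt{-1}F_{h_P} = (mk+\lambda)\omega \geq \tfrac12 (mk\omega)$ for $k$ large, produces $u$ with $\bar\partial u = \bar\partial \tilde s$ and $\Vert u\Vert_{L^2}$ small. Then $s = \tilde s - u$ is holomorphic with the required $L^2$ bound, and the pointwise estimate \eqref{eq:sbound20} follows from the $L^\infty$ and gradient estimates for holomorphic sections established just before Lemma~\ref{lem:finitevanish}, applied to $u$ to see that the correction is uniformly small, together with the approximation $|\tilde s(z)|\approx e^{-mk\,d(z,\hat x)^2/2}$ inside $\mathrm{supp}\,\chi$ and the Gaussian smallness of both quantities outside it.

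The principal obstacle is the transfer step: our GH convergence $(\hat X, k\omega)\to V$ is smooth only on the regular set of $V$, and the absence of a two-sided Ricci bound on our cscK approximations $\omega_\epsilon$ prevents any improvement near its singular set. The zero-capacity hypothesis on $V\setminus V^{reg}$ is precisely what allows the cutoff construction in $\tilde s$ to make the singular-set contribution to $\Vert\bar\partial \tilde s\Vert_{L^2}$ arbitrarily small; ensuring that the $\bar\partial$-error from both the annular region and these singular-set cutoffs together is absorbed by the Gaussian scale $e^{-m A^2/2}$, and that the residual pluriharmonic ambiguity in matching $mk\phi$ with $m r^2/2$ is controlled by the choice of trivialization $\tau$, is the most delicate technical point of the argument.
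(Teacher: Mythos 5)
Your overall strategy (cutoff approximate peak section plus Hörmander, with the zero-capacity hypothesis on $V\setminus V^{reg}$ supplying the singular-set cutoff) is the same one the paper intends, which is that of Donaldson--Sun and \cite{LSz1}. But there is a genuine error in your setup: you claim that by Proposition~\ref{prop:almostreg31} the point $\hat{x}$ lies in $X^{reg}$ for small $\epsilon$. That proposition equates $X^{reg}$ with the $\epsilon_3$-regular set, i.e.\ with points whose tangent cone is $\mathbb{R}^{2n}$. Here the hypothesis only says that $B(\hat{x},\epsilon^{-1}k^{-1/2})$ is GH-close to a ball in an arbitrary (typically singular) cone $V$, so $\hat{x}$ can perfectly well be a singular point of $\hat{X}$, and indeed must be allowed to be one for the application in Theorem~\ref{thm:homeomorphic}. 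In consequence the normalization $|\tau|_h(\hat\Phi_X(\hat{x}))=1$ does not by itself put $\phi=-\log|\tau|_h^2$ close to $d(\cdot,\hat{x})^2$: the two differ by a bounded pluriharmonic function whose behaviour near the singular vertex is uncontrolled, and since there is no smooth convergence near $V\setminus V^{reg}$ you cannot bound this difference there. The correct route, as in \cite{DS1,LSz1}, is to build the model section on $V^{reg}$ (where $r^2/2$ is the K\"ahler potential of the cone), restrict to a compact $K\subset V^{reg}$ whose complement has small capacity, transfer it to the nearby sets in $X^{reg}$ via the smooth (Anderson) convergence there, and estimate the pluriharmonic discrepancy only on such $K$ via the gradient estimate, then absorb the rest with the cutoffs.

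A second, related omission is the role of the parameter $m<K$. On a singular cone $V$ the model Hermitian line bundle over $V^{reg}$ with curvature $m\omega_V$ need not be holomorphically trivializable for every $m$; one chooses a suitable $m$ (bounded in terms of $V$) to kill the holonomy obstruction before the model section exists globally on $V^{reg}$. Your sketch fixes $m$ purely from the desired Gaussian decay, which is not enough. These are the two points the paper is implicitly relying on when it says the argument of \cite[Proposition~3.1]{LSz1} applies under the zero-capacity hypothesis; with those corrections, the rest of your outline (the annular and singular-set cutoffs, the $\bar\partial$-estimate of order $e^{-mA^2/2}$, applying Theorem~\ref{thm:Hormander} on $M=X^{reg}$, and then invoking the sup and gradient estimates for the rescaled metric $k\omega$) follows the intended proof.
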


  Given the results above, the argument is essentially the same as
  that in \cite{LSz1} (see also Donaldson-Sun~\cite{DS1}). One main difference is that in the 
  setting of noncollapsed RCD spaces the sharp estimates of
  Cheeger-Jiang-Naber~\cite{CJN} do not yet seem to be available in
  the literature. However, the proof of \cite[Proposition 3.1]{LSz1}
  applies under the assumption that for any $\epsilon > 0$ the singular set $\Sigma =
  V\setminus \mathcal{R}_{\epsilon}(V)$ has zero capacity.

  We can rule out non-flat (iterated) tangent cones that split
  off a Euclidean factor of $\mathbb{R}^{2n-2}$, following the approach
  of Chen-Donaldson-Sun~\cite[Proposition 12]{CDS2} (see also
  \cite[Proposition 3.2]{LSz1}).

  \begin{prop}\label{prop:nocodim2}
    Suppose that $\hat{x}_j\in \hat{X}$ and for a sequence of integers
    $k_j\to\infty$ the rescaled pointed sequence $(\hat{X}, k_j^2
    d_{\hat{X}}, \hat{x}_j)$ converges to $\mathbb{R}^{2n-2}\times
    C(S^1_\gamma)$ in the pointed Gromov-Hausdorff sense. Here $C(S^1_\gamma)$ is the
  cone over a circle of length $\gamma$. Then $\gamma
    = 2\pi$, i.e. $C(S^1_\gamma) = \mathbb{R}^2$. 
  \end{prop}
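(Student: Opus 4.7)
The strategy follows Chen--Donaldson--Sun~\cite{CDS2} and its adaptation in \cite{LSz1}. Bishop--Gromov on the non-collapsed $RCD$ space $\hat X$ gives $\gamma \leq 2\pi$, so I assume $\gamma < 2\pi$ for contradiction and aim to transfer enough holomorphic structure from $\hat X$ to $V = \mathbb{R}^{2n-2} \times C(S^1_\gamma)$ to pin down its complex geometry. Note that the singular set $\Sigma = \mathbb{R}^{2n-2}\times \{o\}$ then has real codimension~$2$, and $V^{reg}$ inherits a natural K\"ahler structure as a product of the flat metric on $\mathbb{C}^{n-1}$ and the flat metric on the cone away from the apex; under uniformization $C(S^1_\gamma)\setminus\{o\}$ is biholomorphic to $\mathbb{C}^*$, so that $V^{reg}$ is biholomorphic to $\mathbb{C}^{n-1}\times \mathbb{C}^*$.

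First I would verify that $V$ satisfies the hypothesis of Proposition~\ref{prop:goodsection}: as a non-collapsed tangent cone of a non-collapsed $RCD(\lambda,2n)$ space, $V$ is a non-collapsed $RCD(0,2n)$ space; the singular set $V\setminus \mathcal{R}_\epsilon(V)$ is contained in $\Sigma$ for small $\epsilon$ (by Corollary~\ref{cor:regset} applied to $V$), and $\Sigma$ carries good cutoff functions since it has real codimension $2$, so it has zero capacity in the sense of Definition~\ref{defn:almostsmooth}. Applying Proposition~\ref{prop:goodsection} at the vertex $o\in V$ (with $\hat x_j$ as base points) and at finitely many auxiliary reference points $p_\alpha\in V^{reg}$, I obtain, for large $j$, holomorphic sections $s^{(\alpha)}_j$ of $L^{m_\alpha k_j^2}$ on $X^{reg}$ whose rescaled pullbacks converge on compact subsets of $V^{reg}$ to holomorphic functions with prescribed Gaussian peaks at the $p_\alpha$. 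Combining these sections as in the proof of Proposition~\ref{prop:almostreg1} (following \cite{DS1} and \cite{LSz1}) yields a holomorphic embedding $V^{reg}\hookrightarrow \mathbb{C}^N$ compatible with the natural complex structure on $V^{reg}$.

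Because $\Sigma$ has real codimension $2$ and bounded holomorphic functions extend across it by Hartogs, this embedding extends to a holomorphic map $V\to\mathbb{C}^N$ exhibiting $V$ as a normal analytic space. Using the topological identification $V\simeq \mathbb{R}^{2n}$ and the product splitting, this analytic space is biholomorphic to $\mathbb{C}^n$. The cone scaling by $t>0$, being an isometry of the K\"ahler structure on $V^{reg}$, is a holomorphic self-map; by normality it extends to a holomorphic $\mathbb{R}_{>0}$-action on $\mathbb{C}^n$ with $o$ as unique fixed point, which complexifies to a linear $\mathbb{C}^*$-action with positive weights $w_1,\dots,w_n$ in suitable coordinates. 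The product splitting shows that the weight on the $\mathbb{C}^{n-1}$-factor is $1$ in each direction, while the weight on the cone coordinate $w$ satisfies $|w(t\cdot x)|=t^{2\pi/\gamma}|w(x)|$; hence $(w_1,\dots,w_n) = (1,\dots,1, 2\pi/\gamma)$.

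The contradiction then comes from a dimension count. If $\gamma<2\pi$ then $2\pi/\gamma>1$, and the space of homogeneous holomorphic functions on $V$ of weighted degree $\leq 1$ has dimension only $n$ (constants plus the $n-1$ linear functions on the $\mathbb{C}^{n-1}$ factor). On the other hand, applying Proposition~\ref{prop:goodsection} at $n$ reference points $p_1,\dots,p_n\in V^{reg}$ whose displacements from $o$ are linearly independent in all $n$ complex directions produces, in the limit, $n+1$ linearly independent holomorphic functions on $V$ whose growth rate at $o$ is at most that of the peak exponential, hence of weighted degree $\leq 1$. This contradicts the weight pattern above unless $2\pi/\gamma=1$. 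The main obstacle is the construction of the embedding $V^{reg}\hookrightarrow \mathbb{C}^N$ from Proposition~\ref{prop:goodsection}: extracting enough peak sections with compatible transition functions and propagating the complex structure across $\Sigma$ is the delicate part, paralleling the core of the arguments in \cite{LSz1, DS1}.
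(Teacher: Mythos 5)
The overall setup matches the paper (and both follow CDS2): verify that the singular set of $V=\mathbb{R}^{2n-2}\times C(S^1_\gamma)$ has capacity zero, apply Proposition~\ref{prop:goodsection} to produce peak holomorphic sections, and derive a contradiction. The paper's version is much shorter: from the peak sections one obtains, as in~\cite[Proposition 12]{CDS2}, a biholomorphism $F_j$ from a neighborhood of $\hat{x}_j$ onto $B(0,1)\subset\mathbb{C}^n$; this, combined with the K\"ahler--Einstein equation and interior Monge--Amp\`ere estimates, forces the rescaled limit to be smooth at the basepoint, so $\gamma=2\pi$.

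The contradiction you propose, however, has a genuine gap. You claim the peak sections supplied by Proposition~\ref{prop:goodsection}, in the limit, give $n+1$ linearly independent holomorphic functions on $V$ ``of weighted degree $\leq 1$,'' contradicting a dimension count of low-degree homogeneous functions. But the limiting peak sections are not homogeneous and have no finite weighted degree: in the Euclidean model they are Gaussians $e^{\bar p\cdot z - |p|^2/4}$, which are exponentials, not polynomials. So the assertion ``hence of weighted degree $\leq 1$'' is not justified, and the dimension comparison collapses. Moreover, the analytic-space identification $V\cong\mathbb{C}^n$ (which is fine, via bounded Riemann extension across $\{w=0\}$) means the holomorphic cotangent space at $o$ is genuinely $n$-dimensional, spanned by $dz_1,\dots,dz_{n-1},dw$; there is no obstruction at the level of differentials to the peak sections giving local holomorphic coordinates at $o$. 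The obstruction used in~\cite{CDS2} comes from the metric, not from the abstract holomorphic structure. Separately, your claim that the $\mathbb{R}_{>0}$-scaling complexifies to a \emph{linear} $\mathbb{C}^*$-action with weight $2\pi/\gamma$ on $w$ is problematic when $2\pi/\gamma\notin\mathbb{Z}$: the holomorphic $\mathbb{C}^*$-action generated by the Reeb field acts on $w$ with weight $1$ (and on the $z_i$ with weight $\gamma/2\pi$), while the naive scaling action $w\mapsto t^{2\pi/\gamma}w$ is multivalued in $t$. The correct route is the one the paper cites: use the biholomorphism $F_j$ together with a priori regularity for the K\"ahler--Einstein metric in the coordinates it provides, to conclude that the tangent cone is smooth at $o$.
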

  \begin{proof}
  If $V =\mathbb{R}^{2n-2}\times C(S^1_\gamma)$, then the singular set of $V$
  has capacity zero, and so Proposition~\ref{prop:goodsection} can be
  applied. Then, as in \cite[Proposition 12]{CDS2}, it follows that
  for sufficiently large $j$, we can find a biholomorphism $F_j$ from a
  neighborhood $\Omega_j$ of $\hat{x}_j$ to the unit ball
  $B(0,1)\subset \mathbb{C}^n$. In particular $B(\hat{x}_j,
  \frac{1}{2}k_j^{-2}) \subset X^{reg}$,
  and then the limit $\mathbb{R}^{2n-2}\times C(S^1_\gamma)$ of
  $(\hat{X}, k_j^2d_{\hat{X}}, \hat{x}_j)$ must be smooth at the
  origin. Therefore $\gamma=2\pi$. 
\end{proof}

As a consequence of this result we can prove
Theorem~\ref{thm:homeomorphic}. 
\begin{proof}[Proof of Theorem~\ref{thm:homeomorphic}]
Using Propositions~\ref{prop:nocodim1} and \ref{prop:nocodim2}, and De
Philippis-Gigli's dimension estimate~\cite{DPG} for the singular set
(extending Cheeger-Colding~\cite{CC1}), it follows that the singular
set of any iterated tangent cone of $\hat{X}$ has Hausdorff codimension at least
3. Using Proposition~\ref{prop:almostreg31} we know that the singular
set is closed, and so as in Donaldson-Sun~\cite[Proposition 3.5]{DS1}
we see that the singular set of any iterated tangent cone has capacity zero. In particular
Proposition~\ref{prop:goodsection} can be applied to any $(V,o)$ that
arises as a rescaled limit of $\hat{X}$. 

Suppose that $p\not= q$ are points in $\hat{X}$. Applying
Proposition~\ref{prop:goodsection} to tangent
cones at $p, q$, we can find sections $s_p$ and $s_q$
of some powers $L^{m_p}, L^{m_q}$, such that $|s_p(p)| > |s_p(q)|$,
and $|s_q(q)| > |s_q(p)|$. Taking powers we find that the sections
$s_p^{m_q}$ and $s_q^{m_p}$ of $L^{m_pm_q}$ separate the points $p,q$,
and so the map $\hat{\Phi}_X$ is injective as required. 
\end{proof}

To complete the proofs of Theorem~\ref{thm:main}, it remains to show the
codimension bounds for the singular
set of $\hat{X}$. By the dimension estimate of \cite{DPG}, it suffices
to show the following. Note that this result would follow from a
version of Cheeger-Colding-Tian~\cite[Theorem 9.1]{CCT} for RCD spaces, but in our
setting we can give a more direct proof. 
\begin{prop}
  In the setting of Theorem~\ref{thm:main}, 
  suppose that a tangent cone $\hat{X}_p$ at $p\in \hat{X}$ splits off
  an isometric factor of $\mathbb{R}^{2n-3}$. Then $\hat{X}_p =
  \mathbb{R}^{2n}$. In particular in the stratification of the
  singular set of $\hat{X}$ we have $S_{2n-1} = S_{2n-4}$, and so
  $\dim_{\mathcal{H}} S \leq 2n-4$. 
\end{prop}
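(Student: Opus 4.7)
The plan is to follow the strategy used for Proposition~\ref{prop:nocodim2}, with a three-dimensional cross-section in place of $C(S^1_\gamma)$. Suppose $\hat{X}_p = \mathbb{R}^{2n-3}\times C(\Sigma)$ for a noncollapsed three-dimensional RCD cone $C(\Sigma)$, and let $k_j\to\infty$ be integers realizing the convergence $(\hat{X}, k_j^2 d_{\hat{X}}, p) \to \hat{X}_p$.

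The first step is to verify that the singular set of $\hat{X}_p$ has capacity zero, so that Proposition~\ref{prop:goodsection} applies with $V = \hat{X}_p$. Iterated tangent cones of $\hat{X}_p$ are iterated tangent cones of $\hat{X}$, and by Propositions~\ref{prop:nocodim1} and~\ref{prop:nocodim2} they carry no codimension one or codimension two singular strata. Combined with the De Philippis-Gigli dimension estimate, the singular set of $\hat{X}_p$ has Hausdorff codimension at least three; it is also closed, since Proposition~\ref{prop:almostreg31} and its analogues apply to iterated tangent cones. Capacity zero then follows from the argument in Donaldson-Sun~\cite[Proposition 3.5]{DS1}, just as in the proof of Theorem~\ref{thm:homeomorphic}.

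With Proposition~\ref{prop:goodsection} available for $V = \hat{X}_p$, the next step is to repeat the construction used in the proof of Proposition~\ref{prop:nocodim2}, following Chen-Donaldson-Sun~\cite[Proposition 12]{CDS2}. For sufficiently large $j$ this produces holomorphic sections of suitable powers of $L$ whose ratios give a biholomorphism from a neighborhood $\Omega_j$ of $p$ in the rescaled space $(\hat{X}, k_j^2 d_{\hat{X}})$ onto the unit ball $B(0,1)\subset \mathbb{C}^n$. In particular a definite metric ball around $p$ at scale $k_j^{-2}$ is contained in $X^{reg}$, so the Gromov-Hausdorff limit $\hat{X}_p$ must be smooth at the origin. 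A three-dimensional metric cone is smooth at its tip only if it is isometric to $\mathbb{R}^3$, so $C(\Sigma) = \mathbb{R}^3$ and consequently $\hat{X}_p = \mathbb{R}^{2n}$.

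Granted this, the stratification statement follows quickly: Propositions~\ref{prop:nocodim1} and~\ref{prop:nocodim2} give $S_{2n-1} = S_{2n-3}$, and the step above rules out any stratum $S_{2n-3}\setminus S_{2n-4}$, so $S = S_{2n-4}$. The De Philippis-Gigli dimension estimate then yields $\dim_{\mathcal{H}} S \leq 2n-4$. I expect the main obstacle to be confirming the capacity-zero hypothesis on $\hat{X}_p$, since this requires propagating the codimension bounds and the closedness of the singular set through to iterated tangent cones in the RCD setting, rather than any new analytic input.
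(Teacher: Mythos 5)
Your verification of the capacity-zero hypothesis for $V = \hat{X}_p$ is sound and mirrors the argument already used in the proof of Theorem~\ref{thm:homeomorphic}, so Proposition~\ref{prop:goodsection} does apply. The gap is in the next step, where you claim that ``repeating the construction used in the proof of Proposition~\ref{prop:nocodim2}, following Chen-Donaldson-Sun~\cite[Proposition 12]{CDS2}'' produces a biholomorphism from a neighborhood $\Omega_j$ of $p$ onto the unit ball $B(0,1) \subset \mathbb{C}^n$. That construction is not generic: it exploits the specific complex-analytic structure of $V = \mathbb{C}^{n-1}\times C_\gamma$, which as a complex space is biholomorphic to $\mathbb{C}^n$ (the cone angle lives only in the metric). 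The H\"ormander sections are modeled on the known holomorphic coordinates $z_1,\ldots,z_{n-1},w$ of that cone, and the fact that the resulting map is a biholomorphism onto a ball is what forces $\gamma = 2\pi$. For a general metric cone $V = \mathbb{R}^{2n-3}\times C(\Sigma)$ we have no such model, no a priori knowledge of the complex structure, and in particular no reason to expect that the peak section of Proposition~\ref{prop:goodsection} can be upgraded to local holomorphic coordinates giving a ball. If the argument worked for arbitrary cones with capacity-zero singular set, it would prove that every tangent cone of $\hat{X}$ is $\mathbb{R}^{2n}$, i.e.\ that $\hat{X}$ is smooth, which is false for genuinely singular $X$ (e.g.\ quotient singularities give tangent cones $\mathbb{C}^2/\Gamma\times\mathbb{C}^{n-2}$).

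The intended proof is more elementary and does not rerun the H\"ormander machinery: write $\hat{X}_p = C(Z)\times\mathbb{R}^{2n-3}$ with $Z$ two-dimensional. If $Z$ had a singular point, its tangent cone would be $C(S^1_\gamma)$ with $\gamma < 2\pi$, and the corresponding iterated tangent cone of $\hat{X}$ at the pair would be $\mathbb{R}^{2n-2}\times C(S^1_\gamma)$, which is ruled out by Proposition~\ref{prop:nocodim2}. Hence $Z$ is a \emph{smooth} two-dimensional manifold, and since the cone metric on the regular set of $\hat{X}_p$ is Ricci-flat, $Z$ satisfies $\mathrm{Ric}(h) = h$, forcing $Z = S^2$, so $C(Z)=\mathbb{R}^3$ and $\hat{X}_p = \mathbb{R}^{2n}$. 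Your final deduction of the stratification bound from this is fine.
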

\begin{proof}
  Suppose that $\hat{X}$ has a tangent cone of the form $\hat{X}_p
  = C(Z)\times \mathbb{R}^{2n-3}$, where $Z$ is
  two-dimensional. If $Z$ had a singular point, necessarily with
  tangent cone $C(S^1_\gamma)$ for some $\gamma < 2\pi$, then
  $\hat{X}$ would have an iterated tangent cone of the form
  $\mathbb{R}^{2n-2}\times C(S^1_\gamma)$. This is ruled out by
  Proposition~\ref{prop:nocodim2}. Therefore $Z$ is
  actually a smooth two dimensional Einstein manifold with metric satisfying
  $\mathrm{Ric}(h) = h$. This implies that $Z$ is the unit 2-sphere,
  and it follows that $\hat{X}_p = \mathbb{R}^{2n}$ so that $p$ is a
  regular point. Therefore the singular set of $\hat{X}$ coincides
  with $S_{2n-4}$, as required. 
\end{proof}

\section{CscK approximations}\label{sec:cscKapprox}
In this section we will prove Theorem~\ref{thm:cscKapprox}. Thus, let
$(X, \omega_{KE})$ be an $n$-dimensional singular K\"ahler-Einstein space, such that
the automorphism group of $X$ is discrete and $\omega_{KE} \in c_1(L)$
for an ample $\mathbb{Q}$-line bundle on $X$. On the regular part we have
$\mathrm{Ric}(\omega_{KE}) = \lambda \omega_{KE}$ for a constant
$\lambda\in \mathbb{R}$. We will assume that $\lambda\in
\{0,-1,1\}$. In the latter two cases we have $L = \pm K_X$. We first
recall the properness  of the Mabuchi K-energy in this singular
setting. This has been well studied in the Fano setting (see
Darvas~\cite{DarvasMetric} for example), but we were
not able to find the corresponding much easier result in the literature for
singular varieties in the case when $\lambda \leq 0$.

First recall the definitions of certain functionals (see
Darvas~\cite{DarvasMetric} or
Boucksom-Eyssidieux-Guedj-Zeriahi~\cite{BEGZ} for instance). We choose a
smooth representative $\omega\in c_1(L)$. This means that
$m\omega$ is the pullback of the Fubini-Study metric under an
embedding using sections of $L^m$ for large $m$. In general we define
a function $f : U\to \mathbb{R}$ on an open set $U\subset X$ to be
smooth, and write $f\in C^\infty(U)$, if it is the restriction of a smooth function under an
embedding $U\subset \mathbb{C}^N$. We let
\[ \mathcal{H}_{\omega}(X) &= \{ u\in C^\infty(X)\,:\, \omega_u :=
  \omega + \ddbar u > 0\}, \\
  PSH_{\omega}(X) &= \{ u\in L^1(X)\,:\, \omega_u:= \omega + \ddbar
  u \geq 0\}. \]

We define the $\mathcal{J}_\omega$ functional on $PSH_\omega(X)\cap
L^\infty$ by setting $\mathcal{J}_\omega(0)=0$ and the variation
\[ \delta\mathcal{J}_\omega(u) = n\int_{X^{reg}} \delta u (\omega -
  \omega_u)\wedge \omega_u^{n-1}. \]

Let us choose a smooth metric $h$ on $K_X$, i.e. if $\sigma$ is a
local non-vanishing section of $K_X^r$, then the norm $|\sigma|^2_{h^r}$ is
a smooth function. The adapted measure $\mu$ is defined
using such local trivializing sections 
to be (see \cite[Section 6.2]{EGZ})
\[ \mu = (i^{rn^2} \sigma\wedge \bar\sigma)^{1/r}
  |\sigma|_{h^r}^{-2/r} \text{ on } X^{reg}, \]
extended trivially to $X$. Recall that if $X$ has klt singularities,
then $\mu$ has finite total mass. Moreover, if $\pi: Y \to X$ is a
resolution, and $\Omega$ is a smooth volume form on $Y$, then we have
\[ \label{eq:FLp} \pi^*\mu = F \Omega\, \text{ on } \pi^{-1}(X^{reg}), \]
where $F \in L^p(\Omega)$ for some $p > 1$ (see \cite[Lemma
6.4]{EGZ}).
In our three cases $\lambda\in \{0,-1,1\}$ we can choose the metric
$h$ in such a  way that the curvature of $h$ is given by
$-\lambda\omega$ for the smooth metric $\omega$. 

We define the Mabuchi K-energy, for $u\in PSH_\omega(X)\cap L^\infty$, by
\[ M_\omega(u) = \int_{X^{reg}}
  \log\left(\frac{\omega_u^n}{\mu}\right) \omega_u^n -\lambda
  \mathcal{J}_\omega(u). \] 
The first term (the entropy) is defined to be $\infty$, unless
$\omega_u^n = f\mu$ and $f\log f$ is integrable with respect to
$\mu$. We have the following result.

\begin{prop}\label{prop:Mproper}
  The functional $M_\omega$ is proper in the sense that there are
  constants $\delta, B > 0$ such that for all $u\in PSH_\omega(X)\cap L^\infty$ we have
  \[ \label{eq:Mprop1} M_{\omega}(u) > \delta \mathcal{J}_\omega(u) - B. \]
\end{prop}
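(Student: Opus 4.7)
I would split the argument into three cases according to $\lambda \in \{-1,0,1\}$. The Fano case ($\lambda=1$) is genuinely deep but well-covered by the literature, so I would invoke the singular version of Darvas's properness theorem~\cite{DarvasMetric}: since $\omega_{KE}$ exists and $\mathrm{Aut}(X)$ is discrete, the Mabuchi K-energy is proper modulo constants, giving the required inequality. The remaining work is for $\lambda \leq 0$, which I would prove directly.

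For $\lambda = -1$ the argument is immediate. Since $X$ has klt singularities, $\mu$ has finite total mass, so Jensen's inequality applied to $-\log$ on the probability measure $V^{-1}\omega_u^n$ yields
\[ \int_{X^{reg}} \log\!\left(\frac{\omega_u^n}{\mu}\right)\omega_u^n \;\geq\; V\log\!\frac{V}{\mu(X)} \;=:\; -C. \]
Combined with the standard non-negativity $\mathcal{J}_\omega(u) \geq 0$ on $PSH_\omega(X)\cap L^\infty$ (normalized so that $\sup u = 0$, using $M_\omega(u+c) = M_\omega(u)$ and the analogous invariance of $\mathcal{J}_\omega$), and the fact that $-\lambda \mathcal{J}_\omega(u) = \mathcal{J}_\omega(u)$, this gives $M_\omega(u) \geq \mathcal{J}_\omega(u) - C$, i.e.\ \eqref{eq:Mprop1} with $\delta = 1$ and $B=C$.

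For $\lambda = 0$ the Mabuchi energy reduces to the pure entropy $\int \log(\omega_u^n/\mu)\omega_u^n$, and the properness requires more. Here I would invoke the existence (via Eyssidieux-Guedj-Zeriahi) of a bounded potential $u_{KE}$ solving $\omega_{u_{KE}}^n = c\mu$, and rewrite
\[ M_\omega(u) \;=\; \mathrm{Ent}(\omega_u^n\,|\,\omega_{u_{KE}}^n) \;+\; V\log c. \]
Changing the reference K\"ahler class to $\omega_{KE} = \omega_{u_{KE}}$, the $\mathcal{J}$-functional is equivalent to $\mathcal{J}_\omega$ up to an additive constant (because $u_{KE} \in L^\infty$), so properness of $M_\omega$ with respect to $\mathcal{J}_\omega$ is equivalent to properness of $\mathrm{Ent}(\cdot\,|\,\omega_{KE}^n)$ with respect to $\mathcal{J}_{\omega_{KE}}$. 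This follows from the convexity of the Monge-Amp\`ere entropy along weak geodesics in $\mathcal{E}^1(X,\omega_{KE})$ (Berman-Berndtsson-Guedj-Zeriahi) together with the uniqueness of $u_{KE}$ modulo constants, which holds because $\mathrm{Aut}(X)$ is discrete and hence $\omega_{KE}$ admits no nontrivial holomorphic vector fields.

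The main obstacle is the CY case. Unlike $\lambda=-1$, where $\mathcal{J}_\omega$ appears explicitly in $M_\omega$ with a favorable sign, for $\lambda=0$ the entropy alone must control $\mathcal{J}_\omega$, which is essentially a Moser-Trudinger type statement. The slickest route I can see is the variational/convexity approach above, leveraging the rigidity provided by discreteness of $\mathrm{Aut}(X)$; a more direct proof would likely require establishing a singular analogue of the Moser-Trudinger inequality on $(X, \omega)$, which, while plausible given the $L^p$ regularity of $\mu$ from~\eqref{eq:FLp}, is technically more involved.
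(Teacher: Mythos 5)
Your proposal is correct in outline but diverges from the paper's argument in a way worth discussing. For $\lambda=1$ you and the paper both defer to Darvas~\cite{DarvasMetric}. The interesting divergence is for $\lambda\leq 0$, which the paper handles uniformly and far more cheaply than either of your two routes.

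The paper's argument for $\lambda\in\{0,-1\}$ first discards the $-\lambda\mathcal{J}_\omega$ term using $\mathcal{J}_\omega\geq 0$, and then bounds the entropy alone from below by a multiple of $\mathcal{J}_\omega$. The key input is Tian's $\alpha$-invariant on the resolution: there is $\alpha>0$ with $\int_Y e^{-\alpha\pi^*u}\,\Omega\leq C_1$ for all normalized $u$, which combined with the $L^p$ bound on $F=\pi^*\mu/\Omega$ and H\"older yields $\int_{X^{reg}} e^{-\alpha q^{-1}u}\,d\mu\leq C_2$. A Jensen/convexity step as in Song-Weinkove \cite[Lemma 4.1]{SW08} then gives $\int\log(\omega_u^n/\mu)\,\omega_u^n \geq \alpha q^{-1}\int(-u)\,\omega_u^n - C_3$, and the elementary inequality $\int(-u)\,\omega_u^n\geq\mathcal{J}_\omega(u)$ closes the argument with $\delta=\alpha q^{-1}$. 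This is a few lines of elementary estimates and requires no discreteness of the automorphism group and no variational theory.

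Your $\lambda=-1$ shortcut (Jensen on the entropy plus the favorable $+\mathcal{J}_\omega$ term, giving $\delta=1$) is correct, clean, and in fact sharper than what the paper extracts; no complaints there. However, your $\lambda=0$ route is where the comparison becomes instructive. You propose to invoke the Darvas-Rubinstein-style coercivity principle: convexity of entropy along weak geodesics (Berman-Berndtsson) plus uniqueness of the minimizer. This is a legitimate but considerably heavier path: the abstract "convexity $+$ unique minimizer $\Rightarrow$ coercivity" implication is itself a substantive theorem whose hypotheses (lower semicontinuity, compactness, the growth comparison between the relevant functionals, and the translation from $d_1$-coercivity to $\mathcal{J}_\omega$-coercivity) all need to be verified in the singular setting, and you have not spelled these out. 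You call the direct Moser-Trudinger-type route "technically more involved," but this misjudges the difficulty: the paper's argument via $\alpha$-invariant positivity is precisely that direct route and it is elementary. Positivity of $\alpha(L)$ is automatic here, the $L^p$ property of $F$ is a known feature of klt singularities from \cite{EGZ}, and the rest is H\"older and Jensen. So while your $\lambda=0$ argument is not wrong in principle, it trades a one-page elementary computation for an appeal to a deep variational framework, and as written it leaves the key coercivity-principle step as a black box.
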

\begin{proof}
  The case when $\lambda=1$ is well known, going back to
  Tian~\cite{Tian97}
  in the smooth
  setting, who proved a weaker version of properness. The properness
  in the form \eqref{eq:Mprop1} was shown by
  Phong-Song-Sturm-Weinkove~\cite{PSSW}.
  In the singular setting the result was shown in
  Darvas~\cite[Theorem 2.2]{DarvasMetric}. Note that we are assuming
  that $X$ has discrete automorphism group and admits a
  K\"ahler-Einstein metric. 

  The cases $\lambda=0, -1$ are much easier (see Tian~\cite{Tianbook} or
  Song-Weinkove~\cite[Theorem 1.2]{SW08} for a similar
  result). For this, note that $\mathcal{J}_\omega \geq 0$, 
  and so when $\lambda\leq 0$, we have
  \[ M_\omega(u) \geq \frac{1}{V} \int_{X^{reg}}
    \log\left(\frac{\omega_u^n}{\mu}\right) \omega_u^n. \]
  At the same time, using Tian~\cite{Tianalpha}, we know that there
  are $\alpha, C_1 > 0$ such that for all $u\in PSH_\omega(X)$ with
  $\sup_X u=0$ we have
  \[ \int_{Y} e^{-\alpha \pi^*u}\, \Omega < C_1,\]
  and so with $p^{-1} + q^{-1} =1$ (such that $F$ in \eqref{eq:FLp} is
  in $L^p$) we have
  \[ \int_{X^{reg}} e^{-\alpha q^{-1} u}\, d\mu &=
    \int_{\pi^{-1}(X^{reg})} e^{-\alpha q^{-1} \pi^*u}\, \pi^*\mu \\
    &= 
  \int_{\pi^{-1}(X^{reg})} e^{-\alpha q^{-1} \pi^*u }\, F
  \Omega \\
  &\leq \left(\int_{\pi^{-1}(X^{reg})} e^{-\alpha \pi^*u}\,
    \Omega\right)^{1/q} \left(\int_{\pi^{-1}(X^{reg})}
    F^p\Omega\right)^{1/p} \\
  &\leq C_2. 
\]
Using the convexity of the exponential function we then have, as in
\cite[Lemma 4.1]{SW08}, 
\[ \int_{X^{reg}} \log\left(\frac{\omega_u^n}{\mu}\right)\, \omega_u^n
  \geq \alpha q^{-1}\int_{X^{reg}} (-u)\, \omega_u^n - C_3, \]
for all $u\in PSH_\omega(X)$ with $\sup_X u =0$. As the same
time, if $\sup_X u =0$ and $u\in L^\infty$, then we have
\[ \int_{X^{reg}} (-u)\, \omega_u^n \geq \mathcal{J}_\omega(u). \]
To see this, note that
\[ \int_{X^{reg}} (-u)\, \omega_u^n &= \int_0^1 \frac{d}{dt}
  \int_{X^{reg}} (-tu)\, \omega_{tu}^n\, dt \\
  &= \int_0^1 \int_{X^{reg}} (-u)\, \omega_{tu}^n - n tu \ddbar
  u\wedge \omega_{tu}^{n-1}\, dt \\
  &\geq \int_0^1 n \int_{X^{reg}} u (\omega - \omega_{tu}) \wedge
  \omega_{tu}^{n-1}\, dt \\
  &= \int_0^1 \frac{d}{dt}\mathcal{J}_\omega(tu)\, dt =
  \mathcal{J}_\omega(u). \]
So combining the estimates above we obtain \eqref{eq:Mprop1}.
\end{proof}

Suppose that $\pi:Y \to X$ is a projective resolution such that the anticanonical
bundle $-K_Y$ is relatively nef. Let us write $E$ for the exceptional
divisor. The relatively nef assumption implies (see
Boucksom-Jonsson-Trusiani~\cite{BJT24}), that we have a smooth
volume form $\Omega$ on $Y$, whose Ricci form $\mathrm{Ric}(\Omega)$
satisfies
\[ \mathrm{Ric}(\Omega) \geq -C \pi^*\omega \]
for suitable $C > 0$. Let us fix a smooth K\"ahler metric $\eta_Y$ on
$Y$, with volume form $\Omega$, and we let $\eta_\epsilon =
\pi^*\omega + \epsilon\eta_Y$, which is a smooth K\"ahler metric on
$Y$. For any closed $(1,1)$-form $\alpha$ on $Y$, we define the
functional $\mathcal{J}_{\eta_\epsilon, \alpha}$ on $PSH_{\eta_\epsilon}(Y)\cap L^\infty$ by
letting $\mathcal{J}_{\eta_\epsilon, \alpha}(0)=0$ and
its variation
\[ \delta\mathcal{J}_{\eta_\epsilon, \alpha}(u) =
  n\int_Y \delta u \big(\alpha - c_\alpha\eta_{\epsilon,
    u}\big) \wedge \eta_{\epsilon, u}^{n-1}. \]
Here $c_\alpha$ is the constant determined by
\[ \int_Y  \big(\alpha - c_\alpha\eta_{\epsilon,
    u}\big) \wedge \eta_{\epsilon, u}^{n-1} = 0, \]
and $\eta_{\epsilon, u} = \eta_\epsilon + \ddbar u$.

We write
$\mathcal{J}_{\eta_\epsilon} = \mathcal{J}_{\eta_\epsilon,
  \eta_\epsilon}$, which is consistent with the earlier definition. 
The twisted Mabuchi K-energy in the class $[\eta_\epsilon]$ is defined, for
$u\in PSH_{\eta_\epsilon}(Y)\cap L^\infty$ by
\[ M_{\eta_\epsilon,s}(u) = \int_Y \log \left(\frac{\eta_{\epsilon,
        u}^n}{\Omega}\right)\, \eta_{\epsilon, u}^n +
    \mathcal{J}_{\eta_\epsilon, s\eta_\epsilon -
      \mathrm{Ric}(\Omega)}. \]
  Note that
  \[\label{eq:twistedMbigger}
    M_{\eta_\epsilon,s}(u)  \geq M_{\eta_\epsilon} :=
    M_{\eta_\epsilon, 0}\]
  for $s \geq 0$. 
The critical points of this functional are the twisted cscK metrics 
$\eta_{\epsilon, u}\in [\eta_\epsilon]$, satisfying
\[ R(\eta_{\epsilon, u}) - s\, \mathrm{tr}_{\eta_{\epsilon, u}} \eta_\epsilon=
  \mathrm{const.} \]

The following result uses our assumption that $-K_Y$ is
relatively nef.
\begin{lemma}\label{lem:KYnefproper}
  Assuming that $-K_Y$ is relatively nef,
  there is a constant $C_2 > 0$ such that $\mathcal{J}_{\eta_\epsilon,
    -\mathrm{Ric}(\Omega)} \geq - C_2 \mathcal{J}_{\eta_\epsilon}$ on
  $PSH_{\eta_\epsilon}(Y)\cap L^\infty$.
  In particular there are constants $s_0, \epsilon_0 > 0$ (depending on $(X,
  \omega_{KE})$) such that for $s \geq s_0$ and $\epsilon < \epsilon_0$ the twisted K-energy is
  proper: 
  \[ M_{\eta_\epsilon, s}(u) \geq \mathcal{J}_{\eta_\epsilon}(u), \]
  for all $u\in PSH_{\eta_\epsilon}(Y)\cap L^\infty$. 
\end{lemma}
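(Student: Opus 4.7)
The plan is to first establish the estimate $\mathcal{J}_{\eta_\epsilon,-\mathrm{Ric}(\Omega)}\geq -C_2\mathcal{J}_{\eta_\epsilon}$ from the relatively nef hypothesis, and then derive the properness of $M_{\eta_\epsilon,s}$ as a formal consequence. The key algebraic observation is that $\mathcal{J}_{\eta_\epsilon,\alpha}(u)$ is linear in $\alpha$: the normalizing constant $c_\alpha=[\alpha]\cdot[\eta_\epsilon]^{n-1}/[\eta_\epsilon]^n$ is cohomologically linear, the prescribed variation is linear in $\alpha$, and $\mathcal{J}_{\eta_\epsilon,\alpha}(0)=0$. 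The relatively nef hypothesis, via the Boucksom-Jonsson-Trusiani construction recalled earlier, provides a smooth volume form $\Omega$ on $Y$ with $\mathrm{Ric}(\Omega)\geq -C\pi^*\omega\geq -C\eta_\epsilon$ pointwise, for a constant $C>0$ uniform in $\epsilon$. Setting $\gamma:=C\eta_\epsilon+\mathrm{Ric}(\Omega)\geq 0$, linearity yields
\[
\mathcal{J}_{\eta_\epsilon,-\mathrm{Ric}(\Omega)}(u)=C\mathcal{J}_{\eta_\epsilon}(u)-\mathcal{J}_{\eta_\epsilon,\gamma}(u),
\]
reducing the first claim to an upper bound $\mathcal{J}_{\eta_\epsilon,\gamma}(u)\leq C_3\mathcal{J}_{\eta_\epsilon}(u)$ with $C_3$ uniform in $\epsilon$, after which $C_2:=\max\{0,C_3-C\}$ works.

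The upper bound is the main technical ingredient and the step I expect to be the chief obstacle. Since both functionals are invariant under adding constants to $u$, I first normalize $\sup u=0$. Expanding $\eta_{\epsilon,su}=\eta_\epsilon+s\ddbar u$ in the variational formula for $\mathcal{J}_{\eta_\epsilon,\gamma}$ and performing a single integration by parts to split off the standard $\mathcal{J}_{\eta_\epsilon}$-piece gives the identity
\[
\mathcal{J}_{\eta_\epsilon,\gamma}(u)=c_\gamma\mathcal{J}_{\eta_\epsilon}(u)+n\int_0^1\int_Y u\,(\gamma-c_\gamma\eta_\epsilon)\wedge\eta_{\epsilon,su}^{n-1}\,ds.
\]
The coefficient $c_\gamma$ is uniformly bounded because $[\gamma]\in C[\eta_\epsilon]+c_1(Y)$ is uniformly bounded. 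To control the remainder term, I would appeal to standard Aubin-type inequalities on the smooth compact K\"ahler manifold $(Y,\eta_\epsilon)$: for any closed $(1,1)$-form $\alpha$ in a bounded cohomology class, the integral $|\int_Y u\,\alpha\wedge\eta_{\epsilon,su}^{n-1}|$ is bounded by $C'\mathcal{J}_{\eta_\epsilon}(u)$ uniformly in $s\in[0,1]$, using the comparability between $\mathcal{J}_{\eta_\epsilon}$ and the Aubin $I$-functional and elementary integration by parts identities. The uniformity in $\epsilon$ is guaranteed by the uniform boundedness of the relevant cohomology classes.

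For the properness, the definition of $M_{\eta_\epsilon,s}$ together with linearity in the twist gives
\[
M_{\eta_\epsilon,s}(u)=\mathrm{Ent}_\Omega(u)+s\mathcal{J}_{\eta_\epsilon}(u)+\mathcal{J}_{\eta_\epsilon,-\mathrm{Ric}(\Omega)}(u)\geq\mathrm{Ent}_\Omega(u)+(s-C_2)\mathcal{J}_{\eta_\epsilon}(u).
\]
Rescaling $\Omega$ by the positive factor $[\eta_\epsilon]^n/\int_Y\Omega$ (which preserves $\mathrm{Ric}(\Omega)$, and hence both $C$ and $C_2$), Jensen's inequality applied to $t\mapsto t\log t$ yields $\mathrm{Ent}_\Omega(u)\geq 0$. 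This rescaling factor stays bounded away from $0$ and $\infty$ for $\epsilon<\epsilon_0$ small, since $[\eta_\epsilon]^n\to[\omega_{KE}]^n>0$. Taking $s_0:=C_2+1$ then gives $M_{\eta_\epsilon,s}(u)\geq\mathcal{J}_{\eta_\epsilon}(u)$ for all $s\geq s_0$ and $\epsilon<\epsilon_0$, as required.
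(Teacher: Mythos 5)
Your proposal follows essentially the same route as the paper: expand $\mathcal{J}_{\eta_\epsilon,-\mathrm{Ric}(\Omega)}$ via the variational formula, peel off a multiple of $\mathcal{J}_{\eta_\epsilon}$ by linearity in the twist form, and estimate what remains. The paper's own proof (more tersely) bounds $-\mathcal{J}_{\eta_\epsilon,\mathrm{Ric}(\Omega)}(u)$ below by $-C_1n\int_0^1\int_Y(-u)(\eta_\epsilon+\eta_{\epsilon,tu})\wedge\eta_{\epsilon,tu}^{n-1}\,dt$ and then asserts this is $\geq -C_2\mathcal{J}_{\eta_\epsilon}(u)$.

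The step you yourself flag as ``the chief obstacle'' is indeed a genuine gap, and the ``standard Aubin-type inequality'' you invoke, namely $\bigl|\int_Y u\,\alpha\wedge\eta_{\epsilon,su}^{n-1}\bigr|\leq C'\mathcal{J}_{\eta_\epsilon}(u)$ for $\alpha=\gamma-c_\gamma\eta_\epsilon$, is false. The issue is a homogeneity mismatch. For $u=\delta\phi$ with $\phi$ a fixed nonconstant $\eta_\epsilon$-psh function normalized by $\sup\phi=0$, the left-hand side is of order $\delta$: its linearization $n\int_Y\phi(\gamma-c_\gamma\eta_\epsilon)\wedge\eta_\epsilon^{n-1}$ does not vanish in general, since $\phi$ is nonconstant even though the measure $(\gamma-c_\gamma\eta_\epsilon)\wedge\eta_\epsilon^{n-1}$ has zero total mass. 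Meanwhile $\mathcal{J}_{\eta_\epsilon}(\delta\phi)=O(\delta^2)$. In dimension $n=1$ this is transparent: $\gamma-c_\gamma\eta_\epsilon=\ddbar v$ for some smooth $v$, and $\int_Y u\,\ddbar v=-\int_Y\sqrt{-1}\,\partial u\wedge\bar\partial v$ is linear in $\Vert\partial u\Vert_{L^2}$, while $\mathcal{J}_{\eta_\epsilon}(u)=\tfrac12\Vert\partial u\Vert_{L^2}^2$. The very same objection applies to the final line of the paper's proof, so the pointwise-in-$u$ inequality as stated, with no additive constant, does not appear to be correct.

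The fix is to allow an additive constant, which is all that the downstream application (and Chen--Cheng's existence theorem) actually needs. The relevant uniform input is $L^1$-compactness of normalized psh functions: for $u\in PSH_{\eta_\epsilon}(Y)$ with $\sup_Y u=0$ and $\epsilon\leq 1$ we have $u\in PSH_{\eta_1}(Y)$ and $\eta_\epsilon^n\leq\eta_1^n$, hence $\int_Y(-u)\eta_\epsilon\wedge\eta_{\epsilon,su}^{n-1}\leq\int_Y(-u)\eta_\epsilon^n\leq\int_Y(-u)\eta_1^n\leq B$ for a constant $B$ independent of $\epsilon$ and $u$. Inserting this into either decomposition yields $\mathcal{J}_{\eta_\epsilon,-\mathrm{Ric}(\Omega)}(u)\geq -C_2\mathcal{J}_{\eta_\epsilon}(u)-B'$, and hence $M_{\eta_\epsilon,s}(u)\geq\mathcal{J}_{\eta_\epsilon}(u)-B'$ for $s\geq s_0$, which is exactly the form of properness (with a constant, cf. Proposition~\ref{prop:Mproper}) that is used in the proof of Theorem~\ref{thm:cscKapprox}. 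The remaining ingredients of your argument --- the linearity observation, the rescaling of $\Omega$ to make the entropy nonnegative, and the uniform boundedness of $c_\gamma$ --- are all correct and match the paper.
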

\begin{proof}
   For $u\in PSH_{\eta_\epsilon}(Y)\cap L^\infty$ with $\sup_Yu = 0$,
  we have
  \[ -\mathcal{J}_{\eta_\epsilon, \mathrm{Ric}(\Omega)}(u) &=
    n \int_0^1
    \int_Y (-u) (\mathrm{Ric}(\Omega) - c \eta_{\epsilon, tu})\wedge
    \eta_{\epsilon, tu}^{n-1} \\
    &\geq - n\int_0^1
    \int_Y (-u) (C \pi^*\omega + c \eta_{\epsilon, tu})\wedge
    \eta_{\epsilon, tu}^{n-1} \\
    &\geq - C_1 n\int_0^1
    \int_Y (-u) (\eta_{\epsilon} + \eta_{\epsilon, tu})\wedge
    \eta_{\epsilon, tu}^{n-1} \\
    &\geq -C_2 J_{\eta_\epsilon}(u). 
  \]
  
  Note that since the entropy term is nonnegative, we have
  $M_{\eta_\epsilon, s} \geq \mathcal{J}_{\eta_\epsilon,
    s\eta_\epsilon - \mathrm{Ric}(\Omega)} $ and also
  \[ \mathcal{J}_{\eta_\epsilon, s\eta_\epsilon -
      \mathrm{Ric}(\Omega)} = s \mathcal{J}_{\eta_\epsilon,
      \eta_\epsilon} -
    \mathcal{J}_{\eta_\epsilon,\mathrm{Ric}(\Omega)}. \]
  It follows that for $s > C_2 + 1$,
  \[ M_{\eta_\epsilon, s}(u) \geq J_{\eta_\epsilon}(u). \]
\end{proof}

It follows from this result, using the work of
Chen-Cheng~\cite{ChenCheng}, that if $\epsilon < \epsilon_0$ and $s
> s_0$, then there exists a twisted cscK metric $\eta_{\epsilon, u}
\in [\eta_{\epsilon}]$ satisfying
\[ \label{eq:twistedcscK} R(\eta_{\epsilon,u}) - s\, \mathrm{tr}_{\eta_{\epsilon,u}}
  \eta_\epsilon = \mathrm{const.} \]
We will use a continuity method to construct twisted cscK metrics
in $[\eta_\epsilon]$ for sufficiently small $\epsilon$, that satisfy
\eqref{eq:twistedcscK} for $s\in [0,s_0]$, and so in particular we
obtain a cscK metric in $[\eta_\epsilon]$. For this we will need a
refinement of Chen-Cheng's estimates, which are uniform in the
degenerating cohomology classes $[\eta_\epsilon]$ as $\epsilon \to
0$. Such a refinement was shown by Zheng~\cite{Zheng20} who worked in
the more complicated setting of cscK metrics with cone
singularities. See also Pan-T\^o~\cite{PT24}. 

Note that in Zheng's work the cscK metrics are expressed relative
to metrics with a fixed volume form, rather than metrics of the form
$\eta_\epsilon$. Let us write $\tilde{\eta_\epsilon} \in
[\eta_\epsilon]$ for the metrics with $\tilde{\eta_\epsilon}^n = c_\epsilon
\Omega$ provided by Yau~\cite{Yau78}, where the $c_\epsilon$ are
bounded above and below uniformly. Note that we have
$\tilde{\eta_\epsilon} = \eta_\epsilon + \ddbar v_\epsilon$ with a
uniform bound on $\sup |v_\epsilon|$, independent of $\epsilon$, so it
does not matter whether we obtain $L^\infty$ bounds for potentials
relative to $\eta_\epsilon$ or relative to $\tilde{\eta_\epsilon}$.

In order to state the estimates in a form that we will use, we make
the following definition.

\begin{definition}
  Fix an exhaustion $K_1 \subset K_2 \subset \ldots \subset
  \pi^{-1}(X^{reg})$ of $\pi^{-1}(X^{reg})$ by compact sets.  Let
  $a_0, a_1, \ldots $ be a sequence of positive numbers, and $p > 1$. We say that a
  potential $u \in PSH_{\eta_\epsilon}(Y)$ is $\{p,a_j\}_{j\geq
    0}$-bounded, if we have
  \[ \left\Vert \frac{\eta_{\epsilon,
          u}^n}{\Omega}\right\Vert_{L^p(\Omega)} + 
      \sup_Y |u| \leq a_0, \qquad \sup_{K_j}
      \left|\log \frac{\eta_{\epsilon, u}^n}{\Omega}\right| + \Vert u\Vert_{C^4(K_j, \eta_Y)} \leq
    a_j. \]
  In other words such a potential is uniformly bounded globally, has
  volume form in $L^p$, is locally bounded in $C^4$, and its volume
  form is locally bounded above and below away from the exceptional
  divisor $E$. 
\end{definition}

We then have the following. 
\begin{prop}\label{prop:Zheng}
  Suppose that $\epsilon\in (0,1), s\in (0,s_0]$, and $\eta_{\epsilon,
    u} := \eta_\epsilon + \ddbar u$ satisfies the twisted cscK equation
  \[ R(\eta_{\epsilon, u}) - s \mathrm{tr}_{\eta_{\epsilon, u}}
    \eta_\epsilon = c_{s, \epsilon}, \]
  where $c_{s,\epsilon}$ is a constant determined by $s, \epsilon$
  through cohomological data. Assume that $\sup u = 0$.
  Let $\phi = \log |s_E|^2$, where $s_E$ is a section of
  $\mathcal{O}(E)$ vanishing along $E$, and we are using a smooth
  metric on $\mathcal{O}(E)$ to compute the norm.
  There are constants $C, a > 0$, $p > 1$, 
  depending on $Y, \eta_Y, \eta_0, s_0$, as well as on the entropy
  $\int_Y \log\left(\frac{\eta_{\epsilon, u}^n}{\Omega}\right)\,
    \eta_{\epsilon, u}^n$, but not on $\epsilon, s$,
  such that we have the following estimates:
  \begin{enumerate}
  \item \[\sup_Y\left( \log\frac{\eta_{\epsilon,u}^n}{\Omega} +
      a\phi\right) + \left\Vert \frac{\eta_{\epsilon,u}^n}{\Omega}
    \right\Vert_{L^p(\eta_Y)} + \sup_Y |u| < C, \]
  \item  \[ \inf_Y \left( \log\frac{\eta_{\epsilon,u}^n}{\Omega} -
        a\phi\right) > C, \]
  \item  \[ \Vert e^{a\phi}\mathrm{tr}_{\eta_Y} \eta_{\epsilon, u}\Vert_{L^p(\eta_Y)} <
      C. \]
  \end{enumerate}

  In particular there exist $p > 1$ and $a_j > 0$ such that $u$ is
  $\{p, a_j\}_{j\geq 0}$-bounded. 
\end{prop}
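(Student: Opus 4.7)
The proof will follow Chen-Cheng's strategy for a priori estimates of the cscK equation, adapted as in Zheng and Pan-T\^o to the setting of a family of degenerating K\"ahler classes $[\eta_\epsilon]$ on a resolution. The goal is to ensure that all constants are uniform in $\epsilon$, and to track the dependence on $\phi = \log|s_E|^2$, since $\pi^*\omega$ fails to be strictly positive along the exceptional divisor $E$.

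The plan is to first rewrite the twisted cscK equation as a coupled Monge-Amp\`ere system. Setting $F = \log(\eta_{\epsilon,u}^n/\Omega)$, the equation becomes $\eta_{\epsilon,u}^n = e^F \Omega$ together with
\[ \Delta_{\eta_{\epsilon,u}} F = \mathrm{tr}_{\eta_{\epsilon,u}}\bigl(\mathrm{Ric}(\Omega) - s\eta_\epsilon\bigr) - c_{s,\epsilon}. \]
The relatively nef assumption on $-K_Y$ gives $\mathrm{Ric}(\Omega) \geq -C\pi^*\omega \geq -C\eta_\epsilon$, which combined with $0\leq s\leq s_0$ makes the twisting term essentially harmless: the right-hand side is bounded above by a multiple of $\mathrm{tr}_{\eta_{\epsilon,u}}\eta_\epsilon$ plus constants. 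Using the entropy bound, testing the equation for $F$ against $e^{\alpha F}$ for small $\alpha >0$, integrating by parts, and using a Sobolev inequality for $\eta_{\epsilon,u}$ uniform in $\epsilon$ (as provided by the Guo-Phong-Song-Sturm framework in the degenerating family setting), one obtains an a priori bound $\Vert e^F\Vert_{L^p(\Omega)}\leq C$. An Eyssidieux-Guedj-Zeriahi/Ko\l odziej-type $L^\infty$ argument in the version of Guo-Phong-Tong then converts this $L^p$ bound into $\sup|u|\leq C$, uniformly in $\epsilon$, yielding the scalar parts of (1).

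The weighted sup and inf bounds in (1) and (2) are then obtained by applying the maximum principle to auxiliary functions of the form $F \pm a\phi - A(u - \delta\phi)$, with constants chosen depending on the previously established bounds. Since $\phi\to -\infty$ along $E$, the relevant extrema are attained on $\pi^{-1}(X^{reg})$, where the equation for $F$ together with $\mathrm{Ric}(\Omega)+s\eta_\epsilon\geq -C\pi^*\omega$ allows one to absorb the twisting term and close up the maximum principle. Finally, (3) follows from a Chern-Lu type inequality for $\mathrm{tr}_{\eta_Y}\eta_{\epsilon,u}$: computing $\Delta_{\eta_{\epsilon,u}}\log \mathrm{tr}_{\eta_Y}\eta_{\epsilon,u}$ and using the already established two-sided bound on $F$ and the bisectional curvature of $\eta_Y$ gives an inequality which, after multiplication by $e^{a\phi}$ to cancel the possible blow-up of $\mathrm{tr}_{\eta_Y}\eta_{\epsilon,u}$ along $E$ as $\epsilon\to 0$, yields the required $L^p$ bound.

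The principal obstacle throughout is uniformity as $\epsilon\to 0$: both the Sobolev constant and the constants in the Ko\l odziej-type $L^\infty$ estimate must be controlled independently of $\epsilon$, despite the fact that $\eta_\epsilon$ is only uniformly positive away from $E$. This is precisely the content of the refinements of Chen-Cheng due to Zheng (originally in the cone-angle setting, but whose proof transfers) and of the closely related estimates of Pan-T\^o. The key observation is that the cohomological invariants of $[\eta_\epsilon]$ and the fixed volume form $\Omega$ remain uniformly comparable, so that the Sobolev inequality and maximum-principle arguments can all be implemented with constants independent of $\epsilon$, with the $e^{a\phi}$ weights absorbing the loss along the exceptional divisor.
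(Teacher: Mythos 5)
The paper's proof of this Proposition is simply a citation: the three estimates are asserted to be \cite[Propositions 5.12, 5.15, 5.18]{Zheng20}, and the final $\{p,a_j\}$-boundedness is deduced from the trace bound by local elliptic theory. Your proposal reconstructs essentially what those propositions of Zheng prove, following the Chen--Cheng framework, and the outline is consistent with the cited source and with the parallel estimates in \cite{PT24}: the coupled Monge--Amp\`ere reformulation, the integral $L^p$ bound on $e^F$ via entropy and integration by parts, the Ko\l odziej-type $L^\infty$ bound for $u$, the maximum principle on auxiliary quantities shifted by $a\phi$ to handle the degeneration of $\eta_\epsilon$ along $E$, and the Chern--Lu inequality for the trace estimate are all the right ingredients, and the rationale you give for uniformity in $\epsilon$ (uniform comparability of $[\eta_\epsilon]$ and $\Omega$, the $e^{a\phi}$ weight absorbing the loss along $E$) matches the intent of the statement.

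One point worth flagging, since you phrase it loosely: in the Chen--Cheng scheme (and hence in Zheng's degenerate-class version) the Sobolev inequality that enters the Moser-type iteration is taken with respect to a \emph{fixed} background metric (here $\eta_Y$ or the fixed volume form $\Omega$), not with respect to the unknown cscK metric $\eta_{\epsilon,u}$. Invoking the Guo--Phong--Song--Sturm Sobolev bounds, which concern the evolving/unknown metrics, is not the mechanism actually used by Zheng or Pan--T\^o; the relevant iteration exploits that the $L^p$ control of $\eta_{\epsilon,u}^n/\Omega$ and the uniform cohomological bounds allow one to work against the fixed background. This does not affect the correctness of the overall outline, but it is a mismatch in the stated source of uniformity. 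Also, the reference to ``Guo-Phong-Tong'' for the $L^\infty$ step appears to be a slip; in this paper's framework the $L^p\Rightarrow L^\infty$ implication for the potential is via the Eyssidieux--Guedj--Zeriahi/Ko\l odziej approach you already mention.
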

\begin{proof}
  The estimates (1) are shown in \cite[Proposition 5.12]{Zheng20}, the
  estimate (2) is in \cite[Proposition 5.15]{Zheng20}, and the estimate
  (3) is \cite[Proposition 5.18]{Zheng20}. Note that the $L^p$-bound
  on the trace of $\eta_{\epsilon, u}$ implies higher order estimates
  for $u$ on compact sets away from $E$. This leads to the $\{p,
  a_j\}$-boundedness of $u$. See also \cite[Theorem C]{PT24} for
  similar estimates. 
\end{proof}

Next we show that by Proposition~\ref{prop:Mproper}, the Mabuchi
energy $M_{\eta_\epsilon}$ is proper on $\{p, a_j\}$-bounded classes of potentials,
when $\epsilon$ is sufficiently small.

\begin{prop}\label{prop:smallepsproper}
  Given $p > 1$ and  a sequence $\{a_j\}_{j \geq 0}$, let $V \subset
  PSH_{\eta_\epsilon}(Y)$ denote the $\{p, a_j\}_{j\geq 0}$-bounded
  potentials. Then for sufficiently small $\epsilon$, depending on the
  $p, a_j$, the K-energy $M_{\eta_\epsilon}$ is proper on $V$ in the
  sense that
  \[ M_{\eta_\epsilon}(u) > \delta \mathcal{J}_{\eta_\epsilon}(u)- B_2,
    \text{ for all } u\in V. \]
  Here $\delta$ is the same constant as in Proposition~\ref{prop:Mproper},
  while $B_2$ is a constant depending on $(X, \omega)$ and
  $\Omega$, but not on the $p, a_j$. 
\end{prop}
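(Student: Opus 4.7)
The plan is to argue by contradiction, reducing properness of $M_{\eta_\epsilon}$ on $Y$ to that of $M_\omega$ on $X$ given by Proposition~\ref{prop:Mproper}. Let $B$ denote the constant from Proposition~\ref{prop:Mproper}, and set $B_2 := B+1$. If the conclusion fails, we extract a sequence $\epsilon_k\to 0$ and potentials $u_k \in V \cap PSH_{\eta_{\epsilon_k}}(Y)$ with $\sup u_k = 0$, satisfying
\[ M_{\eta_{\epsilon_k}}(u_k) \leq \delta\, \mathcal{J}_{\eta_{\epsilon_k}}(u_k) - B - 1. \]
The $\{p, a_j\}$-boundedness provides $|u_k| \leq a_0$ globally, uniform $C^4$ bounds on each compact $K_j \subset \pi^{-1}(X^{reg})$, and $\eta_{\epsilon_k,u_k}^n/\Omega$ uniformly in $L^p(\Omega)$. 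Along a subsequence, $u_k \to u_\infty$ in $L^1(Y)$ and in $C^3_{\mathrm{loc}}(\pi^{-1}(X^{reg}))$. Since $\eta_{\epsilon_k} \to \pi^*\omega$ smoothly, $u_\infty$ is $\pi^*\omega$-plurisubharmonic and descends via $\pi$ to $\tilde u_\infty \in PSH_\omega(X) \cap L^\infty$ with $\sup \tilde u_\infty = 0$. Equi-integrability of the densities $\eta_{\epsilon_k, u_k}^n/\Omega$, together with locally uniform convergence on $\pi^{-1}(X^{reg})$ and $\Omega(E)=0$, yields weak convergence $\eta_{\epsilon_k, u_k}^n \rightharpoonup \omega_{\tilde u_\infty}^n$ of measures on $Y$.

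Next I would pass to the limit in both sides of the assumed inequality. The Aubin--Yau $\mathcal{J}$-functional is continuous along this convergence, giving $\mathcal{J}_{\eta_{\epsilon_k}}(u_k) \to \mathcal{J}_\omega(\tilde u_\infty)$. For $M_{\eta_\epsilon}(u) = \mathrm{Ent}(\eta_{\epsilon,u}^n|\Omega) + \mathcal{J}_{\eta_\epsilon, -\mathrm{Ric}(\Omega)}(u)$, one uses the identity $\mathrm{Ric}(\Omega) - \mathrm{Ric}(\pi^*\mu) = \ddbar \log F$ on $Y$ (from $\pi^*\mu = F\Omega$) together with $\mathrm{Ric}(\mu) = -\lambda \omega$ on $X^{reg}$ to absorb the $\log F$ contribution arising from the change of reference measure $\Omega \mapsto \pi^*\mu$ in the entropy into the twist term $\mathcal{J}_{\eta_\epsilon, -\mathrm{Ric}(\Omega)}$; in the limit this yields $M_\omega(\tilde u_\infty) = \mathrm{Ent}(\omega_{\tilde u_\infty}^n|\mu) - \lambda\,\mathcal{J}_\omega(\tilde u_\infty)$. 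Lower semi-continuity of the entropy under weak convergence with equi-integrable $L^p$ densities then gives $\liminf_k M_{\eta_{\epsilon_k}}(u_k) \geq M_\omega(\tilde u_\infty)$. Combined with the contradiction hypothesis this forces $M_\omega(\tilde u_\infty) \leq \delta\,\mathcal{J}_\omega(\tilde u_\infty) - B - 1$, contradicting Proposition~\ref{prop:Mproper}.

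The main obstacle will be justifying these integral manipulations in a neighborhood of the exceptional divisor $E$: weak convergence $\eta_{\epsilon_k,u_k}^n \rightharpoonup \omega_{\tilde u_\infty}^n$ with no mass escaping onto $E$, and integrability of $\log F$ against both the varying and the limiting Monge--Amp\`ere measures. The former is ensured by $L^p$-equi-integrability together with $\Omega(E) = 0$. The latter is the decisive use of the klt assumption \eqref{eq:FLp}: since $F \in L^p(\Omega)$ and the Monge--Amp\`ere densities are uniformly in $L^p$, H\"older's inequality controls the cross integrals uniformly and reconciles the bookkeeping between $\mathrm{Ent}(\cdot|\Omega)$ and $\mathrm{Ent}(\cdot|\pi^*\mu)$, reducing everything to a standard weak semi-continuity statement.
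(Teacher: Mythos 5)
Your overall strategy matches the paper's: argue by contradiction, extract a limit potential $u_\infty$ descending to $PSH_\omega(X)$, show $M_{\eta_{\epsilon_k}}(u_k)$ and $\mathcal{J}_{\eta_{\epsilon_k}}(u_k)$ converge to quantities attached to $u_\infty$, and invoke Proposition~\ref{prop:Mproper}. However, there are two genuine gaps.

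First, your choice $B_2 := B+1$ is wrong, and the error is not cosmetic. Changing the reference measure from $\Omega$ to $\pi^*\mu$ in the entropy produces $\int_Y \log F\,\eta_{0,u_\infty}^n$, and after absorbing the $u_\infty$-dependent part into the $\mathcal{J}$-functionals (using $\ddbar\log F = \mathrm{Ric}(\Omega)-\mathrm{Ric}(\pi^*\mu)$ and linearity of $\mathcal{J}_{\eta_0,\alpha}$ in $\alpha$), a \emph{constant} term $c := \int_Y \log F\,\eta_0^n$ survives. The correct convergence is $M_{\eta_{\epsilon_k}}(u_k)\to M_\omega(u_\infty)+c$, not $\liminf M_{\eta_{\epsilon_k}}(u_k)\geq M_\omega(u_\infty)$. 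Since $c$ has no definite sign, your claimed $\liminf$ inequality is false in general and $B_2 = B+1$ only yields a contradiction if $c > -1$. The paper takes $B_2 = B - \int_Y \log F\,\eta_0^n$; this is exactly why the statement warns that $B_2$ depends on $\Omega$. You should track $c$ explicitly rather than hoping it drops out of an entropy semicontinuity statement.

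Second, you treat the convergence $\mathcal{J}_{\eta_{\epsilon_k}}(u_k)\to\mathcal{J}_\omega(u_\infty)$ (and similarly $\mathcal{J}_{\eta_{\epsilon_k},-\mathrm{Ric}(\Omega)}$) as an appeal to $L^p$-equi-integrability of the Monge--Amp\`ere densities together with $\Omega(E)=0$. That is not enough: the variational definition of $\mathcal{J}$ involves \emph{mixed} wedge products $\alpha\wedge\eta_{\epsilon_k,u_k}^{n-1}$ and more generally $\eta_Y^{n-j}\wedge\eta_{\epsilon_k,u_k}^j$, which are not controlled by an $L^p$ bound on the top power $\eta_{\epsilon_k,u_k}^n$ alone. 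The paper handles this via an inductive integration-by-parts estimate (Equation~\eqref{eq:hintbound}), weighting by $h=-\log|s_E|^2$ to show that, uniformly in $k$, no mass of any mixed product concentrates near $E$, which is what actually yields \eqref{eq:YKint} and hence the convergence of the $\mathcal{J}$-functionals. Without an argument of this type, the step ``the $\mathcal{J}$-functional is continuous along this convergence'' is unjustified.
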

\begin{proof}
  We argue by contradiction. Suppose that we have a sequence
  $\epsilon_i \to 0$, and $u_i \in PSH_{\eta_{\epsilon_i}}(Y)$ that
  are $\{p, a_j\}_{j\geq 0}$-bounded, such that
  \[ \label{eq:Muileq} M_{\eta_{\epsilon_i}}(u_i) \leq \delta
    \mathcal{J}_{\eta_\epsilon}(u) - B_2, \] 
  for $B_2$ to be determined below. 
  Up to choosing a subsequence we can assume that $u_i \to u_\infty$
  in $L^1$ and also in $C^{3,\alpha}$ on compact sets away from the
  exceptional divisor $E$. We have $u_\infty \in PSH_{\pi^*\omega}(Y)$,
  and we have an identification $PSH_{\pi^*\omega}(Y) =
  PSH_{\omega}(X)$.
  We will next show that in terms of $F$ in \eqref{eq:FLp} we have 
  \[ M_{\eta_{\epsilon_i}}(u_i) &\to M_{\omega}(u_\infty) + \int_Y \log
    F\, \eta_0^n, \\
     \mathcal{J}_{\eta_{\epsilon_i}}(u_i) &\to \mathcal{J}_\omega(u_\infty). \]
  Let us first consider the relevant entropy terms. Note that
  \[ \int_Y \log \left(\frac{\eta_{{\epsilon_i},
          u_i}^n}{\Omega}\right) \eta_{\epsilon_i, u_i}^n = \int_Y
    \log\left(\frac{ \eta^n_{\epsilon_i, u_i}}{\Omega}\right)
    \frac{\eta_{\epsilon_i, u_i}^n}{\Omega}\,
    \Omega. \]
  Our assumptions mean that the integrand has a
  uniform $L^p(\Omega)$-bound for some $p > 1$. Using this,
  and the $C^{3,\alpha}$-convergence $u_i\to u_\infty$ on compact sets
  away from $E$, it follows that
  \[ \int_Y \log \left(\frac{\eta_{{\epsilon_i},
          u_i}^n}{\Omega}\right) \eta_{\epsilon_i, u_i}^n &\to
    \int_Y \log \left(\frac{\eta_{0, u_\infty}^n}{\Omega}\right)
    \eta_{0, u_\infty}^n. \]
  Using \eqref{eq:FLp} we have
  \[ \int_Y \log \left(\frac{\eta_{0,u_\infty}^n}{\Omega}\right)
   \eta_{0, u_\infty}^n &= \int_{X^{reg}} \log
    \left(\frac{\omega_{u_\infty}^n}{\mu}\right) 
    \omega_{u_\infty}^n + \int_Y \log F\,
    \eta_{0,u_\infty}^n. \]
  The last term can be computed by writing
  \[ \int_Y \log F\,
    \eta_{0,u_\infty}^n &= \int_Y \log F\, \eta_0^n + \int_0^1
    \frac{d}{dt} \int_Y \log F\, \eta_{0, tu_\infty}^n\, dt \\
    &= \int_Y \log F\, \eta_0^n + \int_0^1 n \int_Y u_\infty
    \ddbar\log F\, \wedge \eta_{0,t u_\infty}^{n-1}\,dt \\
    &= \int_Y \log F\, \eta_0^n + \int_0^1 n \int_Y u_\infty
    (\mathrm{Ric}(\Omega) - \mathrm{Ric}(\pi^*\mu))\wedge
    \eta_{0,u_\infty}^{n-1}\,dt \\
    &= \int_Y \log F\, \eta_0^n + \mathcal{J}_{\eta_0,
      \mathrm{Ric}(\Omega)}(u_\infty) -\lambda \mathcal{J}_{\omega}(u_\infty).
  \]
  For the last step note that $\eta_0$ vanishes along $E$, so although
  $\mathrm{Ric}(\pi^*\mu)$ has current contributions along $E$, the
  only part that survives in the integral is $\mathrm{Ric}(\mu) =
  \lambda\omega$ on $X$.  In conclusion we have that
  \[ \label{eq:entropyconverge}\int_Y \log \left(\frac{\eta_{{\epsilon_i},
          u_i}^n}{\Omega}\right) \eta_{\epsilon_i, u_i}^n &\to
    \int_{X^{reg}} \log\left(\frac{\omega^n_{u_\infty}}{\mu}\right)\,
    \omega_{u_\infty}^n + \int_Y \log F\, \eta_0^n \\ &\qquad +
    \mathcal{J}_{\eta_0, \mathrm{Ric}(\Omega)}(u_\infty) - \lambda
    \mathcal{J}_\omega(u_\infty). \]

  Next we consider the $\mathcal{J}$-functional terms. Consider a
  general smooth, closed (1,1)-form $\alpha$ on $Y$. We claim that we have
  $\mathcal{J}_{\eta_{\epsilon_i}, \alpha}(u_i) \to
  \mathcal{J}_{\eta_0, \alpha}(u_\infty)$. Using the variational
  definition of $\mathcal{J}$,
  the local $C^{3,\alpha}$-convergence, and the uniform
  $L^\infty$-bound for the $u_i$,  it is enough to show
  that for every $\kappa > 0$ there is a compact set $K\subset
  Y\setminus E$, such that
  \[ \label{eq:YKint} \int_{Y\setminus K} \eta_1 \wedge \eta_{\epsilon_i, u_i}^{n-1} +
    \int_{Y\setminus K} \eta_{\epsilon_i, u_i}^n < \kappa, \text{ for
      all } i. \]
  To see this, let $h= -\log |s_E|^2$, where $s_E$ is a section of the
  line bundle $\mathcal{O}(E)$ over $Y$ vanishing along the exceptional divisor
  $E$, and we use a smooth metric on $\mathcal{O}(E)$. We have
  \[ \ddbar h = \chi - [E], \]
  where $\chi$ is a smooth form on $Y$. We can assume that $h\geq 0$,
  and note that $h\to\infty$ along $E$. We show by induction that for each $k=0,
  \ldots, n$ there is a constant $C_k > 0$, independent of $i$, such that 
  \[\label{eq:hintbound} \int_Y h \eta_1^{n-k}\wedge \eta_{\epsilon_i, u_i}^k \leq C_k. \]
  For $k=0$ this is clear since $h$ has logarithmic
  singularities. Suppose that the bound has been established for a
  value of $k$. Then
  \[ \int_Y h \eta_1^{n-k-1}\wedge \eta_{\epsilon_i, u_i}^{k+1} &=
    \int_Y h \eta_1^{n-k-1} \wedge (\eta_{\epsilon_i} + \ddbar
    u_i)\wedge \eta_{\epsilon_i, u_i}^k \\
    &= \int_Y h \eta_{1}^{n-k-1}\wedge \eta_{\epsilon_i}\wedge \eta_{\epsilon_i, u_i}^k + \int
    u_i\ddbar h \wedge \eta_1^{n-k-1}\wedge \eta_{\epsilon_i, u_i}^k
    \\
    &\leq \int_Y h \eta_{1}^{n-k}\wedge \eta_{\epsilon_i, u_i}^k +
    \int_Y u_i \chi\wedge \eta_1^{n-k-1}\wedge \eta_{\epsilon_i,
      u_i}^k - \int_E u_i \eta_1^{n-k-1}\wedge \eta_{\epsilon_i, u_i}^k \\
    &\leq C_k(1+C) - \int_E u_i \eta_1^{n-k-1}\wedge \eta_{\epsilon_i,
      u_i}^k \\
    &\leq C_k(1+ C) + C', 
  \]
  where $C, C'$ depend on $\chi$ and the uniform $L^\infty$ bound for
  $u_i$.
  
  Since $h\to\infty$ along $E$, it follows from \eqref{eq:hintbound}
  that for any $\kappa > 0$ we can find a compact set $K\subset
  Y\setminus E$ such that \eqref{eq:YKint} holds. 
  It follows that
  \[ \mathcal{J}_{\eta_{\epsilon_i, -\mathrm{Ric}(\Omega)}}(u_i) \to
    \mathcal{J}_{\eta_{0, -\mathrm{Ric}(\Omega)}}(u_\infty), \]
  and also
  \[ \mathcal{J}_{\eta_{\epsilon_i}}(u_i) \to
    \mathcal{J}_{\omega}(u_\infty). \]

  From this, together with \eqref{eq:entropyconverge}, we have
  \[ M_{\eta_{\epsilon_i}}(u_i) &\to \int_{X^{reg}} \log\left(
      \frac{\omega_{u_\infty}^n}{\mu}\right)\, \omega_{u_\infty}^n
    -\lambda \mathcal{J}_\omega(u_\infty) + \int_Y \log F\, \eta_0^n
    \\
    &= M_{\omega}(u_\infty) + \int_Y \log F\, \eta_0^n. \]
  From \eqref{eq:Muileq} we therefore get
  \[ M_{\omega}(u_\infty) + \int_Y \log F\, \eta_0^n \leq
    \delta\mathcal{J}_\omega(u_\infty) - B_2. \]
  Choosing $B_2 = B - \int_Y \log F\, \eta_0^n$ for the $B$ in 
  Proposition~\ref{prop:Mproper}, we get a contradiction. 
\end{proof}

We are now ready to combine the different ingredients to prove the
main result of this section.
\begin{proof}[Proof of Theorem~\ref{thm:cscKapprox}]
  We will choose suitable $p > 0, a_j > 0$ shortly. By
  Proposition~\ref{prop:smallepsproper}, for a given $p, a_j$ we have
  some $\epsilon_1 > 0$ such that once $\epsilon < \epsilon_1$ and
  for any $s\geq 0$, we have
  \[ M_{\eta_\epsilon, s}(u) \geq M_{\eta_\epsilon}(u) > \delta \mathcal{J}_{\eta_\epsilon}(u)-
    B_2, \]
  for $\{p. a_j\}$-bounded potentials $u$. Recall that $\delta, B_2$
  do not depend on $\{p, a_j\}$. For small $\kappa > 0$ we
  have
  \[ M_{\eta_\epsilon, s}(u) &\geq \kappa \int_Y
    \left(\frac{\eta_{\epsilon, u}^n}{\Omega}\right)\, \eta_{\epsilon,
      u}^n + \kappa \mathcal{J}_{\eta_\epsilon,
      s\eta_\epsilon-\mathrm{Ric}(\Omega)}(u) + (1-\kappa)\delta
    \mathcal{J}_{\eta_\epsilon}(u) - (1-\kappa)B_2 \\
  &= \kappa \int_Y
    \left(\frac{\eta_{\epsilon, u}^n}{\Omega}\right)\, \eta_{\epsilon,
      u}^n + (\kappa s + (1-\kappa)\delta)
    \mathcal{J}_{\eta_\epsilon}(u) + \kappa
    \mathcal{J}_{\eta_\epsilon, -\mathrm{Ric}(\Omega)}(u) -
    (1-\kappa)B_2. 
  \]
  If $\kappa$ is chosen sufficiently small (depending on $\delta$),
  then by Lemma~\ref{lem:KYnefproper} we find that
  \[ \label{eq:twistedMproper10} M_{\eta_\epsilon, s}(u) &\geq \kappa
    \int_Y \log
    \left(\frac{\eta_{\epsilon, u}^n}{\Omega}\right)\, \eta_{\epsilon,
      u}^n - B_2. \]
  We also have
  \[ M_{\eta_\epsilon, s}(0) = \int_Y \log
    \left(\frac{\eta_{\epsilon}^n}{\Omega}\right)\, \eta_{\epsilon}^n
    < C_3, \]
  for a constant $C_3 > 0$ independent of $\epsilon$. Since twisted
  cscK metrics minimize the twisted Mabuchi K-energy, it follows that
  if $\eta_{\epsilon, u} \in [\eta_\epsilon]$ is a twisted cscK
  metric, then we have $M_{\eta_\epsilon, s}(u) < C_3$. From
  \eqref{eq:twistedMproper10} we get
  \[ \label{eq:entbound10} \int_Y \log \left(\frac{\eta_{\epsilon, u}^n}{\Omega}\right)\, \eta_{\epsilon,
      u}^n \leq \kappa^{-1}(C_3 + B_2), \]
  and in particular the entropy of $\eta_{\epsilon, u}$ is bounded
  independently of $\epsilon$. We  apply
  Proposition~\ref{prop:Zheng}. As long as $s\leq s_0$, for the $s_0$
  determined by Lemma~\ref{lem:KYnefproper}, we find that if
  $\eta_{\epsilon, u} = \eta_\epsilon + \ddbar u$ is a solution of the
  twisted cscK equation
  \[\label{eq:twistedcscK10} R(\eta_{\epsilon, u}) - s\,\mathrm{tr}_{\eta_{\epsilon, u}}
    \eta_\epsilon = \mathrm{const.}, \]
  then $u$ is $\{p, a_j\}$-bounded, for suitable $p, a_j$, determined
  by $s_0$ and the entropy bound \eqref{eq:entbound10}. From now we
  fix this choice of  $p, a_j$.

  We can now use a continuity method to show that if $\epsilon <
  \epsilon_1$, for the $\epsilon_1$ determined by $\{p, a_j\}$,  for all $s\in
  [0,s_0]$ we can solve the twisted cscK equation
  \eqref{eq:twistedcscK10}.  To see this, let us fix $\epsilon < \epsilon_1$,
  and set
  \[ S = \{ s\in [0,s_0]\, :\, 
    \text{ the equation \eqref{eq:twistedcscK10} has a solution}\}.  \]
  We have $s_0\in S$, and it follows from the implicit function theorem
  that $S$ is open. To see that it is closed, note that the twisted
  cscK metrics   for $s\in S$
  automatically satisfy the entropy bound \eqref{eq:entbound10}.
  Using the main estimates of Chen-Cheng~\cite{ChenCheng}, we find
  that the potentials of the corresponding twisted cscK metrics
  satisfy a priori $C^k$-estimates, and the metrics are bounded below
  uniformly (these estimates depend on $\epsilon$, but now $\epsilon$
  is fixed). It follows that $S$ is closed.

  It follows that for sufficiently small $\epsilon > 0$ the classes
  $[\eta_\epsilon]$ on $Y$ admit cscK metrics. The estimates required
  by Definition~\ref{defn:cscKapprox} follow from
  Proposition~\ref{prop:Zheng}. 
\end{proof}

\begin{remark}
  To conclude this section we give an example where the assumption that
$-K_Y$ is relatively nef is satisfied. Let $M$ be a smooth Fano
manifold, and suppose that $P$ is a line bunde over $M$ such that $P^r
= -K_M$ for some $r > 0$. We let $V$ denote the total space of
$P^{-1}$, with the zero section blown down to a point $o$. Suppose
that $X$ has one isolated
singularity $p$, and a neighborhood of $p$ is
isomorphic to the neighborhood of $o\in V$. In this case we can
consider a resolution $\pi : Y\to X$, obtained by blowing up the singular
point. Then
\[ K_Y = \pi^*K_X + rE, \]
where the exceptional divisor $E$ isomorphic to $M$, and is in
particular irreducible. It follows that
in this case $-K_Y$ is relatively nef (in fact relatively ample). Note
that this family of examples does not fit into the framework of
admissible singularities studied by Li-Tian-Wang~\cite{LTW21}.
\end{remark}

\section{Partial $C^0$-estimate}\label{sec:bounded}
An important result of Donaldson-Sun~\cite{DS1} is the partial
$C^0$-estimate for smooth
K\"ahler-Einstein manifolds, conjectured by Tian~\cite{Tian90}. 
More precisely, suppose that $(X, \omega_{KE})$ is
a smooth K\"ahler-Einstein manifold, with $\omega_{KE}\in c_1(L)$ for
an ample line bundle, and such that for some constant $D > 0$ we have
\begin{enumerate}
  \item non-collapsing: $\mathrm{vol}\, B_{\omega_{KE}}(p, 1) > D^{-1}$
    for a basepoint $p\in X$,
  \item bounded volume: $\mathrm{vol} (X, \omega_{KE}) < D$,
  \item bounded Ricci curvature: $\mathrm{Ric}(\omega_{KE}) = \lambda
    \omega_{KE}$ for $|\lambda| < D$.
  \end{enumerate}
For any integer $k > 0$ the density of states function $\rho_{k,
  \omega_{KE}}$ is defined by
\[ \rho_{k, \omega_{KE}}(x) = \sum_j |s_j|^2(x), \]
where the $s_j$ form an $L^2$-orthonormal basis of $H^0(X, L^k)$ in
terms of the metric induced by $k\omega_{KE}$. 
Then, by Donaldson-Sun~\cite{DS1}, there is a power $k_0 = k_0(n, D)$, and $b=b(n,D) > 0$,
depending on the dimension and 
the constant $D$, such that $\rho_{k_0, \omega_{KE}} > b$. In this
section we show the following extension of this
result to singular K\"ahler-Einstein
spaces that admit good cscK approximations.

\begin{thm}\label{thm:bounded1}
  Given $n, D > 0$ there are constants $k_0(n,D), b(n,D) > 0$ with the following
  property. Suppose that $(X,\omega_{KE})$ is a singular
  K\"ahler-Einstein variety of dimension $n$, such that $\omega_{KE}\in c_1(L)$ for a
  line bundle $L$. Assume that  $(X, \omega_{KE})$ can be approximated by cscK
  metrics, and in addition the conditions (1), (2),
  (3) above hold. Then the corresponding density of states function
  satisfies $\rho_{k, \omega_{KE}} > b$. 
\end{thm}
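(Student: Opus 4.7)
The plan is to argue by contradiction following the strategy of Donaldson-Sun~\cite{DS1}. If the conclusion fails, then there exist a sequence of singular K\"ahler-Einstein varieties $(X_i, \omega_i)$ satisfying the hypotheses with uniform constant $D$, points $x_i \in X_i$, and integers $k_i \to \infty$ such that $\rho_{k_i, \omega_i}(x_i) \to 0$. By Theorem~\ref{thm:main}, each $\hat{X}_i$ is a non-collapsed $RCD(-D, 2n)$ space, and the uniform volume and non-collapsing bounds imply (via Bishop-Gromov in the RCD setting) a uniform upper bound on the diameters of $\hat{X}_i$.

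First I would use compactness of non-collapsed RCD spaces to extract from $(\hat{X}_i, k_i d_i, x_i)$ a pointed Gromov-Hausdorff subsequential limit $(W, d_W, o)$, which is a non-collapsed $RCD(0, 2n)$ space since the Ricci lower bounds $-D$ scale by $k_i^{-1} \to 0$. Next I would invoke De Philippis-Gigli~\cite{DPG2} so that iterated tangent cones of $W$ at $o$ are metric cones. Proposition~\ref{prop:nocodim2} rules out non-flat splittings of the form $\mathbb{R}^{2n-2} \times C(S^1_\gamma)$ and, combined with the gap result Theorem~\ref{thm:gap} of Xu-Zhuang for volume densities of Ricci flat K\"ahler cones, this would force every iterated tangent cone of $W$ to have singular set of Hausdorff codimension at least $4$, and in particular of zero capacity. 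A diagonal argument then produces scales $r_i$ and points $x_i'$ with $(X_i, r_i^2 \omega_i, x_i')$ converging to such a tangent cone $C$.

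Given this, the construction of Proposition~\ref{prop:goodsection}, carried out uniformly in $i$ with $C$ as the target metric cone $(V,o)$, would produce for large $i$ a holomorphic section $s_i \in H^0(X_i, L^{m k_i})$ with $m$ bounded independently of $i$, bounded $L^2$-norm with respect to $h^{mk_i}$ and $m k_i \omega_i$, and concentration $|s_i(x_i')| \geq c - \zeta$ for some uniform $c > 0$. This yields $\rho_{m k_i, \omega_i}(x_i') \geq c'$ for a uniform $c' > 0$. Setting up the contradiction sequence so that $k_i$ runs through the multiples $m k_i''$ of the fixed integer $m$ produced by the argument would contradict $\rho_{k_i, \omega_i}(x_i) \to 0$, after using a gradient estimate for the section to transfer the concentration bound from $x_i'$ to $x_i$.

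The main obstacle is obtaining a version of Proposition~\ref{prop:goodsection} whose constants $K, \epsilon, C$ are uniform along the sequence $(X_i, \omega_i)$ rather than depending on the individual space. This requires uniform versions of the ingredients in Section~\ref{sec:DS}: uniform lower bounds for the local holomorphic defining functions of the singular set (Lemma~\ref{lem:finitevanish}), uniform $L^p$ control of the volume form via assumption (4) with uniform constants coming from the cscK approximations, and uniform $\epsilon$-regularity in the sense of Proposition~\ref{prop:almostreg31}. The uniform $L^p$ bound on the Monge-Amp\`ere density follows from the uniform entropy estimates provided by the cscK approximations together with the estimates of Guo-Phong-Song-Sturm~\cite{GPSS2}, while the remaining uniformity is obtained by passing to the GH limit and using that the limit spaces inherit the same structural properties. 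Once this uniformity is in place, the rest of the argument follows the lines of \cite{DS1} and \cite{LSz1}.
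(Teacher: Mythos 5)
Your proposal diverges from the paper's argument in an important structural way, and the divergence introduces real gaps.

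The paper does \emph{not} rescale by $k_i$. It passes to the un-rescaled pointed Gromov--Hausdorff limit $\hat{X}_i \to \hat{X}_\infty$, which is compact, and then the key novelty is how Theorem~\ref{thm:gap} is used: it supplies a \emph{dimensional} constant $\epsilon>0$ (independent of the particular $X_i$) so that the $\epsilon$-regular set of $\hat{X}_\infty$ coincides with the set of points whose tangent cone is $\mathbb{R}^{2n}$, and, combined with Proposition~\ref{prop:almostreg31}, that the convergence $\hat{X}_i \to \hat{X}_\infty$ is \emph{locally smooth} on this open regular set. Smooth convergence on the regular set is precisely what is needed to run the Donaldson--Sun covering argument and extract a uniform $k_0$. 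Your write-up uses Theorem~\ref{thm:gap} only to get codimension-4 singular sets and zero capacity; you never explicitly invoke it to get the uniform $\epsilon$ in Proposition~\ref{prop:almostreg31}, which is the step that makes everything else uniform in $i$. That is the missing idea your proposal needs.

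Your rescaled-limit approach also has two concrete problems. First, the scale mismatch: the tangent cone $C$ of $W$ is realized only after a further blow-up by factors $s_j \to \infty$, so Proposition~\ref{prop:goodsection} applied to $C$ produces sections of $L_i^{m\,s_j k_{i_j}}$, not of $L_i^{k_i}$; you cannot take roots of holomorphic sections, and the suggested fix of ``setting up $k_i$ to run through multiples of $m$'' is circular (and also does not address the factor $s_j$). Second, ``using a gradient estimate to transfer the concentration bound from $x_i'$ to $x_i$'' is not available: after the diagonal extraction the two points live at incomparable scales, and a peaked section of $L^k$ only has useful gradient control on a ball of radius comparable to $k^{-1/2}$ about its peak, which need not contain $x_i$. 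The paper avoids both issues by doing the tangent-cone analysis at points of the compact limit $\hat{X}_\infty$, at each point choosing a finite scale, and closing the argument with the standard finite-cover and common-multiple trick. So the verdict is: right ingredients (Theorem~\ref{thm:gap}, Propositions~\ref{prop:nocodim2}, \ref{prop:almostreg31}, \ref{prop:goodsection}), but assembled along a different contour that, as written, does not close.
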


The proof of the result follows the same strategy as
Donaldson-Sun~\cite{DS1}, arguing by contradiction. We suppose that
the sequence $(X_i, \omega_{KE,   i})$ satisfies the bounds (1)--(3),
but no fixed power $L_i^k$ of the corresponding line bundles is very
ample. The corresponding metric completions $\hat{X}_i$ are
non-collapsed RCD spaces by Proposition~\ref{prop:RCD},
and we can pass to the Gromov-Hausdorff
limit $\hat{X}_\infty$ along a subsequence. We would then like to use
the structure of the tangent cones of $\hat{X}_\infty$ to construct
suitable holomorphic sections of a suitable power $L_i^k$ for large
$i$, leading to a contradiction.

The difficulty in executing this strategy is that we do not have
good control of the convergence of $\hat{X}_i$ to $\hat{X}_\infty$ on
the regular set of $\hat{X}_\infty$, because in
Corollary~\ref{cor:regset} the constant $\epsilon$ depends on
the singular K\"ahler-Einstein space $X$ that we are considering. As
such it is a priori possible that the singular set of
$\hat{X}_\infty$, consisting of points where the tangent cone is not
given by $\mathbb{R}^{2n}$, is dense. In order to rule this out, we
prove the following. Note that recently this result was shown in the
more general algebraic setting by Xu-Zhuang~\cite{XZ24} (see also
Liu-Xu~\cite{LX19} for the three dimensional case).

\begin{thm}\label{thm:gap} 
  There is an $\epsilon > 0$, depending only on the dimension $n$, with
  the following property. Suppose that $\hat{X}$ is the
  metric completion of a singular K\"ahler-Einstein space as in
  Theorem~\ref{thm:homeomorphic}, i.e. one that can be 
  approximated by cscK metrics. Let $(\hat{X}_p, o)$ be a tangent cone
  of $\hat{X}$, such
  that $\hat{X}_p \not= \mathbb{R}^{2n}$. Then
  \[ \mathrm{vol} B(o, 1) < \omega_{2n} - \epsilon, \]
  where $\omega_{2n}$ is the volume of the Euclidean unit ball in
  $\mathbb{R}^{2n}$. 
\end{thm}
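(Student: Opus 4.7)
The plan is to reduce Theorem~\ref{thm:gap} to the uniform algebraic volume gap of Xu-Zhuang~\cite{XZ24} for normalized volumes of klt singularities. By Theorem~\ref{thm:homeomorphic}, the point $p \in \hat{X}$ corresponds to a point of $X$ under the homeomorphism $\hat{\Phi}_X$. If $p \in X^{reg}$, then the tangent cone is $\mathbb{R}^{2n}$ and there is nothing to prove, so I may assume $p$ is a klt singular point. The tangent cone $\hat{X}_p$ is a non-collapsed $\mathrm{RCD}(0, 2n)$ metric cone, since the lower Ricci bound $\lambda$ rescales to zero, and its regular locus inherits a Ricci-flat K\"ahler cone structure. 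By Bishop-Gromov $\mathrm{vol}\, B(o, 1) \leq \omega_{2n}$, and the content of the theorem is that this inequality is strict with a gap depending only on $n$.

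The next step is to identify $\mathrm{vol}\, B(o, 1)$ with a dimensional multiple of the algebraic normalized volume $\widehat{\mathrm{vol}}(X, p)$ in the sense of Li. First, via an Anderson-type $\epsilon$-regularity argument (parallel to the one used in the proof of Proposition~\ref{prop:almostreg1}) applied to the approximating cscK metrics $\omega_\epsilon$ rescaled around $p$, the metric regular locus of $\hat{X}_p$ carries a K\"ahler cone structure compatible with the holomorphic structure of $X$ near $p$. Second, one verifies that the metric and analytic singular sets of $\hat{X}_p$ coincide, by the same strategy used in Proposition~\ref{prop:almostreg31}: locally defining holomorphic equations have finite order of vanishing, and a three-annulus argument controls the size of their zero sets at all scales. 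These two facts together with Li's variational formula for $\widehat{\mathrm{vol}}$ (the infimum being realized by the Reeb valuation of the cone) give an identity of the form
\[
\mathrm{vol}\, B(o, 1) = c_n\, \widehat{\mathrm{vol}}(X, p),
\]
for an explicit constant $c_n > 0$ depending only on $n$.

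Finally, I invoke Xu-Zhuang's gap theorem: there exists $\delta_n > 0$ depending only on $n$ such that any non-smooth $n$-dimensional klt singularity $(X, p)$ satisfies $\widehat{\mathrm{vol}}(X, p) \leq n^n - \delta_n$. Combined with the previous identity this yields $\mathrm{vol}\, B(o, 1) \leq \omega_{2n} - \epsilon$ for $\epsilon = c_n \delta_n$, which is the desired universal gap.

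The main obstacle will be establishing the identification of metric and algebraic structures on $\hat{X}_p$ in the second paragraph. The introduction notes that the full uniqueness and algebraicity of tangent cones (in the spirit of Donaldson-Sun~\cite{DS2} and Li-Wang-Xu~\cite{LWX}) is expected to extend to the cscK-approximable setting with only minor modifications, but is not carried out here. For Theorem~\ref{thm:gap} only the weaker identification of the volume density with the normalized volume is needed, which should follow from adapting the H\"ormander $L^2$-construction of holomorphic sections and the comparison of metric and analytic singular sets from Section~\ref{sec:DS} to the tangent cone setting, after which the conclusion is a formal combination with \cite{XZ24}.
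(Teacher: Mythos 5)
Your approach is genuinely different from the paper's, and it contains a gap that you yourself flag but do not fill. You propose to reduce the statement to Xu-Zhuang's algebraic gap theorem for normalized volumes via an identification $\mathrm{vol}\,B(o,1) = c_n\,\widehat{\mathrm{vol}}(X,p)$. The paper is aware of this connection---it remarks just before the theorem that "recently this result was shown in the more general algebraic setting by Xu-Zhuang"---but deliberately avoids that route, precisely because the required identification is not available in the cscK-approximable setting. Establishing it would require extending Donaldson-Sun II and Li-Wang-Xu to this setting (algebraicity and uniqueness of tangent cones, identification of the metric cone with the object realizing the infimum in Li's normalized volume), which the introduction explicitly defers to future work. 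Your claim that "only the weaker identification of the volume density with the normalized volume is needed" and that this "should follow from adapting" the arguments of Section~4 is not supported by anything carried out there: Section~4 constructs enough holomorphic sections to prove injectivity of $\hat\Phi_X$ and to control the codimension of the singular set, but it does not produce an algebraic structure on the tangent cone, nor the $2$-step degeneration and valuation-theoretic input needed to invoke $\widehat{\mathrm{vol}}$. So this is a genuine missing idea, not a routine adaptation.

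The paper's actual proof is a self-contained Riemannian argument with no algebraic geometry. Arguing by contradiction, it isolates a Condition~$(\ast)$ on almost smooth metric measure cones (essentially: the $\epsilon$-regular set is the smooth Ricci-flat locus, and no $C(S^1_\gamma)\times\mathbb{R}^{2n-2}$ tangent cone with $\gamma<2\pi$), verified for iterated tangent cones of $\hat X$ via Propositions~\ref{prop:almostreg31} and \ref{prop:nocodim2}, and inherited by splittings and further tangent cones. It then runs an induction on dimension: a sequence of such $k$-cones converging to $\mathbb{R}^k$ either has smooth links---and then smooth convergence of Einstein metrics $\mathrm{Ric}(h_j)=(k-2)h_j$ to the round sphere, criticality of the Einstein-Hilbert functional, and Bishop-Gromov rigidity force each link to be round---or has off-vertex singularities $q_j$, whose tangent cones split off an $\mathbb{R}$-factor and whose cross-sections $W_j$ are $(k-1)$-cones satisfying Condition~$(\ast)$ converging to $\mathbb{R}^{k-1}$, handled by the inductive hypothesis. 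What this buys over your route is that it never needs the metric-to-algebraic dictionary for tangent cones; it needs only the comparison of metric and analytic regular sets and the codimension-$2$ cone statement already established in Section~4. Your route, if the identification were available, would give the sharper quantitative gap $\epsilon = c_n\delta_n$ directly from the local volume bound of Xu-Zhuang, but as written it rests on machinery the paper does not (and explicitly declines to) develop.
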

\begin{proof}
  We will argue by contradiction. If the stated result is not true,
  then we can find a sequence $\hat{X}_i$, and a sequence of singular
  points $p_i\in \hat{X}_i$ with tangent cones $V_{p_i}$ such that
  $V_{p_i}\to \mathbb{R}^{2n}$ in the pointed Gromov-Hausdorff sense.

  We will prove a more general statement about almost smooth metric
  measure spaces in the sense of Definition~\ref{defn:almostsmooth},
  of any dimension, which satisfy the following conditions.

  \begin{definition} We say that an almost smooth metric measure space $V$
    satisfies Condition ($\ast$) if the following conditions hold: 
  \begin{enumerate}
  \item For some $\epsilon > 0$ (possibly depending on $V$), the
    $\epsilon$-regular set $\mathcal{R}_\epsilon \subset V$, defined
    by \eqref{eq:epsreg}, can be chosen to be the set $\Omega$ in
    Definition~\ref{defn:almostsmooth}.
   \item The Riemannian metric on $\Omega$ is Ricci flat. 
  \item If a tangent cone $V'$ of $V$ is of the form
    $C(S^1_\gamma)\times \mathbb{R}^{2n-2}$, then $V' =
    \mathbb{R}^{2n}$. 
  \end{enumerate}
\end{definition}

  Note that by Propositions~\ref{prop:almostreg31} and
  \ref{prop:nocodim2}, the (iterated) tangent cones of the spaces $\hat{X}_i$
  satisfy Condition ($\ast$). Moreover, if a space $V = W \times
  \mathbb{R}^j$ satisfies Condition ($\ast$), then so does $W$, and so
  do the tangent cones of $V$. 

  We argue by induction on the dimension to
  show that if a sequence of $k$-dimensional cones $V_j$ satisfies Condition ($\ast$), and $V_j
  \to \mathbb{R}^k$ in the pointed Gromov-Hausdorff sense, then $V_j =
  \mathbb{R}^k$ for sufficiently large $i$. For $k=2$ this follows directly
  from Condition ($\ast$).

  Assuming $k > 2$, 
  suppose first that for all sufficiently large $j$ the cones $V_j$
  have smooth link (i.e. the singular set consists of only the
  vertex). In this case $V_j = C(Y_j)$, where the $(Y_j, h_j)$ are
  $(k-1)$-dimensional smooth Einstein manifolds satisfying
  $\mathrm{Ric}(h_j) = (k-2)h_j$. Moreover the $(Y_j, h_j)$ converge
  in the Gromov-Hausdorff sense to the unit $(k-1)$-sphere.
  As long as $k-1 > 1$, it follows that for sufficiently large $j$ we have
  $\mathrm{vol}(Y_j, h_j) = \mathrm{vol}(S^{k-1}, g_{S^{k-1}})$, using
  that Einstein metrics are critical points of the Einstein-Hilbert
  action. The Bishop-Gromov comparison theorem then implies that in
  fact $(Y_j, h_j)$ is isometric to the unit $(k-1)$-sphere for
  sufficiently large $j$, so that $V_j = \mathbb{R}^{k}$. If $k-1=1$,
  then $V_j$ is a cone over a circle, so by 
  Condition ($\ast$) we have $V_j = \mathbb{R}^2$. Either way we have
  a contradiction. 

  We can therefore assume, up to choosing a subsequence, that the $V_j$
  all have singularities $q_j$ away from the vertex. By taking tangent
  cones at the $q_j$, we obtain a new sequence of cones, $V_j'$, which
  still satisfy the Condition ($\ast$), they converge to
  $\mathbb{R}^k$, and they all split off an isometric factor of
  $\mathbb{R}$, i.e. $V_j' = W_j \times \mathbb{R}$. The cones $W_j$
  are then $k-1$ dimensional, they also satisfy Condition ($\ast$),
  and $W_j \to \mathbb{R}^{k-1}$. We can then apply the inductive
  hypothesis. It follows that $W_j = \mathbb{R}^{k-1}$ for large $j$,
  so $V_j'= \mathbb{R}^k$, contradicting that the $q_j$ are singular
  points. 
\end{proof}

Given this result, we can follow the argument of
Donaldson-Sun~\cite{DS1} to prove Theorem~\ref{thm:bounded1}.
\begin{proof}[Proof of Theorem~\ref{thm:bounded1}]
  We argue by contradiction. Suppose that there are singular
  K\"ahler-Einstein spaces $(X_i, \omega_{KE, i})$, that can be
  approximated by cscK metrics, with
  $\omega_{KE,i}\in c_1(L_i)$, satisfying the
  conditions (1)--(3) before the statement of
  Theorem~\ref{thm:bounded1}, but such that there is no fixed power
  $L_i^k$ of  the line bundles $L_i$ whose density of states functions
  are bounded away from zero uniformly.
  Up to choosing a subsequence, we can assume that the
  corresponding RCD spaces $\hat{X}_i$ converge to $\hat{X}_\infty$ in
  the Gromov-Hausdorff sense. Theorem~\ref{thm:gap} implies
  that for some $\epsilon > 0$, the $\epsilon$-regular subset of
  $\hat{X}_\infty$ coincides with the regular set $\mathcal{R}\subset
  \hat{X}_\infty$ (given by the points with tangent cone
  $\mathbb{R}^{2n}$). Therefore the set $\mathcal{R}$ is open, and by
  Theorem~\ref{thm:gap} together with
  Proposition~\ref{prop:almostreg31}, it follows that 
  the convergence $\hat{X}_i\to\hat{X}_\infty$
  is locally smooth on $\mathcal{R}$. In addition, using the argument
  in Proposition~\ref{prop:nocodim2}, we know that no iterated tangent
  cone of $\hat{X}_\infty$ is given by
  $C(S^1_\gamma)\times\mathbb{R}^{2n-2}$ with $\gamma < 2\pi$. This
  means that we are in essentially the same setting as
  Donaldson-Sun~\cite{DS1}, and can closely follow their arguments
  to show that there is a  $k_0 > 0$, such that the density of
  states functions of the sections of $L_i^{k_0}$ are bounded away from
  zero for all sufficiently large  $i$. 
\end{proof}


\begin{thebibliography}{11}

\bibitem{AGS}
  Ambrosio, L., Gigli, N., Savar\'e, G. \emph{Bakry-\'Emery
    curvature-dimension condition and Riemannian Ricci curvature
    bounds},
  Ann. Probab. {\bf 43} (2015), no. 1, 339--404.

\bibitem{And}
  Anderson, M. T. \emph{Convergence and rigidity of manifolds under
    Ricci curvature bounds},
  Invent. Math. {\bf 102} (1990), no. 2, 429--445.

\bibitem{Ber16}
  Berman, R. \emph{K-polystability of Q-Fano varieties admitting
    K\"ahler-Einstein metrics},
  Invent. Math. {\bf 203} (2016), no. 3, 9773--1025.

\bibitem{BBEGZ}
Berman, R., Boucksom, S., Eyssidieux, P., Guedj, V., Zeriahi, A.
\emph{K\"ahler-Einstein metrics and the K\"ahler-Ricci flow on log Fano varieties},
J. Reine Angew. Math. {\bf 751} (2019), 27--89.

\bibitem{BEGZ}
  Berman, R., Boucksom, S., Guedj, V., Zeriahi, A. \emph{A variational
    approach to complex Monge-Ampère equations},
  Publ. Math. Inst. Hautes Études Sci. {\bf 117} (2013), 179--245.


\bibitem{BJT24}
  Boucksom, S., Jonsson, M., Trusiani, A. \emph{Weighted extremal
    K\"ahler metrics on resolutions of singularities}, arXiv:2412.06096.
  
\bibitem{Bran}
  Branson, T. \emph{Kato constants in Riemannian geometry},
  Math. Res. Lett. {\bf 7} (2000), no.2--3, 245--261.
  
\bibitem{BNS}
  Bru\`e, E., Naber, A., Semola, D. \emph{Boundary regularity and
    stability for spaces with Ricci bounded below}
  Invent. Math. {\bf 228} (2022), no.2, 777--891. 

\bibitem{CF}
  Caffarelli, L. A., Friedman, A. \emph{Partial regularity of the
    zero-set of solutions of linear and superlinear elliptic
    equations},
  J. Differential Equations {\bf 60} (1985), no. 3, 420--433.
  
\bibitem{Calabi}
  Calabi, E. \emph{Extremal K\"ahler metrics},
  Seminar on Differential Geometry, Ann. of Math. Stud., no. 102, 259--290.

\bibitem{CGH}
  Calderbank, D. M. J., Gauduchon, P., Herzlich, M. \emph{Refined Kato
    inequalities and conformal weights in Riemannian geometry},
  J. Funct. Anal. {\bf 173} (2000), no. 1, 214--255.
  
\bibitem{CC1}
  Cheeger, J., Colding, T. H. \emph{On the structure of spaces with
    Ricci curvature bounded below. I},
  J. Differential Geom. {\bf 46} (1997), no. 3, 406--480.

\bibitem{CCT}
  Cheeger, J., Colding, T. H., Tian, G. \emph{On the singularities of
    spaces with bounded Ricci curvature},
  Geom. Funct. Anal. {\bf 12} (2002), no. 5, 873--914.
  
\bibitem{CJN}
  Cheeger, J., Jiang, W., Naber, A. \emph{Rectifiability of singular
    sets of noncollapsed limit spaces with Ricci curvature bounded
    below}, Ann. of Math. (2) {\bf 193} (2021), no.2, 407--538.

\bibitem{ChenCheng}
  Chen, X., Cheng, J. \emph{On the constant scalar curvature K\"ahler
    metrics (I)—A priori estimates}, 
J. Amer. Math. Soc. {\bf 34} (2021), no. 4, 909--936.
  
   \bibitem{CDS2}
  Chen, X., Donaldson, S., Sun, S. \emph{K\"ahler-Einstein metrics on
    Fano manifolds. II: Limits with cone angle less than $2\pi$},
  J. Amer. Math. Soc. {\bf 28} (2015), no. 1, 199--234.
  
\bibitem{CDS3}
  Chen, X., Donaldson, S., Sun, S. \emph{K\"ahler-Einstein metrics on
    Fano manifolds. III: Limits as cone angle approaches  $2\pi$  and
    completion of the main proof},
  J. Amer. Math. Soc. {\bf 28} (2015), no. 1, 235--278.

\bibitem{CSz23}
  Chiu, S.-K., Sz\'ekelyhidi, G. \emph{Higher regularity for singular
    K\"ahler-Einstein metrics},
  Duke Math. J. {\bf 172} (2023), no. 18, 3521--3558.

\bibitem{CJ20}
  Chu, J., Jiang, W. \emph{A note on nodal sets on manifolds with
    lower Ricci bounds},
  Methods Appl. Anal. {\bf 27} (2020), no. 4, 359--374.

\bibitem{CZ}
  Cibotaru, D., Zhu, P. \emph{Refined Kato inequalities for harmonic
    fields on K\"ahler manifolds},
  Pacific J. Math. {\bf 256} (2012), no. 1, 51--66.


\bibitem{DarvasMetric}
  Darvas, T. \emph{Metric geometry of normal Kähler spaces, energy
    properness, and existence of canonical metrics},
  Int. Math. Res. Not. IMRN (2017), no. 22, 6752--6777.
  

\bibitem{DPG2}
  De Philippis, G., Gigli, N. \emph{From volume cone to metric cone in
    the nonsmooth setting},
  Geom. Funct. Anal. {\bf 26} (2016), no.6, 1526--1587.
  
\bibitem{DPG}
  De Philippis, G., Gigli, N. \emph{Non-collapsed spaces with Ricci
    curvature bounded from below}, J. \'Ec. polytech. Math. {\bf 5}
  (2018), 613--650.

\bibitem{Dem82}
  Demailly, J.-P. \emph{Estimations  L2  pour l'op\'erateur
    $\bar\partial$  d'un fibr\'e vectoriel holomorphe semi-positif
    au-dessus d'une vari\'et\'e k\"ahl\'erienne compl\`ete},
  Ann. Sci. École Norm. Sup. (4) {\bf 15} (1982), no. 3, 457--511.
  
\bibitem{Demailly}
  Demailly, J.-P. \emph{Complex Analytic and Differential Geometry}.

\bibitem{Ding}
  Ding, Y. \emph{The gradient of certain harmonic functions on
    manifolds of almost nonnegative Ricci curvature},
  Israel J. Math. {\bf 129} (2002), 241--251.
  

\bibitem{DS1}
  Donaldson, S., Sun, S. \emph{Gromov-Hausdorff limits of Kähler
    manifolds and algebraic geometry}, 
  Acta Math. {\bf 213} (2014), no. 1, 63--106.
  
\bibitem{DS2}
  Donaldson, S., Sun, S. \emph{Gromov-Hausdorff limits of Kähler
    manifolds and algebraic geometry, II}, 
J. Differential Geom. {\bf 107} (2017), no. 2, 327--371.

\bibitem{EGZ}
  Eyssidieux, P., Guedj, V., Zeriahi, A. \emph{Singular
    Kähler-Einstein metrics},
  J. Amer. Math. Soc. {\bf 22} (2009), no.3, 607--639.

  
\bibitem{GGK19}
  Greb, D., Guenancia, H., Kebekus, S. \emph{Klt varieties with
    trivial canonical class: holonomy, differential forms, and
    fundamental groups},
  Geom. Topol. {\bf 23} (2019), no.4, 2051--2124.

  
\bibitem{GGZ}
  Guedj, V., Guenancia, H., Zeriahi, A. \emph{Strict positivity of
    K\"ahler-Einstein currents}, arXiv:2305.12422.

\bibitem{GPSS1}
  Guo, B., Phong, D. H., Song, J., Sturm, J. \emph{Diameter estimates
    in K\"ahler geometry},
  arXiv:2209.09428

\bibitem{GPSS2}
  Guo, B., Phong, D. H., Song, J., Sturm, J. \emph{Sobolev
    inequalities on K\"ahler spaces},
  arXiv:2311.00221

  
\bibitem{HL}
  Han, Q., Lin, F.-H. \emph{On the geometric measure of nodal sets of
    solutions},
  J. Partial Differential Equations {\bf 7} (1994), no. 2, 111--131.

\bibitem{HS}
  Hein, H.-J., Sun, S. \emph{Calabi-Yau manifolds with isolated
    conical singularities},
  Publ. Math.  Inst. Hautes Études Sci. {\bf 126} (2017), 73--130.

  
\bibitem{Honda}
  Honda, S. \emph{Bakry-\'Emery conditions on almost smooth metric
    measure spaces},
  Anal. Geom. Metr. Spaces {\bf 6} (2018), no. 1, 129--145.


\bibitem{Jiang14}
  Jiang, R. \emph{Cheeger-harmonic functions in metric measure spaces
    revisited},
  J. Funct. Anal. {\bf 266} (2014), no.3, 1373--1394.

\bibitem{JLZ}
  Jiang, R., Li, H., Zhang, H. \emph{Heat kernel bounds on metric
    measure spaces and some applications},
  Potential Anal. {\bf 44} (2016), no.3, 601--627.
  

\bibitem{LNTZ}
  La Nave, G., Tian, G., Zhang, Z. \emph{Bounding diameter of singular
    K\"ahler metric},
  Amer. J. Math. {\bf 139} (2017), no. 6, 1693--1731.

\bibitem{LTW21}
  Li, C., Tian, G., Wang, F. \emph{On the Yau-Tian-Donaldson
    conjecture for singular Fano varieties},
  Comm. Pure Appl. Math. {\bf 74} (2021), no. 8, 1748--1800.

\bibitem{LWX}
  Li, C., Wang, X., Xu, C. \emph{Algebraicity of metric tangent cones
    and equivariant K-stability},
  J. Amer. Math. Soc. {\bf 34} (2021), no. 4, 1175--1214.
  
\bibitem{LSz1}
  Liu, G., Sz\'ekelyhidi, G. \emph{Gromov-Hausdorff limits of K\"ahler
    manifolds with Ricci curvature bounded below},
  Geom. Funct. Anal. {\bf 32} (2022), no. 2, 236--279.

\bibitem{LX19}
  Liu, Y., Xu, C. \emph{K-stability of cubic threefolds},
  Duke Math. J. {\bf 168} (2019), no. 11, 2029--2073.
  
\bibitem{LXZ22}
  Liu, Y., Xu, C., Zhuang, Z. \emph{Finite generation for valuations
    computing stability thresholds and applications to K-stability},
  Ann. of Math. (2) {\bf 196} (2022), no. 2, 507--566.

\bibitem{LV09}
  Lott, J., Villani, C. \emph{Ricci curvature for metric-measure
    spaces via optimal transport}, Ann. of Math. (2) {\bf 169} (2009), 903--991.

\bibitem{JMR}
  Jeffres, T., Mazzeo, R., Rubinstein, Y. A. \emph{K\"ahler-Einstein
    metrics with edge singularities},
  Ann. of Math. (2) {\bf 183} (2016), no. 1, 95--176.

\bibitem{PT24}
  Pan, C.-M., T\^o, T. D. \emph{Singular weighted cscK metrics on
    K\"ahler varieties}, preprint 2024.
  
\bibitem{PSSW}
  Phong, D. H., Song, J., Sturm, J., Weinkove, B. \emph{The
    Moser-Trudinger inequality on K\"ahler-Einstein manifolds},
  Amer. J. Math. {\bf 130} (2008), no.4, 1067--1085.
  
\bibitem{Song14}
  Song, J. \emph{Riemannian geometry of K\"ahler-Einstein currents},
  arXiv:1404.0445

  

\bibitem{SW08}
  Song, J., Weinkove, B. \emph{On the convergence and singularities of
    the  J -flow with applications to the Mabuchi energy},
  Comm. Pure Appl. Math. {\bf 61} (2008), no. 2, 210--229.
  
\bibitem{Spotti}
  Spotti, C. \emph{K\"ahler-Einstein metrics on Q-smoothable Fano
    varieties, their moduli and some applications},
  Springer INdAM Ser., 21, 
  Springer, Cham, 2017, 211--229.

\bibitem{SturmPrivate}
  Sturm, J. \emph{Unpublished notes}.

\bibitem{SturmKT}
  Sturm, K.-T. \emph{On the geometry of metric measure spaces. I.},
  Acta Math. {\bf 196} (2006), no. 1, 65--131.

\bibitem{Tianalpha}
  Tian, G. \emph{On K\"ahler-Einstein metrics on certain K\"ahler
    manifolds with $C_1(M)>0$},
  Invent. Math. {\bf 89} (1987), no.2, 225--246.

\bibitem{Tian90}
  Tian, G. \emph{K\"ahler-Einstein metrics on algebraic manifolds}, 
  Proceedings of the International Congress of Mathematicians (Kyoto,
  1990), Vol. I, 587--598. 
  
\bibitem{Tian97}
  Tian, G. \emph{K\"ahler-Einstein metrics with positive scalar
    curvature},
  Invent. Math. {\bf 130} (1997), no.1, 1--37.

\bibitem{Tianbook}
  Tian, G. \emph{Canonical metrics in K\"ahler geometry}, 
  Lectures Math. ETH Zürich,
  Birkh\"auser Verlag, Basel, 2000. vi+101 pp.
 
\bibitem{XZ24}
  Xu, C., Zhuang, Z. \emph{Boundedness of log Fano cone singularities
    and discreteness of local volumes},
  arXiv:2404.17134
  
\bibitem{Yau77}
  Yau, S.-T. \emph{Calabi's conjecture and some new results in
    algebraic geometry},
  Proc. Nat. Acad. Sci. U.S.A. {\bf 74} (1977), no. 5, 1798--1799.

\bibitem{Yau78}
  Yau, S.-T. \emph{On the Ricci curvature of a compact K\"ahler manifold and the complex Monge-Amp\`ere equation. I},
  Comm. Pure Appl. Math. {\bf 31} (1978), no. 3, 339--411.


\bibitem{Zheng20}
  Zheng, K. \emph{Existence of constant scalar curvature K\"ahler cone
    metrics, properness and geodesic stability}, arXiv:1803.09506
  
\bibitem{Zheng}
  Zheng, K. \emph{Singular scalar curvature equations},
  arXiv:2205.14726.

\end{thebibliography}
\end{document}